\newcommand{\Sl}{\mathfrak{sl}_2}
\newcommand{{\N}}{{\rm N}}
\newcommand{{\M}}{{\rm M}}
\newcommand{\f}{\mathbb{F}}
\newcommand{\ad}[1]{\mathsf{ad}_{#1}}
\newcommand{\0}{\mathbf{0}}
\theoremstyle{definition}
\newtheorem{Theorem}{Theorem}[section]
\newtheorem{Corollary}[Theorem]{Corollary}
\newtheorem{Lemma}[Theorem]{Lemma}
\newtheorem{Definition}[Theorem]{Definition}
\newtheorem{Example}[Theorem]{Example}
\newtheorem{Remark}[Theorem]{Remark}
\newtheorem{Notation}[Theorem]{Notation}
\newtheorem{Acknowledgement}[Theorem]{Acknowledgement}
\numberwithin{equation}{section}
\newcounter{IssueCounter}
\newtheorem{Issue}[IssueCounter]{Issue}
\newcommand{\issue}[2]{
\begin{Issue}[\textcolor{red}{#1}]{\textcolor{blue}{#2}}\end{Issue}}
\newcommand{\tr}[1]{\mathrm{tr}(#1)}
\def\be{\begin{equation}}
\def\ee{\end{equation}}
\def\ba{\begin{eqnarray}}
\def\ea{\end{eqnarray}}
\def\bpr{\begin{proof}}
\def\epr{\end{proof}}
\def\bes{\begin{equation*}}
\def\ees{\end{equation*}}
\def\bas{\begin{eqnarray*}}
\def\eas{\end{eqnarray*}}
\def\im{\mathrm{im}\,}
\begin{document}
\bibliographystyle{alpha}
\baselineskip=18pt
\pagestyle{empty}
\tikzstyle{dynkinnode}=[draw, color=cyan, shape=circle,minimum size=3.5 pt,inner sep=0]
\tikzstyle{udynkinnode}=[draw, color=green, shape=circle,minimum size=3.5 pt,inner sep=0]

\tikzstyle{ldynkinnode}=[draw, color=cyan, shape=circle,minimum size=3.5 pt,inner sep=0]

\tikzstyle{sdynkinnode}=[draw, color=red, shape=circle,minimum size=3.5 pt,inner sep=0]

\tikzstyle{fdynkinnode}=[draw, color=cyan, shape=circle,minimum size=3.5 pt,inner sep=0,fill=black]

\tikzstyle{fldynkinnode}=[draw, color=cyan, shape=circle,minimum size=3.5 pt,inner sep=0,fill=black]

\tikzstyle{fsdynkinnode}=[draw, color=red, shape=circle,minimum size=3.5 pt,inner sep=0,fill=black]

\title{The Lie algebraic structure of  linear colored network dynamics}

\author{
Fahimeh Mokhtari\\Jan A. Sanders \\
Department of Mathematics, Faculty of Sciences\\
Vrije Universiteit, De Boelelaan 1111, 1081 HV Amsterdam, The Netherlands
}

\maketitle

\pagestyle{plain}
\vspace{-0.10in}

\baselineskip=15pt

\:\:\:\:\ \ \rule{5.88in}{0.012in}
\def\art{1}
\def\aa{a}
\def\bb{b}
\def\cc{c}
\def\dd{o}
\def\int{30}

\def\aa{a}
\def\bb{b}
\def\cc{c}
\newcommand{\FA}[2]{\mathfrak{{\aa}}^{#2}_{#1}} 
\newcommand{\GA}[2]{{{\aa}}^{#2}_{#1}} 
\newcommand{\FAb}[2]{\mathfrak{\bar{\aa}}^{#2}_{#1}} 
\newcommand{\sFA}[2]{\mathfrak{{\aa}}^{}_{}}    
\newcommand{\ssFA}[2]{\mathfrak{\bar{\aa}}^{}_{}}    
\newcommand{\cFA}[2]{\mathsf{{\aa}}^{#1}_{#2}}  
\newcommand{\FB}[2]{\mathfrak{{\bb}}^{#2}_{#1}}
\newcommand{\GB}[2]{{{\bb}}^{#2}_{#1}}
\newcommand{\FBb}[2]{\mathfrak{\bar{\bb}}^{#2}_{#1}}
\newcommand{\sFB}[2]{\mathfrak{{\bb}}^{}_{}}
\newcommand{\ssFB}[2]{\mathfrak{\bar{\bb}}^{}_{}}
\newcommand{\cFB}[2]{\mathsf{{\bb}}^{#1}_{#2}}
\newcommand{\FC}[2]{\mathfrak{{\cc}}^{#2}_{#1}}
\newcommand{\GC}[2]{{{\cc}}^{#2}_{#1}}
\newcommand{\FCb}[2]{\mathfrak{\bar{\cc}}^{#2}_{#1}}
\newcommand{\sFC}[2]{\mathfrak{{\cc}}^{}_{}}
\newcommand{\ssFC}[2]{\mathfrak{\bar{\cc}}^{}_{}}
\newcommand{\cFC}[2]{\mathsf{{\cc}}^{#1}_{#2}}
\newcommand{\FO}[2]{\mathfrak{{\dd}}^{#2}_{#1}}
\newcommand{\GO}[2]{{{\dd}}^{#2}_{#1}}
\newcommand{\sFO}[2]{\mathfrak{{\dd}}^{}_{}}
\def\BB{{\expandafter\MakeUppercase\expandafter{\bb}}}
\def\CC{{\expandafter\MakeUppercase\expandafter{\cc}}}
\newcommand{\cFH}[2]{\bar{\mathsf{{\cc}}}^{#1}_{#2}}
\def\cBB{\mathcal{\BB}}
\def\cCC{\mathcal{\CC}}
\def\dBB{\mathsf{\BB}}
\def\dCC{\mathsf{\CC}}
\def\dI{\mathsf{I}}
\def\dX{\mathsf{X}}
\def\dim{\mathrm{dim}\ }
\newcommand{\cu}[1]{\mathsf{u}_{#1}}
\newcommand{\bu}[1]{\mathsf{v}_{#1}}
\newcommand{\cU}[1]{\mathsf{u}^{#1}}
\newcommand{\bU}[1]{\mathsf{v}^{#1}}
\newcommand{\cx}[1]{\mathsf{x}_{#1}}
\newcommand{\dcx}[1]{\dot{\mathsf{x}}_{#1}}
\newcommand{\dbx}[1]{\dot{\mathsf{y}}_{#1}}
\newcommand{\bx}[1]{\mathsf{y}_{#1}}
\newcommand{\cX}[1]{\mathsf{x}^{#1}}
\newcommand{\bX}[1]{\mathsf{y}^{#1}}
\newcommand{\cp}[1]{\mathsf{e}_{#1}}
\newcommand{\cP}[1]{\mathsf{E}^{#1}}
\newcommand{\bp}[1]{\mathsf{f}_{#1}}
\newcommand{\bP}[1]{\mathsf{F}^{#1}}
\newcommand{\lu}[2]{\underline{\mathbf{u}}^{#2}_{#1}}
\usetikzlibrary{calc}
\newcommand{\E}[2]{\mu^{#2}_{#1}}
\newcommand{\F}[2]{\nu^{#1}_{#2}}
\newcommand\ev[1]{\mathsf{V}_{#1}}
\newcommand\dvv[2]{\dot{\mathbf{u}}_{#1}^{#2}}
\newcommand\vv[2]{\mathbf{u}_{#1}^{#2}}
\newcommand\VV[2]{\bar{\mathbf{u}}^{#1}_{#2}}
\newcommand\xx[2]{\mathbf{x}_{#1}^{#2}}
\newcommand\XX[2]{\bar{\mathbf{x}}^{#1}_{#2}}
\newcommand\eV[1]{\mathsf{V}^{\star}_{#1}}
\newcommand\Dx[3]{{\color{c#1}#2}_{\color{co#3}#1}}
\newcommand\Ux[2]{{\color{cc#1}#2}^{\color{cc#1}#1}}

\maketitle
\noindent 
\providecommand{\keywords}[1]
{
	\small	
	\textbf{\textit{Keywords---}} #1
}

\begin{abstract}

This paper explores the category of colored network dynamical systems, a class of network systems characterized by specific structural features. In these networks, components of the same color share identical functions within the differential equations governing their dynamics. This models systems with layers of identical components, but different inputs, grouped by color.

Our primary objective is to explore the linear structure inherent in these network dynamical systems, with the ultimate goal
of the computation of their normal form to facilitate the local study of their dynamics. This is important to us if the organizing center
is nonsemisimple. If it is semisimple, the standard normal form theory applies without problem.
To compute the normal form of a colored network vector field, we employ the semigroup(oid) approach introduced in \cite{rink2015coupled,MR3071397}. We aim to elucidate the structure of the Lie algebra of linear colored network vector fields.

We present a concrete algorithm for deriving the Levi decomposition
of the Lie algebra \(\mathfrak{net}_{\CC,N}\) of all \(N\)-dimensional linear vector fields of \(N\) dimensional with \(\CC\) colors (representing distinct functions describing various types of cells/nodes/components in the network). It has a Levi subalgebra isomorphic to the direct sum of  
\(\mathfrak{sl}_{\CC}\) and \(\mathfrak{sl}_{\BB}\) (with \(\BB=N-\CC\)). Moreover, the solvable part consists of elements representing the identity \(\dCC\) in \(\FC{}{}\simeq\mathfrak{gl}_{\CC}\) 
and \(\dBB\) in \(\sFB{}{}\simeq\mathfrak{gl}_{\BB}\), along with an abelian algebra \(\sFA{}{}\simeq\mathfrak{Gr}(\CC,N)\) (the Grassmannian). 
We give an algorithm to explicitly write any given colored matrix in the corresponding block form.
If the nilpotent part of the linear organizing center of a given system lies in the Levi subalgebra,
standard normal form theory applies. If not, 
this creates a new type of problem, which we intend to study in the near future. 

As a byproduct, we show in several examples how our approach delivers the {\em multipliers} determining the spectrum,
and this will be very useful in the bifurcation analysis of critical situations and their control.
In this paper, we will take a rather pragmatic view of these multipliers, in the sense that we make sure that they deliver the eigenvalues
when questioned, but maybe bigger than what is common from the representation point of view. 
This approach is motivated by the consideration that although our examples are rather low dimensional,
in practice, the problems may well be high dimensional, and it pays off to minimize the amount of computing.

\end{abstract}

\keywords{Colored network,   Lie algebra, Levi decomposition, Grassmannian,  Normal form.}

\section{Introduction}

In science and technology, many systems can be represented as networks comprising interconnected nodes. Examples of such systems span various domains, including biology (metabolic systems), economics, sociology, computer science, information theory, neuroscience, electronic engineering (power grid), and
ecology (food chains). These networks are frequently described by sets of coupled non-linear differential equations, forming a network dynamical system However, the network structure often introduces specific conditions in the differential equations, such as feedforward connections, identical nodes, or symmetry, which invalidate conventional methods. 

From a dynamical system perspective, the objective is to develop approaches that analyze dynamics in critical scenarios while preserving the network structure and capturing emerging phenomena within the normalized model equations. 

This paper focuses on the preparation of the linear part of a colored network for study in normal form. These networks consist of subgroups with identical nodes but potentially different inputs \cite{wu2002synchronization,sevilla2015enhancing,boccaletti2014structure,in2006complex}. See below an example of a colored network with five nodes and one color. 
\def\NN{5}
	\begin{Notation}\label{not:pic}
		Two kinds of colors play a role in the pictures: the colors of the cells in the network (that is, in the differential equation, the \(i\) in \(f^i\), which we will call \(\cc\)-colors, and the colors of the arrows, determined by the position in each \(f^i\). These we call \(\sigma\)-colors. These colors are only needed to make sure that one can recover the differential equation (up to the permutation of the arguments). This is slightly confusing since the arrows also have a \(\cc\)-color,
  determined by the color of their origin. But this color is clear from the picture and does not discern between the positions.
		In our pictures of the networks, we use the following conventions. 
		The big balls are the cells (or nodes), with \(f^1\) red-, \(f^2\) violet- and \(f^3\) blue-colored, 
		where the \(\cc\)-colors are of course almost completely arbitrary and chosen for their suitability to convey the following information. 
		The small balls \(\tikz \fill[red] (1ex, 1ex) circle (1ex);\) denote the exit point of an arrow, in this case from a red cell, and the \(\sigma\)-color of the arrow
		indicates the position in \(f^i\), while the thickness of the arrow is determined by the \(\cc\)-color \(i\):
		 \(\tikz \draw[violet,line width=1pt] (20pt,0pt) to [out=0,in=180] node [sloped] {$>$} (0pt,6pt);\),
		 \(\tikz \draw[violet,line width=1.5pt] (20pt,0pt) to [out=0,in=180] node [sloped] {$>$} (0pt,6pt);\),
		 \(\tikz \draw[violet,line width=2pt] (20pt,0pt) to [out=0,in=180] node [sloped] {$>$} (0pt,6pt);\),
		the \(\tikz \fill[red!\int] (1ex,1ex) circle (1ex);\) denote the arrival point of the arrow
		and the \(\tikz\draw[blue,line width=1pt] (0,0) to[loop below] node [sloped] {$$} (0,0);\) denotes a blue arrow going out and back in again (selfinteraction).
		The differential equations and the picture are completely equivalent if the order of the \(\sigma\)-colors is known.
	\end{Notation}

In this paper, our objective is to investigate the colored network dynamical system by elucidating its linear structure. 
Apart from giving insight into the structure of linear differential equations, this effort represents a crucial step toward the computation and classification of the normal form of the nonlinear terms near equilibrium and the local study of bifurcation phenomena. Before exploring the details in the Introduction, we provide some preliminaries on the normal form theory and discuss the challenges it presents.

Normal form theory is a fundamental concept in the study of vector fields and dynamical systems, generalizing the concepts of Jordan normal form and rational normal form in the linear theory to the nonlinear case.
It simplifies complex systems by locally transforming them into a more manageable form through coordinate changes. This normalization process facilitates the analysis of stability, bifurcations, and other key properties moving in the process
parameters that play no essential role in a higher order. 
It also provides insights into the underlying structure of dynamical systems, making it a powerful tool for understanding their behavior.

Normal form theory, especially when there is linear nilpotency involved,
 relies on some nontrivial Lie algebra results, such as Chevalley decomposition, splitting an element in its semisimple and nilpotent part, and the Jacobson-Morozov theorem, which extends a nilpotent to an \(\Sl\). This last result, in turn, relies on the underlying Lie algebra of linear vector fields being reductive, that is, it should be the direct sum of
 its semisimple part and its center. Notice that a semisimple element does not necessarily have to be in the semisimple component (the identity matrix in \(\mathfrak{gl}_N\) is the standard example), nor does an element in the semisimple component need to be semisimple.
Confusing as this may be at first, the two usages of the term semisimple come together in the Cartan subalgebra of a semisimple Lie algebra,
 which consists of commuting semisimple elements.


This paper started as an attempt to find out how much of ordinary normal form theory could still
be used in the context of colored networks. Normal form theory was the original motivation for the semigroupoid approach in \cite{rink2015coupled}, but computing normal forms for concrete problems turned out
not to be so easy. Since the structure of the Lie algebra was not known to us initially, it was difficult to check the reductiveness, in general. 

\begin{Notation}
		We let, for a given equilibrium \(\mathbf{x}_0\),
	\bas
		f^i_j=\frac{\partial f^i}{\partial x_j}(\mathbf{x}_0),\quad i=1,\ldots,\CC.
	\eas
\end{Notation} 
\begin{Remark}\label{rem:vector}
This notation implicitly assumes that \(f^i\) takes its values in \(\mathbb{R}\) and, as a consequence, the \(f^i_j\) are commuting. 
In \S\ref{sec:2osc} we give an example 
where the \(f^i\) take their values in \(\mathbb{R}^2\) and \(f^i_j\) is a \(2\times 2\)-matrix. The main consequence of
this, is that  we can no longer assume the \(f^i_j\) to be commuting. This implies that we can still use the results in this paper as long as they are linear in the \(f^i_j\) and that we have to rethink the nonlinear consequences, for instance when we compute determinants (as we will do in the next example) or in the Levi decomposition, as in all other Lie algebraic results.
\end{Remark}
\begin{Example}\label{exm:2Dnil}
The application of Jacobson-Morozov is an issue that is already playing a role in the following \(2\)-dimensional, \(1\)-color, case,
describing the \(\mathfrak{net}_{1,2}\) situation:
\bas
\dot{x}_1&=&f(x_1,x_1,x_2,x_2),\\
\dot{x}_2&=&f(x_1,x_2,x_1,x_2).
\eas
The Jacobi-matrix of this system is \(f_i=\frac{\partial f}{\partial x_i}\),
\bas
J&=&\begin{pmatrix} f_1+f_2&f_3+f_4\\f_1+f_3&f_2+f_4\end{pmatrix}.
\eas
Remark that the family of Jacobi matrices is at most \(3\) dimensional since both row sums are equal.
For this matrix to be nilpotent, we need its trace and determinant to be zero.
Solving the equations by eliminating \(f_1, f_3\), we see that a general nilpotent matrix in this algebra looks like this:
\bas
(f_2+f_4)\begin{pmatrix} -1&1\\-1&1\end{pmatrix}.
\eas
This implies that the Jacobson-Morozov construction is bound to fail, since with only one nilpotent generator, there cannot
be two linearly independent nilpotents necessary for the existence of an \(\Sl\) in which our first nilpotent should be embedded.

In this case the {\em semigroup} is generated by the maps  \(\sigma_i, i=1,i\ldots,4\) from the index set \(\langle 1,2\rangle\) to itself,
given by rewriting the equation as
\bas
\dot{x}_1&=&f(x_{\sigma_1(1)}, x_{\sigma_{2}(1)}, x_{\sigma_{3}(1)}, x_{\sigma_{4}(1)}),\\
\dot{x}_2&=&f(x_{\sigma_1(2)}, x_{\sigma_{2}(2)}, x_{\sigma_{3}(2)}, x_{\sigma_{4}(2)}).
\eas
One verifies that \(\sigma_2\) is the identity of this semigroup, where the multiplication is given by the associative composition of the maps. Since there is an identity, this also goes by the name of the monoid {\em}.
If there is more than one color, not all \(\sigma\)s can be composed. In that case, they generate a {\em semigroupoid}.

By representing the \(\sigma_i\)s as matrices, we introduce the semigroup algebra and the associated Lie algebra as follows.
By writing a general element of the semigroup algebra as
\bas
f_1\begin{pmatrix}1&0\\1&0\end{pmatrix}+f_2\begin{pmatrix}1&0\\0&1\end{pmatrix}+f_3\begin{pmatrix}0&1\\1&0\end{pmatrix}+f_4\begin{pmatrix}0&1\\0&1\end{pmatrix},
\eas
one can then introduce a Lie bracket in the usual way: \([A,B]=AB-BA\).
This example is continued in Example \ref{exm:dim2col1exm1}, but with \(\sigma_1\) and \(\sigma_2\) interchanged
and in \S\ref{sec:2osc} in a somewhat different setting.
\end{Example}

We started by trying to put the commutation relations of some concrete systems in the Jordan-Chevalley decomposition form \cite{humphreys2012introduction}. We collected information on the dimensions of the components of the Levi decomposition
using \cite{MR2189632}. This was a bit of a frustrating activity since there was an exceptional case, namely ordinary differential equations without any color structure, that is, \(\CC=N\), where these dimensions follow a slightly different formula, making extrapolation very difficult (see the proof of Corollary \ref{cor:hom1} for details). However, with the correct general formula, it was not difficult to guess the
structure of the Lie algebra given the number of cells and colors. What remained was a search for a unifying proof for all cases, and this is given here. That the final answer only depends on the number of cells and the number of colors was something that we had not expected at all when we started.

We will show in this paper that the dimension of the network Lie algebra \(\mathfrak{net}_{\CC,N}\), that is the algebra of the Total Network with \(N\) cells and \(\CC\) colors, \(1\leq \CC\leq N\),
equals \(\BB^2+\BB\CC+\CC^2\), with \(\BB=N-\CC\) (Some readers may want to simplify this expression to \(N^2-\BB\CC\), others may want to conjecture the structure of the Lie algebra from it).
We notice that this result is as simple as it could be: it does not depend on how many cells have a given color; only the totals \(N\) and \(\CC\) play a role; 

The Levi decomposition (cf. \cite[p.101]{MR0123464})
is given by solvable part
\bas
\begin{bmatrix} \dCC &0\\ \sFA{}{} & \dBB\end{bmatrix},
\eas
where \( \sFA{}{}\) is abelian and has dimension \(\BB\CC\), \(\dCC\) and \(\dBB\) are elements 
such that \(\dBB+\dCC=\dI\in\mathcal{Z}(\mathfrak{net}_{\CC,N})\), the center of \(\mathfrak{net}_{\CC,N}\), and the semisimple part, the aforementioned Levi subalgebra,
\bas
	\begin{bmatrix} 
		\hat{\FC{}{}} & 0 \\ 
		0& \hat{\FB{}{}}
	\end{bmatrix}
	&\equiv&
	\begin{bmatrix} 
		\mathfrak{sl}_\CC & 0 \\ 
		0& \mathfrak{sl}_{\BB}
	\end{bmatrix}.
\eas
\begin{Remark}\label{rem:quotient1}
If we identify all variables with the same color, then this quotient network has no cocolors, so it
linearizes to \(\FC{}{}\).
\end{Remark}
The proof of all this consists of several steps.
First, we choose a basis for the matrices of maps from one color space to another.
This choice is different from the classical choice of maps from one color to itself, which is based on the diagonal
and is no longer very natural in the multicolored context.
Writing out the basis in terms of two-tensors suggests to us a new choice of basis, where we group the colors and the cocolors.
Using this new basis, we can now explicitly construct the Lie subalgebras \(\sFA{}{}, \sFB{}{}\) and \(\sFC{}{}\)
and show that they are isomorphic to \(\mathfrak{Gr}_{\CC,N}\), \(\mathfrak{gl}_{\BB}\) and \(\mathfrak{gl}_{\CC}\), respectively.
At the initial stage of our research, some choices (like the bases and the linear map) were inspired by symbolic calculations in Form \cite{Form00} and Maple \cite{MR2189632}.
	
In \S \ref{sec:motivexs} we give two examples with three cells and one, respectively two, colors.
In \S \ref{sec:structure} we start with the semigroupoid formalism to determine a basis for the Lie algebra \(\mathfrak{net}_{\CC,N}\) of linear network vector fields with dimension \(N\) and \(\CC\) colors.
We do this by appointing for every color a coordinate, which we will call the {\em color}; the remaining coordinates with that color will be called {\em cocolors}. To prove our Structure 
Theorem \ref{thm:structure}, the choice of the color coordinate is completely arbitrary, all coordinates are treated as equals. But when it comes to computing a nice representation of the linearized vector field, it will turn out that some coordinates are more equal than others, depending on one's taste and/or the structure of the example.

We formulate Algorithm \ref{alg:ff} to fix the choice of colors and cocolors, which tries to keep a possible feedforward structure intact.
In \S \ref{sec:xybasis} we introduce a simple invertible transformation (Algorithm \ref{alg:xbasis}) that puts the Jacobi-matrix of any given linear network differential equation in the desired block form.
In \S \ref{sec:Lie} we compute the structure constants of \(\mathfrak{net}_{\CC,N}\) and define an involution \(\theta: \mathfrak{net}_{\CC,N}\rightarrow\mathfrak{net}_{\BB,N}\).
In \S \ref{sec:structureLie} we describe \(\mathfrak{net}_{\CC,N}\) in terms of its subalgebras and we determine the Levi decomposition.
In \S \ref{sec:dualpair} we show the existence of a dual pair within \(\mathfrak{net}_{\CC,N}\).
Finally, we give in \S \ref{sec:examples} several examples, with \(N\) ranging from two to eight, followed by the conclusions in \S \ref{sec:conclusions}.

\begin{Remark}
	General background and motivation for the study of linear network vector fields with \(\CC=1\), the homogeneous case, can be found in \cite{MR2481276}.
	Although we follow the semigroupoid approach (cf. \cite{rink2015coupled}), we want to emphasize that the linear transformation 
	that we use to obtain the feedforward block form can be applied without any knowledge of this approach.
	Readers who are only interested in the application of the given algorithmic approach can get quite far by going back and forth between the crucial Algorithm \ref{alg:xbasis},
	Definition \ref{def:xbasis} 
	and the Examples in \S \ref{sec:examples}.
 That no knowledge of semigroup theory is required
 reflects the lack of knowledge of the authors of this approach.
 This in turn can be blamed on the semigroup experts, whose work is very difficult to read for non-experts.
 A good place to start, however, is \cite[\S 5.3]{MR3525092}.
 We took our \(8\)-dimensional example from \cite{MR4183886}, which paper sets out to apply the representation theory of monoids (that is, semigroups with identity) to the computation of the spectrum of the linearized equation.
 Our paper can be seen as the next step in this approach, since it allows us to explicitly transform the linear system
 to block-diagonal form, with the {\em multipliers} on the diagonal, in contrast to loc. cit., where this computation is avoided and 
 possible multipliers are constructed by taking quotients and dimension arguments, based on the representation theory, which is used to
 compute the decomposition of the characteristic invariants.
\end{Remark}
\begin{Remark}
	We remark here that in the literature (cf. \cite{MR4059374})
	the homogeneous case (\(\CC=1\)) is seen as opposite to the fully inhomogeneous case (\(\CC=N\)), that is, general ODEs. We show in this paper that they
	can also be considered as neighbors, since the \(\CC=1\) case turns out to be isomorphic to the \(\CC=N-1\)-case, cf. Theorem \ref{thm:BCdual}.
\end{Remark}

\begin{Acknowledgement}
The authors want to thank Vincent Knibbeler for his careful reading of an earlier version and for pointing out errors in the theoretical setup. After correcting these,
we could finally give proof of what we previously hoped to be obvious.
\end{Acknowledgement}
\section{Preliminaries }
This section aims to clarify the fundamental definitions and terminology established in \cite{rink2015coupled}, to facilitate understanding of the subsequent results presented here.
\subsection{Normal form theory}
The results of this paper are motivated by questions arising in normal form theory.
Here we explain how they arise.
The discussion will be somewhat simplified. For a more extensive treatment, see \cite{SVM2007} and references therein.
In this paper we are not going to give results on nonlinear normal form theory of colored networks, we just prepare the way.

Consider a  differential equation 
\bas
\dot{x}&=& \mathsf{A}x+f(x), \quad x\in\mathbb{R}^n.
\eas
We identify this with the first order differential operator \(F^{(0)}+F^{(1)}\), with \(\partial^i=\frac{\partial}{\partial x_i}\),
\bas
F^{(0)}&=&\sum_{i, j=1}^n \mathsf{A}_i^j x_j \partial^i, \quad 
F^{(1)}=\sum_{i=1}^n f_i(x)\partial^i.
\eas
\begin{Remark}
In the nonlinear context, we prefer the \(\partial^i\)-notation, to emphasize its derivative property, in a linear context we will use \(x^i\) instead since it allows us to express a change of coordinates more easily.
\end{Remark}
We now want to apply transformations to this operator, fixing the linear part determined by the {\em organizing center} \(\mathsf{A}\).
We do this by letting \(G^{(1)}=\sum_{k=1}^n g_k(x)\partial^k\) and applying the formal transformation
\bas
\exp(\ad{G^{(1)}})(F^{(0)}+F^{(1)}),
\eas
where \(\ad{G^{(1)}}F^{(0)}=[G^{(1)},F^{(0)}]^{(1)}\) is well-defined, since first-order operators form a Lie algebra.
The upper index indicates a filtration, that is, we assume \([G^{(k)},F^{(l}]=H^{(k+l)}\).
If the functions are formal power series, then the usual filtration is given by the lowest power in \(x\) minus \(1\),
but we may also allow linear terms in \(F^{(1)}\), cf. \cite{MR3953028}.
Think of operators starting with linear terms of having filtering degree \(0\) and starting with quadratic terms degree \(1\).
With the filtering in place, the \(\exp\) can be computed up to any desired degree.
For instance, if we let denote by \(\mathcal{F}^{(k)}\) the operators of filtering degree \(k\), then
\bas
\exp(\ad{G^{(1)}})(F^{(0)}+F^{(1)})=F^{(0)}+F^{(1)}-[F^{(0)}, G^{(1)}]\mod{\mathcal{F}^{(2)}}.
\eas
If we have a direct (as modules, not necessarily as Lie algebras) summand \(\bar{\mathcal{F}}^{(1)}\) to \(\im\ad{\mathcal{F}^{(k)}}\), 
we can now write \({F}^{(1)}=\bar{F}^{(1)}+\ad{F^{(0)}}{G^{(1)}}\) and we say that \(F^{(0)}+\bar{F}^{(1)}\) is the 
{\em first order normal form with respect to \(\mathsf{A}\)}.
While the image of \(\ad{F^{(0)}}\) is given, the direct summand is subject to choice.
Any choice determines a {\em style} of normal form, a terminology, introduced by Jim Murdock, indicating both taste and fashion.

If \(\mathsf{A}\) is semisimple, say \(\mathsf{S}\), the universal choice is to take \(\bar{\mathcal{F}}^{(1)}=\ker\ad{\mathsf{S}}\).
If \(\mathsf{A}\) is nilpotent, say \(\mathsf{N}\), one might consider the kernel of the adjoint, which is quite natural in the context
of partial differential equations, but in the context of Lie algebras the natural choice seems to be the application
of the Jacobson-Morozov theorem, constructing a triple (or triad) \(\langle \mathsf{N,H,M}\rangle \approx \Sl\) and
define \(\bar{\mathcal{F}}^{(1)}=\ker\ad{\mathsf{M}}\), as suggested by the fact that in the (locally) finite-dimensional situation
\(\mathcal{F}=\im\ad{\mathsf{N}}\oplus\ker\ad{\mathsf{M}}\).
If \(\mathsf{A}=\mathsf{S}+\mathsf{N}\), with commuting semisimple and nilpotent, then we have
\(\mathcal{F}=\im\ad{\mathsf{S+N}}\oplus(\ker\ad{\mathsf{M}}\cap\ker\ad{\mathsf{S}})\).
We remark here that if one has to explicitly compute \(G^{(1)}\) from \(F^{(1)}\), this can be done by solving in style
\bas
\ad{\mathsf{S+M}}(F^{(1)}-[F^{(0)},G^{(1)}])=0\mod{\mathcal{F}^{(2)}}.
\eas
At this point, the description, and computation of the first-order normal form concerning \(\mathsf{A}\), is
well-defined and what remains is to carry out the program in any concrete problem setting.

But the whole discussion, at least when \(\mathsf{A}\) is nonsemisimple, depends on the existence of a nilpotent linear operator,
either the adjoint or the \(\Sl\)-adjoint \(\mathsf{M}\), and as we have seen in Example \ref{exm:2Dnil}, where there is only one nilpotent available,
this is not always the case in the context of colored networks. This paper aims to make the obstruction explicit for general
colored networks. If \(\mathsf{A}\) is semisimple, the situation does not need the theory developed here. In that case, the
only thing one needs to worry about is small divisors, since the action of a semisimple operator on tensor products is again semisimple.

We have illustrated the normal form theory of formal power series vector fields, but it should be clear from this discussion
that the same thing can be done for locally finite filtered Lie algebras in general.
\subsection{Semigroup(oid)}
  The semigroup(oid)s we consider in this paper consist of maps from subsets (colors) of the indices \(1,\ldots,N\).
  The only property they have is that the composition of two elements if it is at all possible, is an associative operation.
  Obviously, a map from color \(1\) to color \(2\) cannot be composed with a map from color \(3\) to color \(4\),
  but a map from color \(1\) to color \(2\) can be composed with a map from color \(2\) to color \(4\), and this composition should be associative. When there is only one color, the semigroupoid becomes a semigroup, or, when it has an identity, a monoid.
  The monoid can very well be a group, but this is something we completely ignore.
  If it is a group, this may show up in the spectrum.
  
  The composition shows up in the composition of transformations and is therefore essential for normal form theory.
  We will not use any of the theory of semigroup(oid)s, so no knowledge of this subject is required to follow the discussion.
  In practical situations, the differential equation may not contain the full semigroup(oid). In this case,
  the given maps are extended by adding all possible compositions.
  The result is a network where all shortcuts are added,
  the {\em Completed Network}. In its simplest form,
  consider
  \bas
  \dot{x}_1&=&f(x_2)=f(x_{\sigma_1(x_1)}),\\
  \dot{x}_2&=&f(x_1)=f(x_{\sigma_1(x_2)}).
  \eas
  Then the semigroup generator \(\sigma_1=(12)\) and \(\sigma_2:=\sigma_1^2\) is the identity.
  So the completed description is
  \bas
  \dot{x}_1&=&\hat{f}(x_2,x_1)=\hat{f}(x_{\sigma_1(x_1)},x_{\sigma_2(x_1}),\\
  \dot{x}_2&=&\hat{f}(x_1,x_2)=\hat{f}(x_{\sigma_1(x_2)},x_{\sigma_2(x_2}).
  \eas
  In a normal form calculation, starting with a given \(f\), this could be done step by step,
  just computing those compositions necessary to compute the brackets in the normal form calculation at each filtering degree
  and leading to an expression of the normal form in terms of some \(\hat{f}\).
  The semigroupoid formalism was introduced in \cite{rink2015coupled}, in \cite{MR4183886} this formalism is explained with many examples
  and the application of representation theory, leading to results that seem to be very useful for the present paper.
\begin{Remark}
    In many publications, one assumes that the function \(f\) is symmetric under the permutation of its arguments
    (this is usually indicated by \(f(\overline{x_1,...,x_N})\)).
    Whether this assumption is based on the properties of some existing model or on the fact that certain calculations
    will be much simplified is not relevant here; what is relevant is that if one wants to keep this symmetry in the normal form transformation, the space of allowed transformations will be reduced, in many cases to the identity transform.
\end{Remark}
	\section{Some motivating examples}\label{sec:motivexs}
\def\NN{3}
\subsection{{$N=3, \CC=1$}}\label{sec:motivex}
In the first example, we intend to explain two things: First, we
want to apply the results from \cite{rink2015coupled} to show how the semigroup appears.
We identify the generators \(\sigma_i\) and show that they form a semigroup in this case, so the Generating Network happens to be the Completed Network and the normal form can be expressed using the given \(\sigma\)'s. Second, we show how the main result, Algorithm \ref{alg:xbasis} works here, and later on, we provide more details about this problem.

\begin{Example}\label{exm:3x1}
In \cite[\S 11.2]{rink2015coupled} the following \(3\)-cell example is treated:
	\colorlet{co1}{black}
	\colorlet{co2}{orange}
	\colorlet{co3}{violet}
	\colorlet{co4}{blue}
	\colorlet{co5}{green}
\colorlet{c1}{red}
\colorlet{cc1}{red}
\colorlet{c2}{red}
\colorlet{c3}{red}
\ba\label{eq:n3c1}
\nonumber
\Dx{1}{\dot{x}}{1}&=&\Ux{1}{f}(\Dx{1}{x}{1},\Dx{1}{x}{2},\Dx{1}{x}{3}),\\
\nonumber
\Dx{2}{\dot{x}}{2}&=&\Ux{1}{f}(\Dx{2}{x}{1},\Dx{1}{x}{2},\Dx{1}{x}{3}),\\
\nonumber
\Dx{3}{\dot{x}}{3}&=&\Ux{1}{f}(\Dx{3}{x}{1},\Dx{2}{x}{2},\Dx{1}{x}{3}).\\
\ea

Or, equivalently (cf. Notation \ref{not:pic}),
\begin{center}
	\colorlet{c1}{red}
	\colorlet{c2}{red}
	\colorlet{c3}{red}
	\begin{tikzpicture}
	\node (n1) at (0,0) {1};
	\node (n2) at (3,0) {2};
	\node (n3) at (1.5,-2.5) {3};
	\node (z1) at (3,0) {};
	\node (z2) at (6,0) {};
	\node (z3) at (4.5,-2.5) {};
	\foreach \i in {0,...,15} 
		\foreach \x in {1,...,\NN} 
			\foreach \k in {x}	
				\node (n\x\k\i) at ($ (n\x)!0.23!360*\i/16:(z\x) $) {};
	\draw[co2,line width=1pt] (n1x1) to[out=0,in=180] node [sloped] {$>$} (n2x7);
	\draw[co3,line width=1pt] (n1x0) to[out=0,in=180] node [sloped] {$>$} (n2x8);
	\draw[co3,line width=1pt] (n1x13) to[out=300,in=120] node [sloped] {$>$} (n3x6);
	\draw[co2,line width=1pt] (n2x11) to[out=240,in=60] node [sloped] {$<$} (n3x2);
	\foreach \i in 
		{n1x0,n1x1,n1x13,n2x11}
			\fill[c1] (\i) circle [radius=4pt];
	\foreach \i in 
		{n2x7,n2x8,n3x2,n3x6,n1x7,n1x8,n1x9,n3x12,n2x1}
			\fill[c1!\int] (\i) circle [radius=4pt];
	\foreach \i in {n3x12}
		\draw[co1,line width=1pt] (\i) to[loop below] node [sloped] {$$} ();
	\foreach \i in {n2x1}
		\draw[co1,line width=1pt] (\i) to[loop right] node [sloped] {$$} ();
	\foreach \i in {n1x9}
		\draw[co1,line width=1pt] (\i) to[loop left] node [sloped] {$$} ();
	\foreach \i in {n1x8}
		\draw[co2,line width=1pt] (\i) to[loop left] node [sloped] {$$} ();
	\foreach \i in {n1x7}
		\draw[co3,line width=1pt] (\i) to[loop left] node [sloped] {$$} ();
	\foreach \i in {1,...,\NN}{
		\fill[c\i!\int] (n\i) circle [radius=20pt];
		\draw[c\i!\int,line width=1pt]	(n\i) to[out=180,in=0] node [color=black] {$x_\i$}		(n\i);}
\end{tikzpicture}
\end{center}	

\begin{Remark}\label{rem:quotient2}
If we identify cell \(1\) with cell \(2\), we obtain a new network, a quotient network.
It may be an interesting exercise to see how the theory that we develop in this paper behaves under taking quotients.
This identification relies on the specific form of the differential equation.
What we can always do, is to identify all cells with the same color, cf. Remark  \ref{rem:quotient1}.
This leads to what we will later call the \(\sFC{}{}\)-block, in this example \(\bigwedge^1=
\begin{pmatrix} f^1_1+f^1_2+f^1_3\end{pmatrix}\).
\end{Remark}
We let, according to the theory developed in \cite{rink2015coupled}, \(\sigma_1=(1,2,3)\),  \(\sigma_2=(1,1,2)\) and  \(\sigma_3=(1,1,1)\).
One verifies that the \(\sigma_i, i=1,2,3\) define a monoid \(\Sigma\), that is a semigroup with identity,  with identity \(\sigma_1\) and zero \(\sigma_3\).

The Completed Network with monoid \(\hat{\Sigma}=\Sigma\) is described by \(f^1(X_1,X_2,X_3)\) and the linearized system is
\bas
\dot{\mathbf{x}}&=&J\mathbf{x},
\eas
where \(J=\sum_{i=1}^3 f^1_i \nu(\sigma_i)\) with
\bas
\nu(\sigma_1)&=&\begin{pmatrix} 1&0&0\\0&1&0\\0&0&1\end{pmatrix},\quad
\nu(\sigma_2)=\begin{pmatrix} 1&0&0\\1&0&0\\0&1&0\end{pmatrix},\quad
\nu(\sigma_3)=\begin{pmatrix} 1&0&0\\1&0&0\\1&0&0\end{pmatrix}
\eas
and \(f^\cc_i=\frac{\partial f^\cc}{\partial X_i}(0,0,0)\), where \((0,0,0)\) is supposed to be a zero of \(f^\cc\).
We call the Lie algebra of all such matrices (with the usual commutator) \(\mathfrak{net}_{1,3}\), or, for \(N\)-cell networks with \(\CC\) colors,  \(\mathfrak{net}_{\CC,N}\).

The Jacobi-matrix of \eqref{eq:n3c1} is
\bas
J=\begin{pmatrix}
{f^1_3} + {f^1_2} + {f^1_1}& 0& 0\\
{f^1_3} + {f^1_2}&{f^1_1}& 0\\
{f^1_3}&{f^1_2}&{f^1_1}
\end{pmatrix}.
\eas

Using the Chevalley method and assuming \(f^1_2+f^1_3\neq 0\), we compute the above mentioned polynomial \(p_\mathsf{N}(x)\) such that \(p_\mathsf{N}(J)\) is the nilpotent part of its
\(\mathsf{S}+\mathsf{N}\)-decomposition:
\bas
p_\mathsf{N}(x)=-\frac{2 (x-f_1^1)^{3}-(f^1_{2}+f^1_{3})( 3 (x-f^1_1)^{2}  +f^1_{1} (f^1_{2}+f^1_3))}{\left(f^1_{2}+f^1_{3}\right)^{2}}.
\eas

Then the result of this complicated formula is simply \(J=\mathsf{S}+\mathsf{N}\), given by
\bas
{\mathsf{N}}=\begin{pmatrix}
0 & 0 & 0 
\\
 0 & 0 & 0 
\\
 -f^1_{2} & f^1_{2} & 0 \end{pmatrix}
=f^1_2(\nu(\sigma_2)-\nu(\sigma_3)), \quad
\mathsf{S}=\begin{pmatrix}
f^1_{1}+f^1_{2}+f^1_{3} & 0 & 0 
\\
 f^1_{2}+f^1_{3} & f^1_{1} & 0 
\\
 f^1_{2}+f^1_{3} & 0 & f^1_{1} 
\end{pmatrix}=f^1_1\nu(\sigma_1)+(f^1_2+f^1_3)\nu(\sigma_3),
\eas
which also holds if \(f^1_2+f^1_3= 0\).
Sofar the usual theory. 

We now turn to the methods developed in this paper.
The relation between the coordinates (using Algorithm \ref{alg:ff}) is given by
\bas
\begin{pmatrix}
x_1\\
x_3\\
x_2\\
\end{pmatrix}=\begin{pmatrix}
\vv01
\\
\vv11
\\
\vv21
\\
\end{pmatrix}=\begin{pmatrix}
\cu1\\
\bu1
\\
\bu2
\\
\end{pmatrix}.
\eas
Using Algorithm \ref{alg:xbasis} we obtain the following Jacobi matrix (in the coordinates \(\cu1, \bu1 \) and \(\bu2\)),
given  in its (obvious) \(\mathsf{S}+\mathsf{N}\)-decomposition:
\bas
\mathsf{J}=\begin{pmatrix}
{f^1_3} + {f^1_2} + {f^1_1}& 0& 0\\
 0&{f^1_1}& 0\\
 0&0&{f^1_1}
\end{pmatrix}+
\begin{pmatrix}
0& 0& 0\\
 0&0& 0\\
 0&{f^1_2}&0
 \end{pmatrix}.
\eas
Using the notation of \cite{MR4183886}, we identify the multipliers 
\bas
\bigwedge^1&=&\begin{pmatrix}{f^1_3} + {f^1_2} + {f^1_1}\end{pmatrix},\\
\bigwedge^{2,3}&=& \begin{pmatrix} {f^1_1}\end{pmatrix}.
\eas
This shows that the method provides us in this case with instant access to the \(\mathsf{S}+\mathsf{N}\)-decomposition
and the multipliers. We did not define 
\bas
\bigwedge^{2}&=& \begin{pmatrix}
 {f^1_1}& 0\\
 {f^1_2}&{f^1_1}
 \end{pmatrix},
\eas
as a multiplier, because if we did that, we would have \(n_2^2=4 > 3 =\CC N\).

It also shows that there is no hope to find an \(\Sl\) within the given Lie algebra. We should emphasize at this point that this is not the end
of a discussion, but rather the start of it. One could, for instance, embed in a reductive Lie algebra and then check whether
the kernel of \(\ad{\mathsf{M}}\) can be seen as a Lie subalgebra of the original network Lie algebra. If that is the case,
then it is normal form business as usual.
\end{Example}
\subsection{$N=3=2\oplus 1$, \protect$\CC=2$}
\def\NN{3}
\begin{Example}\label{exm:3x2}
We consider the following equation, taken from
\cite[\S5.2]{MR4059374}, see also \cite{MR3606590}, but interpreted differently, that is, not fully inhomogeneous, which leads to a slightly restricted case of Example \ref{exm:bas3x2}:
\bas
	\dot{X}&=&f^1(X),\quad X\in\mathbb{R}^2\\
	\dot{y}&=&f^2(X;y)\quad y\in\mathbb{R}
\eas
We write down the general form
\colorlet{co1}{black}
\colorlet{co2}{orange}
\colorlet{co3}{violet}
\colorlet{co4}{blue}
\colorlet{co5}{green}
\colorlet{c1}{red}
\colorlet{cc1}{red}
\colorlet{c2}{red}
\colorlet{c3}{violet}
\colorlet{cc2}{violet}
\bas
	\Dx{1}{\dot{x}}{1}&=&\Ux{1}{f}(\Dx{1}{x}{1},\Dx{1}{x}{2},\Dx{2}{x}{3},\Dx{2}{x}{4}),\\
	\Dx{2}{\dot{x}}{2}&=&\Ux{1}{f}(\Dx{2}{x}{1},\Dx{1}{x}{2},\Dx{1}{x}{3},\Dx{2}{x}{4}),\\
	\Dx{3}{\dot{x}}{3}&=&\Ux{2}{f}(\Dx{1}{x}{1},\Dx{2}{x}{2};\Dx{3}{x}{3}).
\eas
Or, equivalently (cf. Notation \ref{not:pic}),
\begin{center}
	\begin{tikzpicture}
	\node (n1) at (0,0) {1};
	\node (n2) at (3,0) {2};
	\node (n3) at (1.5,-2.5) {3};
	\node (z1) at (3,0) {};
	\node (z2) at (6,0) {};
	\node (z3) at (4.5,-2.5) {};
	\foreach \i in {0,...,15} 
		\foreach \x in {1,...,\NN} 
			\foreach \k in {x}	
				\node (n\x\k\i) at ($ (n\x)!0.23!360*\i/16:(z\x) $) {};
		\draw[co2,line width=1pt] (n1x1) to[out=0,in=180] node [sloped] {$>$} (n2x7);
		\draw[co3,line width=1pt] (n1x14) to[out=0,in=180] node [sloped] {$>$} (n2x10);
		\draw[co1,line width=1.5pt] (n1x13) to[out=300,in=120] node [sloped] {$>$} (n3x6);
		\draw[co3,line width=1pt] (n2x8) to[out=180,in=0] node [sloped] {$<$} (n1x0);
		\draw[co4,line width=1pt] (n2x9) to[out=180,in=0] node [sloped] {$<$} (n1x15);
		\draw[co2,line width=1.5pt] (n2x11) to[out=240,in=60] node [sloped] {$<$} (n3x2);
	\foreach \i in {n1x1,n1x14,n1x13,n2x8,n2x9,n2x11}
		\fill[c1] (\i) circle [radius=4pt];
	\foreach \i in {n2x15,n2x1,n1x0,n1x15,n2x7,n2x10,n3x2,n3x6,n1x9,n1x7}
		\fill[c1!\int] (\i) circle [radius=4pt];
	\foreach \i in {n3x12}
		\fill[c3!\int] (\i) circle [radius=4pt];
	\foreach \i in {n2x1}
		\draw[co4,line width=1pt] (\i) to[loop right] node [sloped] {$$} ();
	\foreach \i in {n2x15}
		\draw[co1,line width=1pt] (\i) to[loop right] node [sloped] {$$} ();
	\foreach \i in {n1x9}
		\draw[co2,line width=1pt] (\i) to[loop left] node [sloped] {$$} ();
	\foreach \i in {n1x7}
		\draw[co1,line width=1pt] (\i) to[loop left] node [sloped] {$$} ();
	\foreach \i in {n3x12}
		\draw[co3,line width=1pt] (\i) to[loop below] node [sloped] {$$} ();
	\foreach \i in {1,...,\NN}{
		\fill[c\i!\int] (n\i) circle [radius=20pt];
		\draw[c\i!\int,line width=1pt]	(n\i) to[out=180,in=0] node [color=black] {$x_\i$}		(n\i);}
	\end{tikzpicture}
\end{center}	
We continue this example, slightly generalized, in Example \ref{exm:bas3x2}.
\end{Example}
\begin{Remark}
If one has a concrete differential equation, how does one see whether it is a colored network equation?
This question is difficult to answer since there might be many possibilities.
But is it a relevant question?
Probably not, since the network description should be part of the modeling.
	If there is no colored network to begin with (except \(N=\CC\)), there is also no need to stay in the colored network framework.
Despite this, if one can give a nontrivial description of the equation as a colored network equation,
the transformations to be described in this paper can still be used to simplify the Jacobi-matrix without knowledge of the eigenvalues.


The situation can be compared to the one in Hamiltonian mechanics: a given equation might be identifiable as a Hamiltonian equation
and one might be able to immediately conclude from this fact, like conservation of energy, even if there is no physical reason why the system would be Hamiltonian, cf. the Volterra-Lotka equation.
\end{Remark}
\section{Structure theory in the colored case}\label{sec:structure}
\begin{Remark}\label{rem:sigmaFun}
The \(\sigma\)-notation only plays a role in the theoretical formulation. 
For the application of the theory to any given example, it is enough to apply the Algorithms \ref{alg:ff} and \ref{alg:xbasis} and Definition \ref{def:xbasis}.
	The Algorithms \ref{alg:ff} and \ref{alg:xbasis} work for general colored networks, they are not required to be closed under taking brackets.
\end{Remark}
While in \cite{rink2015coupled} the approach is to start with a limited given number of \(\sigma\)s and extend this to a semigroup(oid) (the Completed Network),
the approach here is to work from the other side and consider all possible \(\sigma\)s. We then try to reduce the number by going to a matrix representation of the \(\sigma\)s and 
constructing a basis for the representation. Once we have a basis, we construct explicitly a new basis in which the structure of the Lie algebra can be easily determined. 
The transformations that are used can also be used if we work with a limited number of \(\sigma\)s, either in the Generating Network
or the Completed Network; in the latter case one works in a subalgebra of the general network Lie algebra.

We consider differential equations of the following type.
For every color (as in \S \ref{sec:motivexs}) \(\cc\in\{1,\ldots,\CC\}\), we assign a dimension \(n_{\cc}\in \mathbb{N}\),
the number of cells with color \(\cc\) (that is, \(x_i\) with \(\dot{x}_i=f^\cc(x_{\sigma_1(i)},\cdots,x_{\sigma_N(i)}))\) and write \(\bar{n}_\cc=n_\cc-1\) to simplify notation
and define index sets \(R_{\cc}=\{0,\ldots,\bar{n}_{\cc}\}\) and \(\bar{R}_{\cc}=\{1,\ldots,\bar{n}_{\cc}\}\). 
We have coordinates \(\vv{i}{\cc}\) and differential equations
\ba\label{eq:CU}
	\dvv{i}{\cc}&=&f^\cc(\mathsf{U}^{\cc,k_1}_{i,l_1};\mathsf{U}^{\cc,k_2}_{i,l_2};\cdots;\mathsf{U}^{\cc,k_{M_\cc}}_{i,l_{M_\cc}}),\quad i=0,\ldots,\bar{n}_\cc,\quad 1\leq\cc,k_1,\ldots,k_{M_\cc}\leq\CC,
\ea
where each \(\mathsf{U}^{\cc,k_j}_{i,l_j}\) denotes a list of coordinates with color \(k_j\) of length \(l_j\),
fixing the color of every possible input.
So the first \(k_1\) arguments of \(f^\cc\) should have color \(k_1\), that is their differential equation is 
\bas
\dvv{q}{k_1}&=&f^{k_1}(\mathsf{U}^{k_1,k_1}_{q,l_1};\mathsf{U}^{k_1,k_2}_{q,l_2};\cdots;\mathsf{U}^{k_1,k_{M_{k_1}}}_{q,l_{M_{k_1}}}), \quad q\in R_{k_1}.
\eas
These formulas illustrate why some authors prefer to restrict their publications to the case \(\CC=1\).

Let \(\Delta_\cc=\sum_{j=1}^{M_\cc}l_j\) be the {\bf degree} of the color \(\cc\), that is the number of arguments of \(f^\cc\).
In the pictures, this is the total number of incoming arrows; \(\Delta=\sum_{\cc=1}^\CC \Delta_\cc\) is the number of parameters
\(f^\cc_q\) in the Jacobi matrix.

We remark that 
the \(\mathsf{U}\)-notation is only intended to give a general definition of a colored network differential equation.
If a given equation does not satisfy the definition, applying our algorithms may give unpredicted results. We say this because it is very easy
to make mistakes when one changes an existing equation.

Our \(N\)-dimensional space, with \(N=\sum_{\cc=1}^\CC n_\cc\) has then coordinates \(\mathcal{N}={[}\vv{j}{\cc}{]}^{\cc=1,\ldots,\CC}_{ j\in R_\cc}\).
We order this list, for example lexicographically: \(\vv{j}{i}\prec\vv{j^\prime}{i^\prime}\) if \(i<i^\prime\)  or, if \(i=i^\prime, j<j^\prime\).
We now split 
\[\mathcal{N}=\cCC\cup\cBB\] as follows:
\ba\label{eqs:split}
\cCC&=&{[}\vv{0}{1},\vv{0}{2},\cdots,\vv{0}{\CC}{]},\\\nonumber
	\cBB&=&{[}\vv{j}{\cc}{]}^{\cc=1,\ldots,\CC}_{ j\in \bar{R}_\cc}.
\ea
We chose the subindex in the definition of \(\cCC\) to be \(0\) here, because we know beforehand that it is always present;
as a consequence, we usually have sums starting from \(0\) or \(1\).
We identify \(\cCC\) with the list of colors and call the elements in \(\cBB\) {\em cocolors}.
Here, the {\em co} in the {\em cocolors} can be first seen as the {\em co}
of the complement, later we will think of this {\em co} in its usual dual meaning, but this will require some work.
At the moment, we can only give a posteriori explanation for this splitting in colors and cocolors,
which turns out to be very natural and reflects the symmetry induced on the differential equation by having only one \(f^\cc\) per color.

In any given example with coordinates \(x_1,\cdots,x_N\) and a standard dual basis \(x^1,\cdots,x^N\), we have to identify these with \(\vv{j}{\cc}\) and \(\VV{j}{\cc}\). This may involve some rather arbitrary choices,
but if we use Algorithm \ref{alg:ff} much of this arbitrariness will disappear. While the choices are arbitrary in the sense that they do not influence the 
promised \(\FA{}{},\FB{}{},\FC{}{}\)-block form, the algorithm may improve the result. For instance, in Examples \ref{exm:3x1} and \ref{exm:dim5col1exm1a}
it preserves the feedforward
structure, with nice consequences for the application of both the Jordan-Chevalley decomposition and the Jacobson-Morozov theorem.

Let \(\BB=N-\CC\) be the cardinality of \(\cBB\).
	The following definition introduces new notation for the coordinates (\(\cu{}{},\bu{}{}\)) and will be used extensively in \S \ref{sec:Lie}, where it simplifies matters considerably, 
	but it turns out that some of the proofs get very complicated in this new notation,
	since the position in the list \(\cBB\) does not give any color information, we will use the old notation (\(\vv{}{}\)) till we get there. 
	Let the \(\vv{i}{k}\) be a basis of \(\mathbb{R}^{n_k}\) and \(\VV{i}{k}\) its standard dual basis, a basis of \(\mathbb{R}^{n_k\star}\), the dual of \(\mathbb{R}^{n_k}\).
	\begin{Definition}\label{def:xynotation}
		We define \(\cu{k}=\vv{0}{k}, \cU{k}=\VV{0}{k}\) and \(\bu{l}=\vv{i}{k}, \bU{l}=\VV{i}{k}, i\in\bar{R}_k\), 
		where \(l\) is the position of \(\vv{i}{k}\) in the list \(\cBB\).
	\end{Definition}

\begin{Example}\label{exm:motivexa}
In our motivating Example \ref{exm:3x1}, this leads to:  \(\cCC=\{\vv{0}{1}\}\) and \(\cBB=\{\vv{1}{1},\vv{2}{1}\}\),
with \(N=3, \CC=1, \BB=2\), \(n_{1}=3\) and \(R_{1}=\{0,1,2\}\), \(\bar{R}_1=\{1,2\}\). 
	We now have to identify \(x_1,x_2,x_3\) with \(\vv{0}{1},\vv{1}{1},\vv{2}{1}\) and \(\cu{1},\bu{1},\bu{2}\).
 \end{Example}
 \begin{Remark}\label{rem:choice}
		We remark here that we make two kinds of choices here.
		First the choice of the \(\vv{0}{1}\) color-coordinate, second the ordering of the cocolor coordinates for each color.
		In this particular case, we can say \(x_1=\vv{0}{1}, x_2=\vv{1}{1}\) and \(x_3=\vv{2}{1}\) and this is an excellent choice.
		But if we make another choice, we might easily lose the upper triangular structure of the Jacobi-matrix, which is determined by the feedforward structure.
\end{Remark}
		
		To this end, we suggest  Algorithm \ref{alg:ff} for \(\cc=1,\ldots,\CC\). In this algorithm, we assign \(\vv{}{\cc}\) coordinates to the \(x_i\) coordinates
and in the meantime redefine the \(\cu{},\bu{}\) coordinates as in Definition \ref{def:xynotation}. 
	The remaining part of this section is devoted to theoretical matters. 
	If one is just interested in putting a particular example in block form, it can be skipped.
	
	\begin{Definition}\label{def:nu}
		Let \(k\) and \(l\) be colors, and \(\Sigma_{k}^{l}\) the set of maps with domain \(R_l\) taking their values from \(R_{k}\).
	We can think of \(\sigma\in\Sigma_{k}^{l}\) as an \(n_l\)-tuple \((k^\sigma_0,\ldots,k^\sigma_{\bar{n}_l})_k^l\), with \(k^\sigma_j\in R_k\) for \( j\in R_l\).
	Then the cardinality  of \(\Sigma_{k}^{l}\) is \(n_{k}^{n_{l}}\). We let \(\tr{\sigma}\) the number of \(j\in R_l\) such that
 \(j=k^\sigma_j\).
		We define  a map \(\nu:\Sigma_{k}^{l}\rightarrow\mathbb{R}^{n_k}\otimes\mathbb{R}^{n_l\star}=\mathrm{Hom}(\mathbb{R}^{n_l},\mathbb{R}^{n_k}) \) as follows. 
	To each \(\sigma\in\Sigma_{k}^{l}\) we assign a \(2\)-tensor 
		\bas
		\nu(\sigma)&=&\sum_{j=0}^{\bar{n}_l}\vv{k^\sigma_j}{k}\otimes\VV{j}{l}\in\mathbb{R}^{n_k}\otimes\mathbb{R}^{n_l\star}\subset\mathfrak{gl}_N,
  \eas
  mapping \(\vv{q}{l}\) to \(\vv{\sigma(q)}{k}\):
  \bas
  \nu(\sigma)\cdot\vv{q}{l}&=&\sum_{j=0}^{\bar{n}_l}\vv{k^\sigma_j}{k}\otimes\VV{j}{l}\cdot\vv{q}{l}=\sum_{j=0}^{\bar{n}_l}
  \vv{k^\sigma_j}{k}\delta^j_q=\vv{k^\sigma_q}{k}=\vv{\sigma(q)}{k}.
  \eas
  We now let \(\mathrm{tr}: V\otimes V^\star\rightarrow \mathbb{R}\) be defined by \(\tr{x\otimes y}=y(x)\). Then for 
  \(\sigma\in\Sigma_k^l\), we find
 \bas \tr{\nu(\sigma)}&=&\sum_{j=0}^{\bar{n}_l}\VV{j}{l}\cdot \vv{k^\sigma_j}{k}=\delta^k_l\sum_{j=0}^{\bar{n}_l} \delta^{j}_{k^\sigma_j}=\delta^k_l\tr{\sigma},
		\eas
		that is, if \(k=l\), then \(\tr{\nu(\sigma)}\) counts the number of self interactions.
	\end{Definition}
	 \begin{Lemma}\label{lem:antirep}
		The \(\left\{\Sigma_k^l\right\}_{k,l=1}^\CC\) together form a semigroupoid \(\Sigma\) and \(\nu\) is a faithful antirepresentation of \(\Sigma\) in \(\mathfrak{gl}_N\).
	\end{Lemma}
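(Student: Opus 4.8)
The plan is to realize \(\Sigma\) as a (small) category and then read off both claims about \(\nu\) from a single bilinear contraction of the defining tensors.

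First I would fix the semigroupoid data. The objects are the colors \(1,\dots,\CC\); the arrows from \(l\) to \(k\) are the elements of \(\Sigma_{k}^{l}\), i.e. the maps \(R_l\to R_k\). Following the convention of \cite{rink2015coupled} I take the product of two composable arrows to be composition read in the order of the network flow, \(\sigma\star\tau:=\tau\circ\sigma\), which is defined exactly when the target of \(\sigma\) equals the source of \(\tau\); for \(\sigma\in\Sigma_{k}^{l}\) and \(\tau\in\Sigma_{m}^{k}\) this gives \(\sigma\star\tau\in\Sigma_{m}^{l}\), so the hom-sets are closed. Associativity of \(\star\) is immediate from associativity of \(\circ\), and the identity maps \(\mathrm{id}_{R_k}\in\Sigma_{k}^{k}\) are two-sided units, so \(\Sigma\) is a semigroupoid (in fact a category). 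Under \(\nu\) each arrow \(\sigma\in\Sigma_{k}^{l}\) is sent into the block \(\mathrm{Hom}(\mathbb{R}^{n_l},\mathbb{R}^{n_k})\subset\mathfrak{gl}_N\) cut out by the chosen ordering of \(\mathcal{N}\).

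The core of the proof is a single computation of \(\nu(\tau)\nu(\sigma)\). For \(\sigma\in\Sigma_{k}^{l}\) and \(\tau\in\Sigma_{m}^{k'}\) I would expand both factors by Definition \mref{def:nu} and use the duality \(\VV{i}{k}(\vv{j}{k'})=\delta_{kk'}\delta_{ij}\); the only surviving terms are those with \(k=k'\) and \(i=\sigma(j)\), giving \(\nu(\tau)\nu(\sigma)=\sum_{j}\vv{\tau(\sigma(j))}{m}\otimes\VV{j}{l}=\nu(\tau\circ\sigma)=\nu(\sigma\star\tau)\), and \(\nu(\tau)\nu(\sigma)=0\) whenever \(k\neq k'\). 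Thus \(\nu(\sigma\star\tau)=\nu(\tau)\nu(\sigma)\), which is exactly the statement that \(\nu\) is an antirepresentation, while the matrix product of two non-composable arrows vanishes, matching the fact that \(\star\) is undefined on such pairs. The real content here — and the step I expect to be the main obstacle — is the bookkeeping: keeping the source/target matching straight so that the reversal of order (rather than its preservation) comes out, and checking that partiality of \(\star\) in \(\Sigma\) corresponds precisely to the vanishing \(\VV{i}{k}(\vv{j}{k'})=0\) for \(k\neq k'\) in \(\mathfrak{gl}_N\). Everything else is the one-line contraction above.

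Finally I would prove faithfulness. Since \(\nu(\sigma)\) sends \(\vv{q}{l}\mapsto\vv{\sigma(q)}{k}\) and the \(\vv{i}{k}\) are linearly independent, the image matrix determines \(\sigma(q)\) for every \(q\in R_l\), hence determines \(\sigma\); so \(\nu\) is injective on each hom-set \(\Sigma_{k}^{l}\). Moreover \(\nu(\sigma)\neq 0\) always, and arrows with different source or target have images in different, complementary blocks of \(\mathfrak{gl}_N\), so they cannot coincide. Therefore \(\nu\) is injective on all of \(\Sigma\), i.e. faithful, which completes the proof.
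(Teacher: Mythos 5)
Your proposal is correct and its core computation is the same one the paper performs: expanding $\nu(\tau)\nu(\sigma)$ as a contraction of the defining two-tensors and using duality of the bases to land on $\nu(\tau\circ\sigma)$, with the reversal of order accounting for the ``anti''. The only difference worth noting is that you also check the semigroupoid axioms explicitly and, more substantively, supply the faithfulness argument (injectivity of $\nu$ within each $\Sigma_k^l$ because the matrix determines $\sigma(q)$ for every $q$, and across hom-sets because the images sit in distinct nonzero blocks), a part of the statement the paper's own proof leaves unaddressed.
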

	\begin{proof}
	Let \(\sigma^l_k\in\Sigma_k^l\) and \(\bar{\sigma}^m_l\in\Sigma_l^m\), so that their composition exists. Then we compute \(\bar{\sigma}^m_l\cdot\sigma^l_k\in\Sigma_k^m\):
	\bas
	\bar{\sigma}\cdot{\sigma}&=&
	(l^{\bar{\sigma}^m_l}_0,\ldots,l^{\bar{\sigma}^m_l}_{\bar{n}_m})
	\cdot
	(k^{\sigma^l_k}_0,\ldots,k^{\sigma^l_k}_{\bar{n}_l})
	\\&=&(k^{\sigma^l_k}_{l^{\bar{\sigma}^m_l}_0},\ldots,k^{\sigma^l_k}_{l^{\bar{\sigma}^m_l}_{\bar{n}_m}}),
	\eas
	that is,
		\(k^{\bar{\sigma}^m_l\cdot{\sigma}^l_k}_i=k^{\sigma^l_k}_{l^{\bar{\sigma}^m_l}_i}=k^{\sigma^l_k}_{\bar{\sigma}^m_l(i)}
  , i=0,\cdots,\bar{n}_m\).
	Then
	\bas
		\nu(\sigma^l_k)\cdot\nu(\bar{\sigma}^m_l)&=&
		\sum_{i_1=0}^{\bar{n}_l}\vv{k^{\sigma^l_k}_{i_1}}{k}\otimes\VV{i_1}{l}
		\cdot
		\sum_{i_2=0}^{\bar{n}_m}\vv{l^{\bar{\sigma}^m_l}_{i_2}}{l}\otimes\VV{i_2}{m}
=\sum_{i_1=0}^{\bar{n}_l}\sum_{i_2=0}^{\bar{n}_m}\delta_{l^{\bar{\sigma}^m_l}_{i_2}}^{i_1}
		\vv{k^{{\sigma}^k_l}_{i_1}}{k}\otimes \VV{i_2}{m}
  \\
&=&
		\sum_{i=0}^{\bar{n}_m}
		\vv{k^{{\sigma}^l_k}_{l^{\bar{\sigma}^m_l}_{i}}}{k}\otimes \VV{i}{m}
		=\sum_{i=0}^{\bar{n}_m}
		\vv{k^{\bar{\sigma}^m_l\cdot\sigma^l_k}_{i}}{k}\otimes \VV{i}{m}
		\\&=&\nu(\bar{\sigma}^m_l\cdot\sigma^l_k).
		\eas
	\end{proof}
\begin{Corollary}
	We define the (Lie) algebra \(\mathfrak{net}_{C,N}\) as follows. It is the \(\mathbb{R}\)-span of the matrices \(\nu(\sigma)\in\mathfrak{gl}_N, \sigma\in\Sigma_k^l, k,l=1,\ldots,\CC\).
	We have seen in Lemma \ref{lem:antirep} that the product of two of these is again in \(\mathfrak{net}_{C,N}\) and therefore \(\mathfrak{net}_{C,N}\) is a
	(Lie) subalgebra of \(\mathfrak{gl}_N\). In the sequel, we restrict our attention to the Lie algebra \(\mathfrak{net}_{C,N}\). 
	We leave it to the reader, if necessary, to derive the structure
	constants for the algebra, following the methods of Section \ref{sec:Lie}.
\end{Corollary}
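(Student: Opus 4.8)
The plan is to show the stated span is closed under the commutator bracket, and for this it is cleanest to first establish closure under ordinary (associative) matrix multiplication. Indeed, each \(\nu(\sigma)\) is a genuine matrix in \(\mathfrak{gl}_N\) and the bracket is the commutator \([X,Y]=XY-YX\); so once we know that \(XY\in\mathfrak{net}_{\CC,N}\) whenever \(X,Y\in\mathfrak{net}_{\CC,N}\), closure under the bracket follows immediately, since \(XY\) and \(YX\) both lie in the span and the span is a linear subspace. By bilinearity of the matrix product it then suffices to check that the product of two spanning elements \(\nu(\sigma)\,\nu(\tau)\) lands back in \(\mathfrak{net}_{\CC,N}\).

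So I would take arbitrary \(\sigma\in\Sigma_k^l\) and \(\tau\in\Sigma_{l'}^m\) with \(k,l,l',m\in\{1,\ldots,\CC\}\) and analyse the product \(\nu(\sigma)\,\nu(\tau)\) according to the block structure of \(\mathfrak{gl}_N\) induced by the color decomposition \(\mathbb{R}^N=\bigoplus_{\cc=1}^{\CC}\mathbb{R}^{n_\cc}\). Here \(\nu(\sigma)\in\mathrm{Hom}(\mathbb{R}^{n_l},\mathbb{R}^{n_k})\) occupies the \((k,l)\) block and \(\nu(\tau)\in\mathrm{Hom}(\mathbb{R}^{n_m},\mathbb{R}^{n_{l'}})\) occupies the \((l',m)\) block. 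The matrix product of a \((k,l)\)-block matrix with an \((l',m)\)-block matrix is supported in the \((k,m)\) block and is nonzero only when the inner indices agree, i.e. when \(l=l'\).

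This splits the verification into two cases. If \(l\neq l'\), then \(\nu(\sigma)\,\nu(\tau)=0\in\mathfrak{net}_{\CC,N}\) by block orthogonality, and there is nothing more to do. If \(l=l'\), then \(\sigma\) and \(\tau\) are exactly composable in the semigroupoid \(\Sigma\) (the domain of \(\sigma\) matches the codomain of \(\tau\)), so Lemma \ref{lem:antirep} applies and gives \(\nu(\sigma)\,\nu(\tau)=\nu(\tau\cdot\sigma)\) with \(\tau\cdot\sigma\in\Sigma_k^m\). Since \(k,m\leq\CC\), the element \(\nu(\tau\cdot\sigma)\) is one of the spanning generators, hence lies in \(\mathfrak{net}_{\CC,N}\). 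In both cases the product stays in the span, which establishes closure under multiplication and therefore under the bracket.

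The one genuinely careful point — and what I would flag as the crux — is that \(\nu\) is only a \emph{semigroupoid} antirepresentation: the composition \(\tau\cdot\sigma\) is defined only when the intermediate color indices match, so one cannot simply invoke ``\(\nu\) is multiplicative.'' The role of the block-orthogonality argument is precisely to cover the complementary case \(l\neq l'\), where no composition is defined in \(\Sigma\) but the matrix product vanishes anyway. Once this case distinction is handled, the remaining content is the general and routine fact that an associative matrix subalgebra is automatically a Lie subalgebra under the commutator.
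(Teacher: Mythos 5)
Your proposal is correct and follows essentially the same route as the paper: closure of the span under matrix multiplication via Lemma \ref{lem:antirep}, hence closure under the commutator $[X,Y]=XY-YX$. You are in fact slightly more careful than the paper, which cites Lemma \ref{lem:antirep} without comment even though that lemma only treats composable pairs; your block-orthogonality observation that $\nu(\sigma)\nu(\tau)=0$ when the inner color indices disagree is precisely the detail needed to make that citation cover all products of generators.
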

	\begin{Definition}\label{def:netbasis}
		We fix a basis (cf. Theorem \ref{thm:Basis}) \(\cBB_k^l\) for \(\nu(\Sigma_{k}^{l})\) as follows:
  \[
0 < k, l \leq \mathbb{C}, \quad
\left\{
\begin{array}{lrcl}
j=0, & \F{l,0}{k,0}(\vv{}{}) &=& \nu(([0]_{n_{l}})^{l}_{k}) = \vv{0}{k} \otimes \sum_{q\in R_{l}} \VV{q}{l},\\
&\tr{\F{l,0}{k,0}(\vv{}{})}&=&\delta^k_l,\\
&&
\\
j \in \bar{R}_{k}, & \F{l,i}{k,j}(\vv{}{}) &=& \nu(([0]_{i}, j, [0]_{\bar{n}_{l}-i})^{l}_{k}) =\F{l,0}{k,0}(\vv{}{}) + (\vv{j}{k} - \vv{0}{k}) \otimes \VV{i}{l}, \quad i \in R_{l},\\
&\tr{\F{l,i}{k,j}(\vv{}{})}&=&\delta^l_k(1+\delta^i_j-\delta^i_0),
\end{array}
\right.
\]
	where \([0]_p\) is a row of zeros of length \(p\).
	\end{Definition}

	\begin{Theorem}\label{thm:Basis}
		\(\cBB_k^l\) is a basis, that is, every \(\nu(\sigma), \sigma\in\Sigma_k^l\), can be expressed in terms of \(\cBB_k^l\):
	\bas
			\nu(\sigma)&=& 
			\sum_{i\in Q} \F{l,i}{k,k^\sigma_i}(\vv{}{})+(1-|Q|)\F{l,0}{k,0}(\vv{}{}),
	\eas
		where \(Q\), with cardinality \(|Q|\), is the set of all \(i\in R_l\) such that \(k^\sigma_i>0\), and the elements in \(\cBB_k^l\) are linearly independent. 
  Since if \(0\in Q\) then  \(\delta^0_{k^\sigma_0}=0\) does not contribute to the trace, while if \(0\notin Q\) then \(k^\sigma_0=0 \), and \(\delta^0_{k^\sigma_0}\) contributes \(1\) to the trace, we find
  \bas
  \tr{\nu(\sigma)}&=&\sum_{i\in Q} \tr{\F{l,i}{k,k^\sigma_i}(\vv{}{})}+(1-|Q|)\tr{\F{l,0}{k,0}(\vv{}{})}\\
  &=&\delta^k_l \sum_{i\in Q} (1+\delta^i_{k^\sigma_i}-\delta^i_0)+(1-|Q|)\delta^k_l\\
   &=&\delta^k_l \sum_{i\in Q} (\delta^i_{k^\sigma_i}-\delta^i_0)+\delta^k_l\\
  &=&\left\{ \begin{array}{c} \mbox{ if } 0\in Q \mbox{ then }=-\delta^k_l+\delta^k_l \sum_{i\in Q\setminus {0}} \delta^i_{k^\sigma_i}+\delta^k_l\\ \mbox{ if } 0\notin Q \mbox{ then } k^\sigma_0=0 \mbox{ and } \delta^k_l \sum_{i\in Q} \delta^i_{k^\sigma_i}+\delta^k_l\end{array}\right\}=\delta^k_l\tr{\sigma},
  \eas
	as it should be.
 \end{Theorem}
	\begin{proof}
	Let \(\sigma=(k^\sigma_0,\ldots,k^\sigma_{\bar{n}_l})^k_l\).
		If \(k^\sigma_i>0\),
		\bas
		\vv{k^\sigma_i}{k}\otimes\VV{i}{l}&=&\F{l,i}{k,k^\sigma_i}(\vv{}{})-\F{l,0}{k,0}(\vv{}{})+\vv{0}{k}\otimes\VV{i}{l} .
		\eas
		Let \(Q\) be the set of all \(i\) such that \(k^\sigma_i>0\), with cardinality \(|Q|\), and let \(\complement Q\) be its complement  in \(R_l\). Then
		\bas
			\nu(\sigma)&=& \sum_{i=0}^{\bar{n}_l}
			\vv{k^\sigma_i}{k}\otimes\VV{i}{l}
			\\&=&\sum_{i\in Q} 
			\vv{k^\sigma_i}{k}\otimes\VV{i}{l}
			+\sum_{i\in \complement{Q} }
			\vv{k^\sigma_i}{k}\otimes\VV{i}{l}
			\\&=&\sum_{i\in Q} \left(\F{l,i}{k,k^\sigma_i}(\vv{}{})-\F{l,0}{k,0}(\vv{}{})+\vv{0}{k}\otimes\VV{i}{l} \right)
			+\sum_{i\in \complement{Q}} \vv{0}{k}\otimes\VV{i}{l}
			\\&=&\sum_{i\in Q} \left(\F{l,i}{k,k^\sigma_i}(\vv{}{})-\F{l,0}{k,0}(\vv{}{})\right)
			+\sum_{i\in R_l} \vv{0}{k}\otimes\VV{i}{l}
			\\&=&\sum_{i\in Q} \F{l,i}{k,k^\sigma_i}(\vv{}{})+(1-|Q|)\F{l,0}{k,0}(\vv{}{}).
\eas
	This shows that an arbitrary \(\nu(\sigma)\) can be expressed as a linear combination of the elements in \(\cBB_k^l\).
	The linear independence of these elements is easy to see.
	\end{proof}
	\begin{Lemma}\label{lem:dim}
		The dimension of \(\mathfrak{net}_{\CC,N} \) is \(\CC^2+\BB \CC+\BB^2=N^2-\BB\CC\).
	\end{Lemma}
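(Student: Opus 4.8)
The plan is to compute $\dim\mathfrak{net}_{\CC,N}$ block by block and then sum. For each ordered pair of colors $(k,l)$ with $1\le k,l\le\CC$, every matrix $\nu(\sigma)$ with $\sigma\in\Sigma_k^l$ lies in the block $\mathbb{R}^{n_k}\otimes\mathbb{R}^{n_l\star}\subset\mathfrak{gl}_N$, and the blocks attached to distinct pairs $(k,l)$ occupy disjoint sets of matrix entries (rows of color $k$, columns of color $l$). Hence $\mathfrak{net}_{\CC,N}$ is the direct sum, over all $(k,l)$, of the spans $\Span\,\nu(\Sigma_k^l)$, and it suffices to find each of these dimensions and add them.

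First I would count the basis $\cBB_k^l$ supplied by Theorem \mref{thm:Basis}. Reading off Definition \mref{def:netbasis}, it consists of the single element $\F{l,0}{k,0}(\vv{}{})$ together with the elements $\F{l,i}{k,j}(\vv{}{})$ indexed by $j\in\bar{R}_k$ and $i\in R_l$. Since $\#\bar{R}_k=\bar{n}_k=n_k-1$ and $\#R_l=n_l$, and since these elements are linearly independent by Theorem \mref{thm:Basis}, the block dimension is
\bas
\dim\Span\,\nu(\Sigma_k^l)&=&1+\bar{n}_k\,n_l=1+(n_k-1)n_l.
\eas
As a conceptual check, each $\nu(\sigma)$ has every column equal to a single standard basis vector, so each of its $n_l$ column sums equals $1$; consequently every element of the span has all column sums equal, which is a codimension-$(n_l-1)$ condition inside the block, giving dimension $n_kn_l-(n_l-1)=1+(n_k-1)n_l$ and confirming that $\cBB_k^l$ exhausts this constrained subspace.

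Summing over all colors then yields
\bas
\dim\mathfrak{net}_{\CC,N}&=&\sum_{k=1}^\CC\sum_{l=1}^\CC\bigl(1+(n_k-1)n_l\bigr)=\CC^2+\Bigl(\sum_{k=1}^\CC(n_k-1)\Bigr)\Bigl(\sum_{l=1}^\CC n_l\Bigr).
\eas
Using $\sum_{k=1}^\CC(n_k-1)=N-\CC=\BB$ and $\sum_{l=1}^\CC n_l=N$, the right-hand side equals $\CC^2+\BB N$. Substituting $N=\BB+\CC$ gives $\CC^2+\BB N=\CC^2+\BB\CC+\BB^2$, and since $N^2-\BB\CC=(\BB+\CC)^2-\BB\CC=\BB^2+\BB\CC+\CC^2$ as well, both stated expressions agree, proving the lemma.

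The computation is essentially routine once Theorem \mref{thm:Basis} is in hand, so there is no deep obstacle. The one point I would be careful about is the direct-sum decomposition across blocks: I would make explicit that $\nu(\sigma)$ for $\sigma\in\Sigma_k^l$ is supported only on rows of color $k$ and columns of color $l$, so the supports for distinct $(k,l)$ are disjoint and no cancellation between blocks can occur. Granting that, the total dimension is simply the sum of the per-block dimensions and the rest is bookkeeping of index-set sizes.
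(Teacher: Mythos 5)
Your proposal is correct and follows essentially the same route as the paper: both count the basis $\cBB_k^l$ of Definition \mref{def:netbasis} as $1+\bar{n}_k n_l$ elements per color pair $(k,l)$ and sum over $k,l$ to get $\CC^2+\BB N=\BB^2+\BB\CC+\CC^2$. Your added remarks (the disjointness of block supports and the equal-column-sum characterization of each block) are sound and make explicit what the paper leaves implicit, but they do not change the argument.
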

	\begin{proof} For this proof, we look at Definition \ref{def:netbasis}. The term \(1\) corresponds to \(i=j=0\), the \(\bar{n}_{k}\) to the number of \(j\in\bar{R}_k\) and 
		\(n_l\) to the number of \(i\in R_l\):
		\bas
		\dim\mathfrak{net}_{\CC, N}=\sum_{k, l=1}^\CC \left( 1+\bar{n}_{k}n_{l}\right)=
		 \sum_{k=1}^\CC\left( \CC+\bar{n}_{k}N\right)=
		 \CC^2 + (N-\CC)N
		= \BB^2+ \BB \CC+\CC^2.
		\eas
	Notice that the dimension is minimal if \(\CC\approx N/2\), where \(\dim\mathfrak{net}_{\CC, N}\approx \frac{3}{4}\dim\mathfrak{gl}_N\).
 \end{proof}
	\begin{Remark}
		In the case that the number of colors \(\CC\) equals the dimension \(N\) (implying \(n_{i}=1\) for all \(i=1,\ldots,\CC\)), this reduces to 
	\bas
		\F{l,0}{k,0}&=&\vv{0}{l}\otimes\VV{0}{k}=\cu{l}\otimes\cU{k}.
	\eas
		There are \(N^2\) elements and \(\mathfrak{net}_{N,N}=\mathfrak{gl}_N\). 

	This indicates that the case \(\CC=N\) is a bit exceptional, even though it is the case we are most familiar with, that of the ordinary differential equations (or maps).
	This will show up when we compute the Levi decomposition of \(\mathfrak{net}_{\CC,N}\).
	\end{Remark}
  \begin{Example}\label{exm:2Cexa}
Let \(n_{1}=2\) and \(n_{2}=1\), as in Example \ref{exm:3x2} and \cite[Example 12.4]{rink2015coupled}, that is, \(N=3\) and \(\CC=2,\BB=1\), 
with \(\cCC=\{\vv{0}{1},\vv{0}{2}\} \) and \(\cBB=\{\vv{1}{1}\}\). \(R_{1}=\{0,1\}\), \(\bar{R}_1=\{1\}\),  \(R_2=\{0\}\).
	The coordinates are, following Algorithm \ref{alg:ff},
\bas
	x_1=\vv{0}{1}=\cu{1}, x_2=\vv{1}{1}=\bu{1}, x_3=\vv{0}{2}=\cu{2}.
\eas
Since we have
\bas
	\dot{x}_1&=&f^1(x_1,x_1,x_2,x_2),\\
	\dot{x}_2&=&f^1(x_2,x_1,x_1,x_2),\\
	\dot{x}_3&=&f^2(x_1,x_2;x_3),
\eas
	we find that \(\Delta_1=4\) and \(\Delta_2=3\) and, since
each \(\mathsf{U}^{\cc,k_j}_{i,l_j}\) denotes a list of coordinates with color \(k_j\) of length \(l_j\),
	\bas
	\mathsf{U}^{1,1}_{0,4}&=&[x_1,x_1,x_2,x_2]=[\vv{0}{1},\vv{0}{1},\vv{1}{1},\vv{1}{1}]=[\cu{1},\cu{1},\bu{1},\bu{1}],\\
	\mathsf{U}^{1,1}_{1,4}&=&[x_2,x_1,x_1,x_2]=[\vv{1}{1},\vv{0}{1},\vv{0}{1},\vv{1}{1}]=[\bu{1},\cu{1},\cu{1},\bu{1}],\\
	\mathsf{U}^{2,1}_{0,2}&=&[x_1,x_2]=[\vv{0}{1},\vv{1}{1}]=[\cu{1},\bu{1}],\\
	\mathsf{U}^{2,2}_{0,1}&=&[x_3]=[\vv{0}{2}]=[\cu{2}].
	\eas

 Now we apply Definition \ref{def:nu}, we find 
  \bas
 \Sigma^1_1&=&\{(0,0)^1_1,(0,1)^1_1,(1,0)^1_1,(1,1)^1_1\}, 
 \\
 \Sigma_1^2&=&\{(0)^2_1,(1)^2_1\},
 \\
 \Sigma_2^1&=&\{(0,0)^1_2\},
 \\
 \Sigma^2_2&=&\{(0)^2_2\}.
  \eas
	\bas
		\nu((0,0)^1_1)&=&
		\vv{0}{1}\otimes\VV{0}{1}+\vv{0}{1}\otimes\VV{1}{1}
		,\quad
		\nu((0,1)^1_1)=
		\vv{0}{1}\otimes\VV{0}{1}+\vv{1}{1}\otimes\VV{1}{1}
		,\\
		\nu((1,0)^1_1)&=&
		\vv{1}{1}\otimes\VV{0}{1}+\vv{0}{1}\otimes\VV{1}{1}
		,\quad
		\nu((1,1)^1_1)
		=\vv{1}{1}\otimes\VV{0}{1}+\vv{1}{1}\otimes\VV{1}{1}
		,\quad |\Sigma_1^1|=n_1^{n_1}=2^2=4,\\
		\nu((0)_1^2)&=&
		\vv{0}{1}\otimes\VV{0}{2}
		,\quad
		\nu((1)_1^2)=
		\vv{1}{1}\otimes\VV{0}{2}
		,\quad |\Sigma_1^2|=n_1^{n_2}=2^1=2,\\
		\nu((0,0)_2^1)&=&
		\vv{0}{2}\otimes\VV{0}{1}
		+\vv{0}{2}\otimes\VV{1}{1}
		,\quad |\Sigma_2^1|=n_2^{n_1}=1^2=1,\\
		\nu((0)^2_2)&=&
		\vv{0}{2}\otimes\VV{0}{2}
		,\quad \Sigma^2_2|=n_2^{n_2}=1^1=1.
	\eas
	where we reduce the number of cases from \(8=4+2+1+1\) to \(7=\CC^2+\BB\CC+\BB^2\) by choosing a basis for each \(\Sigma_k^l\).
	
	By applying Theorem \ref{thm:Basis}, the basis \(\cBB^2_2\) is given by 
	\bas
		\nu((0,0)^1_1)&=&
		\vv{0}{1}\otimes\VV{0}{1}+\vv{0}{1}\otimes\VV{1}{1},
	\\
		\nu((0,1)^1_1)&=&
		\vv{0}{1}\otimes\VV{0}{1}+\vv{1}{1}\otimes\VV{1}{1}
		,\\
		\nu((1,0)^1_1)&=&
		\vv{1}{1}\otimes\VV{0}{1}+\vv{0}{1}\otimes\VV{1}{1}.
	\eas
		Together with \(\Sigma_1^2, \Sigma_2^1\) and \(\Sigma_2^2\) this gives us the basis of all linear network maps in this case.
  We continue with this example in Section \ref{ex:n3c2}.
  \end{Example}
	\section{ {Subalgebra decomposition of $\mathfrak{net}_{\protect\CC,N}$}}\label{sec:xybasis}
	In this section, we define an invertible linear transformation of \(\mathbb{R}^N\), mapping each colored subspace \(\mathbb{R}^{n_\cc}, 1\leq \cc\leq\CC\) onto itself.
	After this transformation, it is not so difficult to define subalgebras of \(\mathfrak{net}_{\CC,N}\) and to determine their Lie algebraic type.
	We start our discussion with the definition of a Lie bracket on the \(2\)-tensors and corresponding representations.
	In the linear case, this comes down to the usual Lie bracket of square matrices and their action on (co)vectors,
	but the construction also applies to the nonlinear case, cf. Remark \ref{rem:nonlinear}.
	\begin{Definition}\label{def:liebracket}
		Let \(\mathsf{e}_i\) be a basis of \(\mathsf{V}\), and let the \(\mathsf{e}^j\) be the standard dual basis, that is, \(\mathsf{e}^j\cdot\mathsf{e}_i=\delta^j_i\). 
		Let \(\mathsf{x},
	\mathsf{y}\in \mathsf{V}\otimes\mathsf{V}^\star\).
		On the space of two-tensors we can define a product \(\trianglerighteq\) (matrix multiplication) by:
	\bas	\mathsf{e}_{i_1}\otimes\mathsf{e}^{i_2}\trianglerighteq\mathsf{e}_{j_1}\otimes\mathsf{e}^{j_2}
&=&(\mathsf{e}^{i_2}\cdot\mathsf{e}_{j_1})\,\mathsf{e}_{i_1}\otimes\mathsf{e}^{j_2}
	=\delta^{i_2}_{j_1}\mathsf{e}_{i_1}\otimes\mathsf{e}^{j_2}
	\eas
	and 
		\bas
		\mathsf{x}\trianglerighteq\mathsf{y}&=&
		\sum_{i_1,i_2} x^{i_1}_{i_2} \mathsf{e}_{i_1}\otimes\mathsf{e}^{i_2}\trianglerighteq
		\sum_{j_1,j_2}y^{j_1}_{j_2}\mathsf{e}_{j_1}\otimes\mathsf{e}^{j_2}
  \\
		&=&\sum_{i_1,i_2,j_1,j_2}x^{i_1}_{i_2}y^{j_1}_{j_2}(\mathsf{e}^{i_2}\cdot\mathsf{e}_{j_1})\mathsf{e}_{i_1}\otimes\mathsf{e}^{j_2}
		\\&=&\sum_{i_1,i_2,j_1,j_2}x^{i_1}_{i_2}y^{j_1}_{j_2}\delta^{i_2}_{j_1}\mathsf{e}_{i_1}\otimes\mathsf{e}^{j_2}
		=\sum_{i, j}\sum_k x^{i}_{k}y^{k}_{ j}\mathsf{e}_{i}\otimes\mathsf{e}^{j}.
	\eas
 \begin{Corollary}
	\(\tr{ \mathsf{x}\trianglerighteq \mathsf{y}}=\tr{\mathsf{y}\trianglerighteq\mathsf{x}}\).
 \end{Corollary}
	The {\bf associator } (cf. \cite{MR0161898})
		of \(\trianglerighteq\) is a 3-tensor \(\alpha(\mathsf{x},\mathsf{y},\mathsf{z})=\mathsf{x}\trianglerighteq(\mathsf{y}\trianglerighteq\mathsf{z})-(\mathsf{x}\trianglerighteq\mathsf{y})\trianglerighteq\mathsf{z}\).
		This is also written as \([\mathsf{x},\mathsf{y},\mathsf{z}]\) in the literature. 
  The concept was introduced to show that associativity of the product was not a necessary condition to prove the Jacobi identity for Lie algebras.
  The proof of the following Lemma
is left out since it  is a matter
		of writing out the definitions and some trivial mathematics.
	\end{Definition}
	
 \begin{Lemma}\label{lem:assoc_lie}
    The associator of $\trianglerighteq$ is symmetric in its second and third arguments. Additionally, the bracket defined by $[\mathsf{x},\mathsf{y}]=\mathsf{x}\trianglerighteq\mathsf{y} - \mathsf{y}\trianglerighteq\mathsf{x}$ is antisymmetric and,
    by the symmetry of the associator, satisfies the Jacobi identity. In other words, it defines a Lie algebra, which is isomorphic to $\mathfrak{gl}(\mathsf{V})$.
\end{Lemma}
\begin{Corollary}
	\(\tr{ [\mathsf{x},\mathsf{y}]}=0\).
 \end{Corollary}\begin{Remark}\label{rem:nonlinear}
		The use of the associator is a bit of overkill in the linear case, where associativity is clear, but will prove useful when we consider nonlinear vector fields, replacing \(\mathsf{V}\otimes\mathsf{V}^\star\)
		by \(\mathsf{S(V)}\otimes\mathsf{V}^\star\), the space of polynomial vector fields, extending the action of \(\mathsf{V}^\star\) to \(\mathsf{S(V)}\) by derivation. 
	\end{Remark}
	\begin{Definition}\label{def:xaction}
		We define a representation (cf. Definition \ref{def:liebracket}) of \(\mathsf{V}\otimes\mathsf{V}^\star\) on \(\mathsf{V}\) and \(\mathsf{V}^\star\) as follows:
	\bas
		\left\{ \begin{matrix}
			\mathsf{e}_{i_1}\otimes\mathsf{e}^{i_2}\triangleright\mathsf{e}_j&=&(\mathsf{e}^{i_2}\cdot\mathsf{e}_{j})\ \mathsf{e}_{i_1}=\delta^{i_2}_j\ \mathsf{e}_{i_1},
			\\
			\mathsf{e}_{i_1}\otimes\mathsf{e}^{i_2}\triangleright\mathsf{e}^j&=&-(\mathsf{e}^{j}\cdot\mathsf{e}_{i_1})\ \mathsf{e}^{i_2}=-\delta^j_{i_1}\ \mathsf{e}^{i_2} ,
		\end{matrix}\right.
		\eas
	in other words, a matrix multiplying a (co)vector.
	\end{Definition}
 \begin{Corollary}
     If we now act in the usual algebra fashion with \(\mathsf{e}_{i_1}\otimes\mathsf{e}^{i_2}\) on
     \(\mathsf{e}_{j_1}\otimes\mathsf{e}^{j_2}\), that is,
     \bas
     \mathsf{e}_{i_1}\otimes\mathsf{e}^{i_2}\triangleright(\mathsf{e}_{j_1}\otimes\mathsf{e}^{j_2})&=&
     (\mathsf{e}_{i_1}\otimes\mathsf{e}^{i_2}\triangleright\mathsf{e}_{j_1})\otimes\mathsf{e}^{j_2}
     +\mathsf{e}_{j_1}\otimes(\mathsf{e}_{i_1}\otimes\mathsf{e}^{i_2}\triangleright\mathsf{e}^{j_2})
     \\&=&\delta^{i_2}_{j_1} \mathsf{e}_{i_1}\otimes\mathsf{e}^{j_2}
     -\delta_{i_1}^{j_2}\mathsf{e}_{j_1}\otimes\mathsf{e}^{i_2}
     \\&=&[\mathsf{e}_{i_1}\otimes\mathsf{e}^{i_2},\mathsf{e}_{j_1}\otimes\mathsf{e}^{j_2}],
     \eas
     and we recover the Lie bracket.
 \end{Corollary}
	In Algorithm \ref{alg:xbasis} we formulate the crucial step to obtain the subalgebra decomposition. It is motivated by the choice of the basis in Definition \ref{def:netbasis}.

The exponential formula will be used in the examples in Section \ref{sec:examples}, since it allows us to compute the Jacobi-matrix, given in terms of the \(\vv{}{}\)-coordinates, to one given in \(\xx{}{}\)-coordinates,
and then in \(\cx{},\bx{}\)-coordinates (as defined in Algorithm \ref{alg:ff}), to finally arrive at the \(\FA{}{},\FB{}{},\FC{}{}\) notation to be introduced next.

\begin{Definition}
\label{def:xbasis}
	Let \(k\) and \(l\) be colors and  \(1\leq j\leq \bar{n}_k\) and \( 1\leq i\leq \bar{n}_l\). Then we define,
 after applying Algorithm \ref{alg:xbasis}, the following subalgebras (cf. Lemma \ref{lem:iso}):
 
		\begin{itemize}
			\item The space spanned by the \(\GA{k,j}{l,0}=\xx{j}{k}\otimes\XX{0}{l}\) will be denoted by \(\sFA{\CC}{\BB}\subset\mathfrak{net}_{\CC,N}\).
			\item The space spanned by the \(\GB{k,j}{l,i}=\xx{j}{k}\otimes\XX{i}{l}\) will be denoted by \(\sFB{}{\BB}\subset\mathfrak{net}_{\CC,N}\).
			\item The space spanned by the \(\GC{k,0}{l,0}=\xx{0}{k}\otimes\XX{0}{l}\)  will be denoted by \(\sFC{\CC}{}\subset\mathfrak{net}_{\CC,N}\).
			\item The space spanned by the \(\GO{k,0}{l,i}=\xx{0}{k}\otimes\XX{i}{l}\) will be denoted by \(\sFO{\BB}{\CC}\subset\mathfrak{gl}_{N}\).
		\end{itemize}

  It will turn out that \(\sFA{\CC}{\BB}\) is contained in the solvable part of the Lie algebra \(\mathfrak{net}_{\CC,N}\). The space \(\sFO{\BB}{\CC}\) is the complement of
		\(\mathfrak{net}_{\CC,N}\) in \(\mathfrak{gl}_N\), that is, \(\mathfrak{gl}_N=\mathfrak{net}_{\CC,N}\oplus\sFO{\BB}{\CC}\).
	\begin{Lemma}\label{lem:iso}
		The \(\FA{}{},\FB{}{},\FC{}{}\) form the span of the linear network vector fields:
		\bas
		\F{l,0}{k,0}(\vv{}{})&=&\GC{k,0}{l,0},\\
		\F{l,i}{k,j}(\vv{}{})&=&\GC{k,0}{l,0}+\GB{k,j}{l,i},\\
		\F{l,0}{k,j}(\vv{}{})&=&\GC{k,0}{l,0}+\GA{k,j}{l,0}-\sum_{i=1}^{\bar{n}_k} \GB{k,j}{l,i}.
		\eas
		One checks that this relation is invertible:
\bas
        \GC{k,0}{l,0}&=&\F{l,0}{k,0}(\vv{}{}),\\
		\GB{k,j}{l,i}&=&\F{l,i}{k,j}(\vv{}{})-\F{l,0}{k,0}(\vv{}{}),\\
		\GA{k,j}{l,0}&=&\sum_{i=0}^{\bar{n}_k} (\F{l,i}{k,j}(\vv{}{})-\F{l,0}{k,0}(\vv{}{})).
\eas
  It follows that the traces are consistent with what one would expect from the definitions:
 \bas
  	\tr{\GC{k,0}{l,0}}&=&\tr{\F{l,0}{k,0}(\vv{}{})}=\delta^l_k=\XX{0}{l}\cdot\xx{0}{k},\\
		\tr{\GB{k,j}{l,i}}&=&\tr{\F{l,i}{k,j}(\vv{}{})}-\tr{\F{l,0}{k,0}(\vv{}{})}=\delta^k_l\delta^i_j=\XX{i}{l}\cdot\xx{j}{k},\\
		\tr{\GA{k,j}{l,0}}&=&\sum_{i=0}^{\bar{n}_k} (\tr{\F{l,i}{k,j}(\vv{}{})}-\tr{\F{l,0}{k,0}(\vv{}{}))}=
  \sum_{i=0}^{\bar{n}_k}( \delta^l_k(1+\delta^i_j-\delta^i_0)-\delta^l_k)
  \\&=&\delta^l_k\sum_{i=0}^{\bar{n}_k}(\delta^i_j-\delta^i_0)=0=\XX{0}{l}\cdot\xx{j}{k}.
  \eas
  Combined with {Theorem \ref{thm:Basis}} this allows us to express \(\nu(\sigma)\) in terms of 
		\(\GA{}{}, \GB{}{}\) and \(\GC{}{}\) explicitly.
\end{Lemma}
\begin{proof}
		Recall that for \(1\leq k,l\leq\CC\),
	\bas
		\F{l,0}{k,0}(\vv{}{})&=&\vv{0}{k}\otimes\sum_{q=0}^{\bar{n}_l} \VV{q}{l},\\
		\F{l,0}{k,j}(\vv{}{})&=&\F{l}{k,0}(\vv{}{})+(\vv{j}{k}-\vv{0}{k})\otimes\VV{0}{l}, \quad j\in \bar{R}_k,\\
		\F{l,i}{k,j}(\vv{}{})&=&\F{l}{k,0}(\vv{}{})+(\vv{j}{k}-\vv{0}{k})\otimes\VV{i}{l}, \quad i\in \bar{R}_l, j\in \bar{R}_k.
	\eas
		Then, using \(\exp(\mathsf{w}\trianglerighteq)(\mathsf{x}\otimes\mathsf{y})=\exp(\mathsf{w}\triangleright)\mathsf{x}\otimes \exp(\mathsf{w}\triangleright)\mathsf{y}\),
		\bas
		\F{l,0}{k,0}(\vv{}{})&=& 
		\vv{0}{k}\otimes\VV{0}{l}+\sum_{i=1}^{\bar{n}_l} \vv{0}{k}\otimes\VV{i}{l}
		=\xx{0}{k}\otimes(\XX{0}{l}-	\sum_{i=1}^{\bar{n}_l} \XX{i}{l})+\sum_{i=1}^{\bar{n}_l} \xx{0}{k}\otimes\XX{i}{l}
        =\xx{0}{k}\otimes\XX{0}{l}=\GC{k,0}{l,0},
		\\\F{l,i}{k,j}(\vv{}{})&=&
		\F{l,0}{k,0}(\vv{}{})+(\vv{j}{k}-\vv{0}{k})\otimes\VV{i}{l}
		=\GC{k,0}{l,0}+\xx{j}{k}\otimes\XX{i}{l}=\GC{k,0}{l,0}+\GB{k,j}{l,i},\quad i\in \bar{R}_l, j\in \bar{R}_k,
    \\\F{l,0}{k,j}(\vv{}{})&=&
		\F{l}{k,0}(\vv{}{})+(\vv{j}{k}-\vv{0}{k})\otimes\VV{0}{l} 
		=\GC{k,0}{l,0}+\xx{j}{k}\otimes(\XX{0}{l}-\sum_{i=1}^{\bar{n}_l} \XX{i}{l})=\GC{k,0}{l,0}+\GA{k,j}{l,0}-\sum_{i=1}^{\bar{n}_l}\GB{k,j}{l,i} ,\quad j\in \bar{R}_l,
  \eas
  and this proves the Lemma.
	\end{proof}
	\end{Definition}
  %
	\begin{Theorem}\label{thm:abc}
		\(\mathfrak{net}_{\CC,N}=\sFA{\CC}{\BB}\oplus\sFB{}{\BB}\oplus\sFC{\CC}{}\) and \(\mathfrak{gl}_N=\mathfrak{net}_{\CC,N}\oplus\sFO{\BB}{\CC}\)
		{\em {\em (}as vector spaces{\em)}}.
	\end{Theorem}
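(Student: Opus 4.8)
The plan is to read off both decompositions from a single observation: after the change of basis in Algorithm \mref{alg:xbasis}, the standard basis of \(\mathfrak{gl}_N\) splits into exactly four families indexed by whether each of the two tensor slots carries a color index (\(0\)) or a cocolor index (\(\geq 1\)), and these four families are precisely \(\sFC{}{}, \sFA{}{}, \sFB{}{}\) and \(\sFO{}{}\).

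First I would record that \(\{\xx{i}{k}: 1\leq k\leq\CC,\ i\in R_k\}\) is a basis of \(\mathbb{R}^N\) and, by Corollary \mref{cor:dualxy}, that \(\{\XX{i}{l}\}\) is its standard dual basis; invertibility is guaranteed by the explicit inverse given in Algorithm \mref{alg:xbasis}. Consequently the two-tensors \(\xx{i}{k}\otimes\XX{i'}{l}\), ranging over all colors \(k,l\) and positions \(i\in R_k\), \(i'\in R_l\), form a basis of \(\mathbb{R}^N\otimes\mathbb{R}^{N\star}=\mathfrak{gl}_N\). Partitioning this basis according to the two conditions ``\(i=0\)'' and ``\(i'=0\)'' yields the four disjoint families \(\sFC{}{}\) (\(i=i'=0\)), \(\sFA{}{}\) (\(i\geq 1,\ i'=0\)), \(\sFB{}{}\) (\(i\geq 1,\ i'\geq 1\)) and \(\sFO{}{}\) (\(i=0,\ i'\geq 1\)) of Definition \mref{def:xbasis}. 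Being disjoint subsets of one basis, each family is linearly independent and the four spans meet only in \(0\), so as vector spaces \(\mathfrak{gl}_N=\sFC{}{}\oplus\sFA{}{}\oplus\sFB{}{}\oplus\sFO{}{}\).

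It then remains to identify which part of this four-fold splitting is \(\mathfrak{net}_{\CC,N}\). By Theorem \mref{thm:Basis} the space \(\mathfrak{net}_{\CC,N}\) is spanned by the \(\F{l,i}{k,j}\), and Lemma \mref{lem:iso} expresses each such generator (in the \(\xx{}{}\)-basis) purely as a combination of elements of \(\sFA{}{}\), \(\sFB{}{}\) and \(\sFC{}{}\), with no \(\sFO{}{}\)-component appearing; moreover that relation is invertible. Hence \(\Span\{\F{l,i}{k,j}(\xx{}{})\}=\sFA{}{}\oplus\sFB{}{}\oplus\sFC{}{}\), which is the first identity \(\mathfrak{net}_{\CC,N}=\sFA{}{}\oplus\sFB{}{}\oplus\sFC{}{}\). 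Substituting this into the splitting of the previous paragraph immediately gives \(\mathfrak{gl}_N=\mathfrak{net}_{\CC,N}\oplus\sFO{}{}\). As a consistency check one compares dimensions: \(\dim(\sFA{}{}\oplus\sFB{}{}\oplus\sFC{}{})=\BB\CC+\BB^2+\CC^2\) agrees with Lemma \mref{lem:dim}, while \(\dim\sFO{}{}=\CC\BB\) makes the total \((\BB+\CC)^2=N^2=\dim\mathfrak{gl}_N\).

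The computations are routine; the only real content is the bookkeeping behind the first paragraph, so the main thing to get right is that \(\{\xx{i}{k}\otimes\XX{i'}{l}\}\) is genuinely a basis (hence that the four index-type classes are honestly disjoint and exhaustive) and that Lemma \mref{lem:iso} truly introduces no \(\sFO{}{}\) terms. I would also treat the degenerate case \(\CC=N\) separately: then \(\BB=0\), the transformation is the identity, \(\sFA{}{}=\sFB{}{}=\sFO{}{}=0\), and the statement collapses to \(\mathfrak{net}_{N,N}=\sFC{}{}=\mathfrak{gl}_N\).
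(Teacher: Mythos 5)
Your proof is correct and takes essentially the same route as the paper's, whose entire argument is ``See Lemma \mref{lem:iso} and check that the dimension is correct.'' You have simply made explicit the two facts the paper leaves implicit: that the tensors \(\xx{i}{k}\otimes\XX{i'}{l}\) form a basis of \(\mathfrak{gl}_N\) (via Corollary \mref{cor:dualxy}), so the four index-type families really give a direct-sum splitting, and that the invertible change of basis in Lemma \mref{lem:iso} involves no \(\sFO{}{}\)-terms.
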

	\begin{proof}
		See Lemma \ref{lem:iso} and check that the dimension is correct.
	\end{proof}

	

	\section{The Lie algebra $\mathfrak{net}_{\protect\CC,N}$}\label{sec:Lie}
	We are almost done now that the notation and definitions are fixed. 
	The problem is reduced to simple matrix calculations to compute the structure constants, observe that they are familiar-looking
	, and then draw our conclusions. 
We will use the indices as defined in Definition \ref{def:xynotation} to simplify writing out the proofs.
Although simple to obtain, the computation of the structure constants will come in handy when we come to the dual pair in \S \ref{sec:dualpair}.

\subsection{The Lie subalgebras $\sFB{\BB}{}$ and $\sFC{\CC}{}$}
 \begin{Definition}\label{def:xy}
     We define $\cx{k}=\xx{0}{k}$, $\cX{k}=\XX{0}{k}$, and $\bx{l}=\xx{i}{k}$, $\bX{l}=\XX{i}{k}$, where $i\in\bar{R}_k$ and $l$ is the position of $\xx{i}{k}$ in the list $\cBB$.
 \end{Definition}
 \begin{Notation}\label{not:xy}
		The elements in the space spanned by the \(\GC{k,0}{l,0}=\xx{0}{k}\otimes\XX{0}{l}\) will be denoted by \(\FC{k}{l}=\cx{k}\otimes\cX{l}, 1\leq k,l\leq \CC\).
	\end{Notation}
 \begin{Lemma}\label{lem:merged}
        Let $\sFC{\CC}{}$ be the Lie algebra spanned by the $\FC{\cc_1}{\cc_2}$, $0<\cc_i\leq \CC$ as defined in Notation \ref{not:xy}, with Lie bracket as defined in Definition \ref{def:liebracket}. 
         \begin{enumerate}
         \item\label{lem:calgebra}
        Then the structure constants are given by:
        \(
        [\FC{\cc_1}{\cc_2},\FC{\cc_3}{\cc_4}]=\delta^{\cc_2}_{\cc_3} \FC{\cc_1}{\cc_4}-\delta^{\cc_4}_{\cc_1} \FC{\cc_3}{\cc_2}
        \)
        and $\sFC{\CC}{}\simeq\mathfrak{gl}_\CC$.
        \item\label{lem:Chom}
        Let $\dCC=\sum_{\cc=1}^{\CC}\FC{\cc}{\cc}$. Then $[\dCC,\FC{\cc_1}{\cc_2}]=0$.
    \end{enumerate}
    \end{Lemma}
    \begin{proof}We prove
        \begin{itemize}
            \item[\eqref{lem:calgebra}]
            \(
            [\FC{\cc_1}{\cc_2},\FC{\cc_3}{\cc_4}]=[\cx{\cc_1}\otimes\cX{\cc_2},\cx{\cc_3}\otimes\cX{\cc_4}]=\delta^{\cc_2}_{\cc_3} \cx{\cc_1}\otimes\cX{\cc_4} -\delta^{\cc_4}_{\cc_1} \cx{\cc_3}\otimes\cX{\cc_2}=\delta^{\cc_2}_{\cc_3} \FC{\cc_1}{\cc_4}-\delta^{\cc_4}_{\cc_1} \FC{\cc_3}{\cc_2}
            \),
            
            \item[\eqref{lem:Chom}]
            \(
            [\sum_{\cc=1}^{\CC}\FC{\cc}{\cc},\FC{\cc_1}{\cc_2}]=\sum_{\cc=1}^{\CC}( \delta^{\cc}_{\cc_1} \FC{\cc}{\cc_2}-\delta^{\cc_2}_{\cc} \FC{\cc_1}{\cc})=\FC{\cc_1}{\cc_2}-\FC{\cc_1}{\cc_2}=0
            \).
               
    \end{itemize}
    \end{proof}

	\begin{Notation}
		The elements in the space spanned by the \(\GB{l,j}{k,i}=\xx{j}{l}\otimes\XX{i}{k}\) will be denoted by \(\bx{\bb_1}\otimes\bX{\bb_2}=\FB{\bb_1}{\bb_2}, 1\leq \bb_1,\bb_2\leq \BB\).
	\end{Notation}
	\begin{Lemma}\label{lem:Balgebra}
		Let \(\sFB{}{\BB}\) be the Lie subalgebra spanned by the \(\FB{\bb_1}{\bb_2}, 0<\bb_i\leq \BB\) as defined in Definition \ref{def:xbasis}, with Lie bracket 
		as defined in Definition \ref{def:liebracket}.
  \begin{enumerate}
      \item 	Then the structure constants are given by 
			\(
			[\FB{\bb_1}{\bb_2},\FB{\bb_3}{\bb_4}] =\delta^{\bb_2}_{\bb_3}\FB{\bb_1}{\bb_4} -\delta^{\bb_4}_{\bb_1}\FB{\bb_3}{\bb_2},
			\)
		and \(\sFB{}{\BB}\simeq\mathfrak{gl}_\BB\). Furthermore, \([\sFB{}{\BB},\sFC{\CC}{}]=0\).
  \item\label{lem:Bhom} Let \(\dBB=\sum_{\bb=1}^{\BB} \FB{\bb}{\bb}\). 
  Then,
  \([\dBB,\sFB{}{\BB}] =0\).
    \end{enumerate}
	\end{Lemma}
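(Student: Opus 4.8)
The plan is to compute the bracket directly, exactly as in Lemma~\mref{lem:calgebra}, exploiting that the $\xx{}{}$ and $\XX{}{}$ form a dual pair of bases (Corollary~\mref{cor:dualxy}). Writing $\FB{\bb_1}{\bb_2}=\bx{\bb_1}\otimes\bX{\bb_2}$ and using the product $\trianglerighteq$ of Definition~\mref{def:liebracket}, one has $\bx{\bb_1}\otimes\bX{\bb_2}\trianglerighteq\bx{\bb_3}\otimes\bX{\bb_4}=(\bX{\bb_2}\cdot\bx{\bb_3})\,\bx{\bb_1}\otimes\bX{\bb_4}=\delta^{\bb_2}_{\bb_3}\,\FB{\bb_1}{\bb_4}$, where the contraction $\bX{\bb_2}\cdot\bx{\bb_3}=\delta^{\bb_2}_{\bb_3}$ is precisely the duality of Corollary~\mref{cor:dualxy}. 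Antisymmetrizing via Lemma~\mref{lem:ass} yields the claimed structure constants $[\FB{\bb_1}{\bb_2},\FB{\bb_3}{\bb_4}]=\delta^{\bb_2}_{\bb_3}\FB{\bb_1}{\bb_4}-\delta^{\bb_4}_{\bb_1}\FB{\bb_3}{\bb_2}$. Since the right-hand side is again a combination of $\FB{}{}$'s, the span is automatically closed under the bracket; and as these are exactly the relations of the matrix units $E_{\bb_1\bb_2}$, the assignment $\FB{\bb_1}{\bb_2}\mapsto E_{\bb_1\bb_2}$ is a Lie algebra isomorphism onto $\mathfrak{gl}_\BB$.

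For the commutativity $[\sFB{}{\BB},\sFC{\CC}{}]=0$ I would compute $[\FB{\bb_1}{\bb_2},\FC{\cc_1}{\cc_2}]$ in the same manner. The two $\trianglerighteq$-products produce the contractions $\bX{\bb_2}\cdot\cx{\cc_1}$ and $\cX{\cc_2}\cdot\bx{\bb_1}$. Recalling (as set up in the proof of Lemma~\mref{lem:calgebra}) that $\cx{\cc}=\xx{0}{\cc}$ and $\cX{\cc}=\XX{0}{\cc}$ carry the index $0$, whereas $\bx{\bb}=\xx{i}{k}$ and $\bX{\bb}=\XX{i}{k}$ carry an index $i\in\bar{R}_k$, i.e.\ $i\geq 1$, each of these contractions pairs an index-$0$ covector against an index-$\geq 1$ vector (or vice versa) and therefore vanishes by Corollary~\mref{cor:dualxy}. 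Hence both $\trianglerighteq$-products are zero and the bracket vanishes.

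The computations are routine; the only genuine content — and the reason the color/cocolor split is so effective — is the observation above: under the pairing $\cdot$ the index-$0$ directions $\xx{0}{}$ never connect to the index-$\geq 1$ directions $\xx{i}{}$, so the ``color'' and ``cocolor'' slots are orthogonal. This orthogonality, manufactured by the transformation of Algorithm~\mref{alg:xbasis}, is exactly what decouples $\sFB{}{\BB}$ from $\sFC{\CC}{}$. I anticipate no real obstacle; one only needs to keep straight, across the two color blocks, which index is $0$ and which is positive.
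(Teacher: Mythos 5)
Your proof is correct and follows essentially the same route as the paper, which simply reduces Lemma \mref{lem:Balgebra} to the computation of Lemma \mref{lem:calgebra} with \(\cx{}\) replaced by \(\bx{}\), and obtains \([\sFB{}{\BB},\sFC{\CC}{}]=0\) from the duality in Corollary \mref{cor:dualxy} (cf.\ Corollary \mref{cor:comeh}). Your explicit observation that the index-\(0\) and index-\(\geq 1\) slots are orthogonal under the pairing is exactly the mechanism the paper relies on, just spelled out in more detail.
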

 
	\begin{proof} We prove
 \begin{enumerate}
     
	\item See the proof of Lemma  \ref{lem:merged}, item \ref{lem:calgebra}, replacing \(\cx{}\) by \(\bx{}\).
  \item See the proof of Lemma \ref{lem:merged}, item \ref{lem:Chom}, replacing \(\FC{}{}\) by \(\FB{}{}\).
  \end{enumerate}
  \end{proof}
  
	\begin{Theorem}\label{thm:structure}
We have now shown that:
\begin{enumerate}
\item The Lie subalgebra \(\FC{}{}\subset\mathfrak{net}_{\CC,N}\) is
	isomorphic as a Lie algebra to \(\mathfrak{gl}_\CC(\mathbb{R})\), with \(\dCC\) as the identity matrix. 
		The Lie subalgebra \(\hat{\sFC{\CC}{}}=\sFC{\CC}{}/\langle\dCC\rangle\simeq\mathfrak{sl}_\CC\) will turn out to be part of the semisimple
	component of \(\mathfrak{net}_{\CC,N}\).
	In the case \(\CC=N\), this gives a complete description of the network Lie algebra.
    \item The Lie subalgebra \(\sFB{\BB}{}\) of \(\mathfrak{net}_{\BB,N}\)  is
	isomorphic as a Lie algebra to \(\mathfrak{gl}_{\BB}(\mathbb{R})\), with \(\dBB\) as the identity matrix. 
		The Lie subalgebra \(\hat{\sFB{}{\BB}}=\sFB{}{\BB}/\langle\dBB\rangle\simeq\mathfrak{sl}_{\BB}\) will turn out to be part of the semisimple
	component of \(\mathfrak{net}_{\CC,N}\).
\end{enumerate}  
\end{Theorem}
	\begin{Corollary}\label{cor:comeh}
		It follows from  Corollary \ref{cor:dualxy} that \([\FC{}{},\FB{}{}]=0\); 
			we show in \S \ref{sec:dualpair} that \({\sFC{\CC}{}}\oplus\langle\dBB\rangle\) and \({\sFB{}{\BB}}\oplus\langle\dCC\rangle\) form a dual pair.
	\end{Corollary}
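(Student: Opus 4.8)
The statement has two parts: the commutation $[\FC{}{},\FB{}{}]=0$, which can be settled now, and the dual-pair assertion, which is explicitly postponed to \S\ref{sec:dualpair}. The plan for the commutation is a direct computation with the matrix product $\trianglerighteq$ of Definition \ref{def:liebracket}, where the only input beyond bookkeeping is the duality recorded in Corollary \ref{cor:dualxy}.

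First I would take generic spanning elements $\FC{\cc_1}{\cc_2}=\cx{\cc_1}\otimes\cX{\cc_2}$ of $\sFC{\CC}{}$ and $\FB{\bb_1}{\bb_2}=\bx{\bb_1}\otimes\bX{\bb_2}$ of $\sFB{}{\BB}$, recalling that the color slots carry the index $0$ (so $\cx{k}=\xx{0}{k}$, $\cX{k}=\XX{0}{k}$) while each cocolor slot carries an index $i\in\bar{R}_k$ with $i>0$ (so $\bx{l}=\xx{i}{k}$, $\bX{l}=\XX{i}{k}$). Expanding the bracket through $\trianglerighteq$ gives
\bas
[\FC{\cc_1}{\cc_2},\FB{\bb_1}{\bb_2}]&=&(\cX{\cc_2}\cdot\bx{\bb_1})\,\cx{\cc_1}\otimes\bX{\bb_2}-(\bX{\bb_2}\cdot\cx{\cc_1})\,\bx{\bb_1}\otimes\cX{\cc_2}.
\eas
Both scalar contractions vanish: by Corollary \ref{cor:dualxy} the $\XX{\cdot}{\cdot}$ are dual to the $\xx{\cdot}{\cdot}$ across all colors, so $\cX{\cc_2}\cdot\bx{\bb_1}=\XX{0}{\cc_2}\cdot\xx{i}{k}$ pairs an upper index $0$ against a lower index $i>0$ and is therefore $0$, and symmetrically $\bX{\bb_2}\cdot\cx{\cc_1}=0$. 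Since these brackets span $[\FC{}{},\FB{}{}]$ by bilinearity, the commutation follows.

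I expect essentially no obstacle in this half: the entire content is that a color covector annihilates a cocolor vector and conversely, which is just the off-diagonal block of the duality and holds irrespective of which colors $\cc_i,\bb_j$ occur. The real work sits in the deferred dual-pair claim, which I would handle in \S\ref{sec:dualpair} by showing that $\sFC{\CC}{}\oplus\langle\dBB\rangle$ and $\sFB{}{\BB}\oplus\langle\dCC\rangle$ are mutual centralizers, not merely mutually commuting. One inclusion is immediate: the commutation just proved, together with Lemmas \ref{lem:Chom} and \ref{lem:Bhom} placing $\dCC$ and $\dBB$ centrally in their factors, shows each factor centralizes the other. The reverse inclusion is the crux. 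Writing a candidate central element as $A+B+C$ with $A\in\sFA{}{}$, $B\in\sFB{}{\BB}$, $C\in\sFC{\CC}{}$, the $\mathfrak{gl}_\BB$-structure of Lemma \ref{lem:Balgebra} forces $B\in\langle\dBB\rangle$ while Lemma \ref{lem:Chom} leaves $C$ unconstrained, so the whole difficulty reduces to showing that the bracket of a nonzero $\sFA{}{}$-element with $\sFB{}{\BB}$ (or with $\dCC$) cannot vanish. That I would settle using the remaining structure constants involving $\sFA{}{}$, thereby forcing $A=0$ and pinning the centralizer down to exactly the claimed complementary factor.
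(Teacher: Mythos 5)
Your proof is correct and is essentially the paper's own argument: the first half of the corollary is exactly the observation that, by the standard duality of Corollary \mref{cor:dualxy}, the contractions $\cX{\cc}\cdot\bx{\bb}$ and $\bX{\bb}\cdot\cx{\cc}$ pair an upper index $0$ against a lower index $i>0$ (and vice versa) and hence vanish, so both terms of $[\FC{\cc_1}{\cc_2},\FB{\bb_1}{\bb_2}]$ are zero. The dual-pair assertion is deferred in the paper as well, to Theorem \mref{thm:dualpair}, whose proof follows the centralizer computation you sketch.
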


	\subsection{The Lie subalgebra $\sFA{\CC}{\BB}$}
  In this lemma, we establish key properties of subalgebras $\sFA{\CC}{\BB}$, $\sFB{}{\BB}$, and $\sFC{\CC}{}$, including their commutation relations and dimensions. 
	\begin{Notation}
		The elements in the space spanned by the \(\GA{l,j}{k}=\xx{j}{l}\otimes\XX{0}{k}\) will be denoted by 
	\(\bx{\bb_1}\otimes\cX{k}=\FA{\bb_1}{k}, 
		1\leq \bb_1\leq \BB,
		1\leq k\leq \CC\). 
	\end{Notation}

 	\begin{Lemma}
 \label{lem:combined}
		The following statements hold:
		\begin{enumerate}
			\item\label{lem:AA} The subalgebra $\sFA{\CC}{\BB}$ is abelian, that is, $[\sFA{\CC}{\BB},\sFA{\CC}{\BB}]=0$.
			\item \label{rem:BFA} Structure constants are given by $[\FB{\bb_1}{\bb_2},\FA{\bb_3}{\cc_1}]=\delta^{\bb_2}_{\bb_3}\FA{\bb_1}{\cc_1}$. We let $\FA{}{\cc_1}$ be the space spanned by $\FA{\bb_1}{\cc_1}$, then each $\FA{}{\cc_1}$ is a standard representation of $\sFB{}{\BB}$.
			\item \label{rem:CFA} Structure constants are given by $[\FC{\cc_1}{\cc_2},\FA{\bb_1}{\cc_3}]=-\delta^{\cc_3}_{\cc_1}\FA{\bb_1}{\cc_2}$, that is, if we let $\FA{\bb_1}{}$ be the space spanned by $\FA{\bb_1}{\cc_1}$, then $\FA{\bb_1}{}$ is a standard representation of $\sFC{\CC}{}$.
			\item \label{lem:commCA}  $[\dCC,\FA{\bb_1}{\cc_1}]=-\FA{\bb_1}{\cc_1}$.
			\item \label{cor:dima} Its dimension is $\dim\sFA{\CC}{\BB}=\sum_{i=1}^{\CC} \sum_{j=1}^{\BB}1= \BB\CC$.
			\item \label{lem:comba} $[\dBB,\FA{\bb_1}{\cc_1}]=\FA{\bb_1}{\cc_1}$.
			\item Let $\dI=\dCC+\dBB$. Then $[\dI,\mathfrak{net}_{\CC,N}]=0$, that is, $\dI\in\mathcal{Z}(\mathfrak{net}_{\CC,N})$.
		\end{enumerate}
	\end{Lemma}
	
 \begin{proof}
    We prove the following items:
\begin{itemize}
\item[\eqref{rem:BFA}]
  \(
    [\FB{\bb_1}{\bb_2},\FA{\bb_3}{\cc_1}]=[\bx{\bb_1}\otimes \bX{\bb_2} , \bx{\bb_3}\otimes\cX{\cc_1}]=
    (\bX{\bb_2}\cdot\bx{\bb_3}) \bx{\bb_1}\otimes \cX{\cc_1}=\delta^{\bb_2}_{\bb_3}\FA{\bb_1}{\cc_1}
    \),

\item[\eqref{rem:CFA}]
    \(
    [\FC{\cc_1}{\cc_2},\FA{\bb_1}{\cc_3}]=[\cx{\cc_1}\otimes\cX{\cc_2},\bx{\bb_1}\otimes\cX{\cc_3}]=-\delta^{\cc_3}_{\cc_1}\FA{\bb_1}{\cc_2}
    \),
\item[\eqref{lem:commCA}]
    \(
    [\dCC,\FA{\bb_1}{\cc_1}]=\sum_{\cc=1}^{\CC}[\FC{\cc}{\cc},\FA{\bb1}{\cc_1}]
    =-\sum_{\cc=1}^{\CC} \delta^{\cc_1}_{\cc}\FA{\bb_1}{\cc}=-\FA{\bb_1}{\cc_1}
    \),

\item[\eqref{lem:comba}]
    \(
    [\dBB,\FA{\bb_1}{\cc_1}]=\sum_{\bb=1}^{\BB}[\FB{\bb}{\bb},\FA{\bb_1}{\cc_1}]=
    \sum_{\bb=1}^{\BB}\delta^{\bb}_{\bb_1}\FA{\bb}{\cc_1}=\FA{\bb_1}{\cc_1}
    \).
\end{itemize}
\end{proof}

	\subsection{The Lie subalgebra $\sFO{\BB}{\CC}$}
  \begin{Notation}
		The elements in the space spanned by the \(\GO{k,0}{l,i}=\xx{0}{k}\otimes\XX{i}{l}\) will be denoted by 
	\(\bx{k}\otimes\cX{\bb_1}=\FO{k}{\bb_1}, 
		1\leq \bb_1\leq \BB,
		1\leq k\leq \CC\). 
	The space spanned by the \(\FO{k}{\bb_1}\) will be denoted by \(\sFO{\BB}{\CC}\subset\mathfrak{gl}_{N}\).
 \end{Notation}
 
  The following lemma is the dual of   Lemma \ref{lem:combined}.
 \begin{Lemma}
    The following statements hold:
    
    \begin{enumerate}
        \item\label{lem:aa1} The subalgebra \(\sFO{\CC}{\BB}\) is abelian, that is, \([\sFO{\CC}{\BB},\sFO{\CC}{\BB}]=0\).
        
        \item\label{lem:aa2} \( [\FB{\bb_1}{\bb_2},\FO{\cc_1}{\bb_3}]=-\delta^{\bb_3}_{\bb_1}\FO{\cc_1}{\bb_2} \). We let \(\FO{\cc_1}{}\) be the space spanned by \(\FO{\cc_1}{\bb_1}\), then each \(\FO{\cc_1}{}\) is a standard representation of \(\sFB{}{\BB}\).
        
        \item\label{lem:aa3} \([\FC{\cc_1}{\cc_2},\FO{\cc_3}{\bb_1}]=\delta^{\cc_2}_{\cc_3}\FO{\cc_1}{\bb_1}\), that is, if we let \(\FO{}{\bb_1}\) be the space spanned by \(\FO{\cc_1}{\bb_1}\), then \(\FO{}{\bb_1}\) is a standard representation of \(\sFC{\CC}{}\).
        
        \item\label{lem:aa4} \([\FO{\cc_1}{\bb_1},\FA{\bb_2}{\cc_2}]=\delta^{\bb_1}_{\bb_2}\FC{\cc_1}{\cc_2}-\delta^{\cc_2}_{\cc_1}\FB{\bb_2}{\bb_1}\).
    \end{enumerate}
    
    \begin{proof}We prove
    \begin{itemize}
      \item[\eqref{lem:aa2}]

        One has
        \(
        [\FB{\bb_1}{\bb_2},\FO{\cc_1}{\bb_3}]=[\bx{\bb_1}\otimes \bX{\bb_2} , \cx{\cc_1}\otimes\bX{\bb_3}]=
        -(\bX{\bb_3}\cdot\bx{\bb_1}) \cx{\cc_1}\otimes \bX{\bb_2}=-\delta^{\bb_3}_{\bb_1}\cx{\cc_1}\otimes \bX{\bb_2}=-\delta^{\bb_3}_{\bb_1}\FO{\cc_1}{\bb_2}
        \),

       \item[\eqref{lem:aa3}]   
        \(
        [\FC{\cc_1}{\cc_2},\FO{\cc_3}{\bb_1}]=[\cx{\cc_1}\otimes\cX{\cc_2},\cx{\cc_3}\otimes\bX{\bb_1}]=\delta^{\cc_2}_{\cc_3}\FO{\cc_1}{\bb_1}
        \),

         \item[\eqref{lem:aa4}] 
        \(
        [\FO{\cc_1}{\bb_1},\FA{\bb_2}{\cc_2}]=[\cx{\cc_1}\otimes\bX{\bb_1},\bx{\bb_2}\otimes\cX{\cc_2}]=\delta^{\bb_1}_{\bb_2}\cx{\cc_1}\otimes\cX{\cc_2}
        -\delta^{\cc_2}_{\cc_1}\bx{\bb_2}\otimes\bX{\bb_1}=\delta^{\bb_1}_{\bb_2}\FC{\cc_1}{\cc_2}-\delta^{\cc_2}_{\cc_1}\FB{\bb_2}{\bb_1}
        \).
     \end{itemize}
     \end{proof}
\end{Lemma}
	\subsection{The involution $\theta:\mathfrak{net}_{\protect\CC,N}\simeq\mathfrak{net}_{\BB,N}$}\label{sec:BCdual}
 In this section, we give a theoretical result, which is not going to help with 
 our concrete calculations, but will be useful if one wants to classify all 
 the Lie subalgebras of \(\mathfrak{net}_{\,\cdot,N}\), since one may always suppose \(\CC\leq \BB\).
\begin{Theorem}\label{thm:BCdual}
	Let \(\mathfrak{net}_{\CC,N}=\sFA{\CC}{\BB}\oplus\sFB{}{\BB}\oplus\sFC{\CC}{}\) and 
	\(\mathfrak{net}_{\BB,N}=\ssFA{\CC}{\BB}\oplus\ssFB{}{\BB}\oplus\ssFC{\CC}{}\), with \(\dim\ssFC{\CC}{}=\BB^2\), etc.
	Define \bas\theta:\mathfrak{net}_{\CC, N}\rightarrow\mathfrak{net}_{\BB,N}\eas by
	\bas
	\theta \FA{\bb_1}{\cc_1}&=&\FAb{\cc_1}{\bb_1},\\
	\theta \FB{\bb_1}{\bb_2}&=&-\FCb{\bb_2}{\bb_1},\\
	\theta \FC{\cc_1}{\cc_2}&=&-\FBb{\cc_2}{\cc_1},
	\eas
	with \(\cc_i=1,\ldots,\CC\) and \(\bb_i=1,\ldots,\BB\) for \(i=1,2\).
	Then \(\theta\) is an involutative Lie algebra isomorphism, which is to say that \(\theta^2=1\) and \(\theta[p,q]=[\theta p, \theta q]\) for all \(p,q\in \mathfrak{net}_{\CC,N}\).
\end{Theorem}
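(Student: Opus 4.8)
The plan is to establish three things about $\theta$: that it is a linear bijection, that $\theta^2=\mathrm{id}$, and that it preserves the bracket. Together these say precisely that $\theta$ is an involutive Lie algebra isomorphism.

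First the structural bookkeeping. By Theorem \ref{thm:abc} the elements $\FA{\bb_1}{\cc_1},\FB{\bb_1}{\bb_2},\FC{\cc_1}{\cc_2}$ form a basis of $\mathfrak{net}_{\CC,N}$, so $\theta$ is a well-defined linear map once prescribed on them and extended by linearity. It carries $\sFA{\CC}{\BB}$ onto $\ssFA{}{}$, $\sFB{}{\BB}$ onto $\ssFC{}{}$, and $\sFC{\CC}{}$ onto $\ssFB{}{}$; since $\dim\sFB{}{\BB}=\BB^2=\dim\ssFC{}{}$, $\dim\sFC{\CC}{}=\CC^2=\dim\ssFB{}{}$, and $\dim\sFA{\CC}{\BB}=\BB\CC=\dim\ssFA{}{}$, the map is a bijection onto $\mathfrak{net}_{\BB,N}$. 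To make sense of ``involutive'' I define $\theta$ on $\mathfrak{net}_{\BB,N}$ by the mirror-image formulas $\theta\FAb{\cc_1}{\bb_1}=\FA{\bb_1}{\cc_1}$, $\theta\FCb{\bb_1}{\bb_2}=-\FB{\bb_2}{\bb_1}$, $\theta\FBb{\cc_1}{\cc_2}=-\FC{\cc_2}{\cc_1}$. A one-line check on each basis element then gives $\theta^2=\mathrm{id}$: for the $\FA{}{}$-piece the two index swaps undo each other, and for the $\FB{}{}$- and $\FC{}{}$-pieces the two minus signs cancel.

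Next I would record the structure constants of $\mathfrak{net}_{\BB,N}$ in the barred basis. These follow verbatim from Lemmas \ref{lem:calgebra}, \ref{lem:Balgebra}, \ref{rem:BFA}, \ref{rem:CFA} and \ref{lem:AA} by interchanging the roles of colors and cocolors (hence $\CC\leftrightarrow\BB$) and adding bars. In particular $\ssFC{}{}\simeq\mathfrak{gl}_\BB$, $\ssFB{}{}\simeq\mathfrak{gl}_\CC$, $[\ssFB{}{},\ssFC{}{}]=0$, $\ssFA{}{}$ is abelian, and the mixed brackets carry the same $\pm\delta$ pattern, with the minus sign attached to the $\bar{\mathfrak{c}}$-action exactly as in Lemma \ref{rem:CFA}. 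Having these in hand, I verify $\theta[x,y]=[\theta x,\theta y]$ over basis elements, which by bilinearity and Theorem \ref{thm:abc} reduces to six cases. Two are immediate: $[\sFA{\CC}{\BB},\sFA{\CC}{\BB}]=0$ maps to $[\ssFA{}{},\ssFA{}{}]=0$, and $[\sFB{}{\BB},\sFC{\CC}{}]=0$ maps to $[-\FCb{}{},-\FBb{}{}]=[\ssFC{}{},\ssFB{}{}]=0$. The $[\FB{}{},\FB{}{}]$ and $[\FC{}{},\FC{}{}]$ cases amount to the statement that $X\mapsto -X^{\mathsf T}$ is an automorphism of $\mathfrak{gl}$: the index transposition together with the minus in $\FB{\bb_1}{\bb_2}\mapsto-\FCb{\bb_2}{\bb_1}$ reproduces, via the $\mathfrak{gl}_\BB$-bracket for $\FCb{}{}$, exactly $\theta$ applied to the $\mathfrak{gl}_\BB$-bracket for $\FB{}{}$ (and symmetrically for $\FC{}{}$ versus $\FBb{}{}$).

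The two remaining mixed cases are where the signs must conspire. For $[\FC{\cc_1}{\cc_2},\FA{\bb_1}{\cc_3}]=-\delta^{\cc_3}_{\cc_1}\FA{\bb_1}{\cc_2}$ from Lemma \ref{rem:CFA}, applying $\theta$ gives $-\delta^{\cc_3}_{\cc_1}\FAb{\cc_2}{\bb_1}$, whereas $[\theta\FC{\cc_1}{\cc_2},\theta\FA{\bb_1}{\cc_3}]=-[\FBb{\cc_2}{\cc_1},\FAb{\cc_3}{\bb_1}]=-\delta^{\cc_1}_{\cc_3}\FAb{\cc_2}{\bb_1}$, and the two agree; the analogous check for $[\FB{\bb_1}{\bb_2},\FA{\bb_3}{\cc_1}]=\delta^{\bb_2}_{\bb_3}\FA{\bb_1}{\cc_1}$ uses the minus hidden in the $\bar{\mathfrak{c}}$-action in $\mathfrak{net}_{\BB,N}$. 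I expect the only real obstacle to be notational: keeping the upper/lower (color/cocolor) index placement straight as the two indices exchange meaning in passing from $\mathfrak{net}_{\CC,N}$ to $\mathfrak{net}_{\BB,N}$, and confirming that the single minus sign on the $\FB{}{}$- and $\FC{}{}$-images is simultaneously the sign making $X\mapsto-X^{\mathsf T}$ an automorphism and the sign reconciling the relative signs of Lemmas \ref{rem:BFA} and \ref{rem:CFA} in the mixed cases. Once the barred structure constants are written down, each of the six cases is a single $\delta$-symbol manipulation.
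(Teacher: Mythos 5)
Your proposal is correct and follows essentially the same route as the paper's own proof: verify $\theta[x,y]=[\theta x,\theta y]$ case by case on basis elements, using the structure constants of $\mathfrak{net}_{\BB,N}$ obtained from Lemmas \ref{lem:calgebra}, \ref{lem:Balgebra}, \ref{rem:BFA}, \ref{rem:CFA} and \ref{lem:AA} with the roles of colors and cocolors interchanged. You are somewhat more explicit than the paper about bijectivity (via the dimension counts of the three blocks) and about what ``involutive'' means (defining the mirror map on $\mathfrak{net}_{\BB,N}$ and checking $\theta^2=\mathrm{id}$), points the paper leaves implicit behind the remark that $\theta$ preserves the dimension $N=\CC+\BB$.
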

\begin{proof}
	Observe that \(\theta\) preserves the dimension \(N=\CC+\BB\). Furthermore,
	\bas
		\theta[\FC{\cc_1}{\cc_2},\FC{\cc_3}{\cc_4}]&=&\delta^{\cc_2}_{\cc_3} \theta\FC{\cc_1}{\cc_4}-\delta^{\cc_4}_{\cc_1} \theta\FC{\cc_3}{\cc_2}
		=-\delta^{\cc_2}_{\cc_3} \FBb{\cc_4}{\cc_1}+\delta^{\cc_4}_{\cc_1} \FBb{\cc_2}{\cc_3}
		=[\FBb{\cc_2}{\cc_1},\FBb{\cc_4}{\cc_3}]
		=[\theta\FC{\cc_1}{\cc_2},\theta\FC{\cc_3}{\cc_4}],
		\\\theta[\FB{\bb_1}{\bb_2},\FB{\bb_3}{\bb_4}]&=&\delta^{\bb_2}_{\bb_3} \theta\FB{\bb_1}{\bb_4}-\delta^{\bb_4}_{\bb_1} \theta\FB{\bb_3}{\bb_2}
		=-\delta^{\bb_2}_{\bb_3} \FCb{\bb_4}{\bb_1}+\delta^{\bb_4}_{\bb_1} \FCb{\bb_2}{\bb_3}
		=[\FCb{\bb_2}{\bb_1},\FCb{\bb_4}{\bb_3}]
		=[\theta\FB{\bb_1}{\bb_2}, \theta\FB{\bb_3}{\bb_4}],
		\\\theta[\FC{\cc_1}{\cc_2},\FA{\bb_1}{\cc_3}]&=&-\delta^{\cc_3}_{\cc_1} \theta\FA{\bb_1}{\cc_2}=-\delta^{\cc_3}_{\cc_1} \FAb{\cc_2}{\bb_1}
		=-[\FBb{\cc_2}{\cc_1},\FAb{\cc_3}{\bb_1}]=[\theta\FC{\cc_1}{\cc_2}, \theta\FA{\bb_1}{\cc_3}],
		\\\theta[\FB{\bb_1}{\bb_2},\FA{\bb_3}{\cc_1}]&=&\delta^{\bb_2}_{\bb_3} \theta\FA{\bb_1}{\cc_1}=\delta^{\bb_2}_{\bb_3} \FAb{\cc_1}{\bb_1}
		=-[\FCb{\bb_2}{\bb_1},\FAb{\cc_1}{\bb_3}]=[\theta\FB{\bb_1}{\bb_2}, \theta\FA{\bb_3}{\cc_1}].
	\eas
This takes care of all the nonzero brackets.
\end{proof}
\section{Structure of the Lie algebra \(\mathfrak{net}_{\CC,N}\)}\label{sec:structureLie}
\begin{Theorem}\label{eq:mainth}
	Assume \(0<\CC<N\). Then the Levi decomposition of \(\mathfrak{net}_{\CC,N}\), is given by a solvable part
	\bas
		\begin{bmatrix} \langle\dCC\rangle & 0\\ \sFA{\CC}{\BB} & \langle\dBB\rangle\end{bmatrix}
	\eas
	and the semisimple part
	\bas
	\begin{bmatrix} \FC{}{}/\langle\dCC\rangle& 0 \\ 0& \FB{}{}/\langle\dBB\rangle \end{bmatrix}
	\simeq\begin{bmatrix} \mathfrak{sl}_\CC & 0 \\ 0& \mathfrak{sl}_{\BB}\end{bmatrix}.
	\eas
 The semisimple part acts on the solvable part by
\bas
[\begin{bmatrix} \FC{}{}/\langle\dCC\rangle& 0 \\ 0& \FB{}{}/\langle\dBB\rangle \end{bmatrix},
 \begin{bmatrix} \langle\dCC\rangle & 0\\ \sFA{\CC}{\BB} & \langle\dBB\rangle\end{bmatrix}]
	\subset
 \begin{bmatrix} 0 & 0\\\FA{}{}&0\end{bmatrix}.
	\eas 
	The Lie algebra \(\mathfrak{net}_{\CC,N}\) is {\em reductive} if \(\FA{}{}=0\). 
 The following remark is outside the blanket scope of this paper, as it makes assumptions on the coefficients: If \(\FA{}{}\neq 0\), it can be removed by coordinate transformations under the   following {\em nonresonance} condition:
	the eigenvalues of the \(\sFB{}{}\)- and \( \sFC{}{}\)-block should be different. So this is something that can be checked
 in particular cases with a given organizing center.
 
	The dimension of \(\FA{}{}\) is {\em {\em (}cf.  {\em Corollary }{\em  \ref{cor:dima}} {\em)}}
\bas
	\BB\CC&=&\dim\mathfrak{Gr}_{\CC, N}.
\eas
The dimension of the semisimple part is \(\BB^2+\CC^2-2\) and of the solvable part \(\CC\BB+2\), 
	where the \(2\) is the dimension of the first homology \(H_1(\mathfrak{net}_{\CC,N},\mathfrak{net}_{\CC,N})=\mathfrak{net}_{\CC,N}/[\mathfrak{net}_{\CC,N},\mathfrak{net}_{\CC,N}]\simeq\langle\dBB,\dCC\rangle\), cf. Corollary \ref{cor:hom1}.
We can view an element of \(\mathfrak{net}_{\CC,N}\) as a block matrix:
	\ba\label{eq:mat}
	\begin{bmatrix}
		\begin{pmatrix}
			\cFC{1}{1}&\cdots&\cFC{1}{\CC} \\
			\vdots&\ddots&\vdots\\
			\cFC{\CC}{1}&\cdots&\cFC{\CC}{\CC}
		\end{pmatrix}
		&
		\begin{pmatrix}
			0&\cdots&0\\
			\vdots&\ddots&\vdots\\
			0&\cdots&0
		\end{pmatrix}
	\\
		\begin{pmatrix}
			\cFA{1}{1}&\cdots&\cFA{1}{\CC}\\
			\vdots&\ddots&\vdots\\
			\cFA{\BB}{1}&\cdots&\cFA{\BB}{\CC}
		\end{pmatrix}
		&\begin{pmatrix}
			\cFB{1}{1}&\cdots&\cFB{1}{\BB}\\
			\vdots&\ddots&\vdots\\
			\cFB{\BB}{1}&
			\cdots&
			\cFB{\BB}{\BB}
		\end{pmatrix}
	\end{bmatrix}
		\in\begin{bmatrix}\sFC{\CC}{}&0\\\sFA{\CC}{\BB}&\sFB{\BB}{}\end{bmatrix}
	=\begin{bmatrix}\mathfrak{gl}_\CC&0\\\mathfrak{Gr}_{\CC, N}&\mathfrak{gl}_{\BB}\end{bmatrix}.
	\ea
	There are at most \(\frac{N}{2}+1\) nonisomorphic \(\mathfrak{net}_{\CC,N}\) for any given \(N\in\mathbb{N}\),
	Moreover, \(\min(\CC,\BB)\) and \(N\) fix the isomorphy class.
	Every colored network has a Lie algebra that is isomorphic to a subalgebra of one of these \(\mathfrak{net}_{\CC,N}\).
		The classification of these subalgebras is in general nontrivial, cf. \cite{MR3059183}.
	\end{Theorem}
	\begin{Corollary}\label{cor:hom1}
	The first homology can now be computed:
	\bas
	H_1(\mathfrak{net}_{\CC, N},\mathfrak{net}_{\CC, N})=\langle\dBB,\dCC\rangle.
	\eas
	\end{Corollary}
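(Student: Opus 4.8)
The plan is to display $\mathfrak{net}_{\CC,N}$ as the semidirect sum $\mathfrak r\rtimes\mathfrak s$, where $\mathfrak r=\langle\dCC\rangle\oplus\sFA{\CC}{\BB}\oplus\langle\dBB\rangle$ and $\mathfrak s=\hat{\FC{}{}}\oplus\hat{\FB{}{}}$, and then to invoke the standard characterisation of the radical. Every bracket I need has already been computed in Section \ref{sec:Lie}, so the argument is mostly assembly.

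First I would check that $\mathfrak r$ is an ideal. By the vector-space splitting of Theorem \ref{thm:abc} it suffices to bracket $\mathfrak r$ against the three families $\FA{}{},\FB{}{},\FC{}{}$. The brackets $[\FA{}{},\dCC]$ and $[\FA{}{},\dBB]$ lie in $\sFA{\CC}{\BB}$ by Lemma \ref{lem:commCA} and its $\dBB$-analogue; $[\FB{}{},\FA{}{}]$ and $[\FC{}{},\FA{}{}]$ lie in $\sFA{\CC}{\BB}$ by Lemmas \ref{rem:BFA} and \ref{rem:CFA}; and $[\FB{}{},\dCC]=[\FC{}{},\dBB]=0$ because $[\sFB{}{\BB},\sFC{\CC}{}]=0$ (Lemma \ref{lem:Balgebra}), while $[\FC{}{},\dCC]=0$ and $[\FB{}{},\dBB]=0$ by Lemmas \ref{lem:Chom} and \ref{lem:Bhom}. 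All land in $\mathfrak r$. Solvability is then immediate from the derived series $\mathfrak r\supset[\mathfrak r,\mathfrak r]=\sFA{\CC}{\BB}\supset[\sFA{\CC}{\BB},\sFA{\CC}{\BB}]=0$, the last equality by Lemma \ref{lem:AA} and the middle one because $[\dCC,\dBB]=0$ whereas $[\dCC,\sFA{\CC}{\BB}]=[\dBB,\sFA{\CC}{\BB}]=\sFA{\CC}{\BB}$.

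For the complement, Lemmas \ref{lem:calgebra} and \ref{lem:Balgebra} give $\sFC{\CC}{}\simeq\mathfrak{gl}_\CC$ and $\sFB{}{\BB}\simeq\mathfrak{gl}_\BB$, so their traceless parts are $\hat{\FC{}{}}\simeq\mathfrak{sl}_\CC$ and $\hat{\FB{}{}}\simeq\mathfrak{sl}_\BB$; since these commute ($[\sFB{}{\BB},\sFC{\CC}{}]=0$) we get $\mathfrak s\simeq\mathfrak{sl}_\CC\oplus\mathfrak{sl}_\BB$, a sum of simple ideals (with the convention $\mathfrak{sl}_1=0$), hence semisimple and perfect. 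Splitting each $\mathfrak{gl}$ as $\mathfrak{sl}\oplus\langle\mathrm{id}\rangle$ and combining with Theorem \ref{thm:abc} yields the vector-space equality $\mathfrak{net}_{\CC,N}=\mathfrak r\oplus\mathfrak s$, so $\mathfrak{net}_{\CC,N}/\mathfrak r\simeq\mathfrak s$. The final and only nonformal step is that $\mathfrak r$ is the radical: being a solvable ideal it sits inside $\mathrm{rad}(\mathfrak{net}_{\CC,N})$, and conversely the image of the radical in $\mathfrak{net}_{\CC,N}/\mathfrak r\simeq\mathfrak s$ is a solvable ideal of a semisimple algebra, hence $0$, so $\mathrm{rad}(\mathfrak{net}_{\CC,N})\subseteq\mathfrak r$. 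Thus the displayed block form is genuinely the Levi decomposition.

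The remaining assertions follow by bookkeeping. The dimensions read off as $\dim\sFA{\CC}{\BB}=\BB\CC=\dim\mathfrak{Gr}_{\CC,N}$ (Corollary \ref{cor:dima}), $\dim\mathfrak s=(\CC^2-1)+(\BB^2-1)$ and $\dim\mathfrak r=\BB\CC+2$, consistent with Lemma \ref{lem:dim}. For the first homology I would compute $[\mathfrak{net}_{\CC,N},\mathfrak{net}_{\CC,N}]=\mathfrak{sl}_\CC\oplus\mathfrak{sl}_\BB\oplus\sFA{\CC}{\BB}$ (the $\sFA{\CC}{\BB}$ summand from $[\sFB{}{\BB},\sFA{\CC}{\BB}]=[\sFC{\CC}{},\sFA{\CC}{\BB}]=\sFA{\CC}{\BB}$, the $\mathfrak{sl}$ summands from $[\mathfrak{gl},\mathfrak{gl}]=\mathfrak{sl}$), whence the abelianisation is $\langle\dCC,\dBB\rangle$, which is Corollary \ref{cor:hom1}. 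The count of isomorphy classes uses the involution $\theta$ of Theorem \ref{thm:BCdual}: $\mathfrak{net}_{\CC,N}\simeq\mathfrak{net}_{\BB,N}$, so the class depends only on the unordered pair $\{\CC,N-\CC\}$, i.e.\ on $\min(\CC,\BB)$ together with $N$; as $\min(\CC,\BB)$ ranges over $0,1,\ldots,\lfloor N/2\rfloor$ there are at most $\lfloor N/2\rfloor+1\leq\tfrac N2+1$ of them. Finally, any colored-network Lie algebra is the span of a subset of the $\nu(\sigma)$ and hence a subalgebra of the appropriate $\mathfrak{net}_{\CC,N}$. The one point to watch throughout is the boundary behaviour: when $\CC=1$ or $\BB=1$ the corresponding simple factor $\mathfrak{sl}_1$ degenerates to $0$, and for $N=2$ the semisimple part disappears altogether and $\mathfrak{net}_{1,2}$ is solvable, so the dimension formulas must be read with $\mathfrak{sl}_1=0$; this is also where the reductive case $\FA{}{}=0$ (that is $\CC=N$, $\mathfrak{net}_{N,N}=\mathfrak{gl}_N$) sits.
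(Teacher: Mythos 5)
Your argument for the homology statement itself --- that $[\mathfrak{net}_{\CC,N},\mathfrak{net}_{\CC,N}]=\hat{\FC{}{}}\oplus\hat{\FB{}{}}\oplus\sFA{\CC}{\BB}$ because the semisimple factors are perfect and $[\sFB{}{\BB},\sFA{\CC}{\BB}]=\sFA{\CC}{\BB}$, so the abelianisation is spanned by the images of $\dCC$ and $\dBB$ --- is correct and is essentially the paper's own proof, which likewise notes that the semisimple part is acyclic and that $\sFA{\CC}{\BB}=[\dBB,\sFA{\CC}{\BB}]$. The surrounding re-derivation of the Levi decomposition is sound but not needed for this corollary, and your remark on the degenerate case $\CC=N$ (where the homology drops to $\langle\dCC\rangle$) matches the paper's.
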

	\begin{proof}
		The semisimple part is acyclic, and \(\sFA{\CC}{\BB}=[\dBB,\sFA{\CC}{\BB}]\).
	We see that  \(\dim H_1(\mathfrak{net}_{\CC,N},\mathfrak{net}_{\CC,N})=2\), `explaining' this factor in the formula for the dimension of the semisimple and solvable part.
	If \(\CC=N\), this factor would have been \(1\) since then
	\bas
		H_1(\mathfrak{net}_{N, N},\mathfrak{net}_{N, N})=\langle\dCC\rangle=\langle \mathrm{Id}_{\mathbb{R}^N}\rangle.
	\eas
	\end{proof}
 We conclude this section with a method illustration. In this example, we aim to guide readers through the algorithms and main theorem of the paper, offering them a clear understanding of how the results presented in the paper can be applied.

\begin{Example}\label{exm:dim3col1exm1}

    In \cite[\S 11.2]{rink2015coupled} the following \(3\)-cell example with \(\Delta_1=3\) is treated:
\bas
\dot{x}_1&=&f^1({x_1}, {x_1}, {x_1}),\\
\dot{x}_2&=&f^1({x_2}, {x_1}, {x_1}),\\
\dot{x}_3&=&f^1({x_3}, {x_2}, {x_1}).
\eas
 \begin{enumerate}
     \item [Step I]: Identification of the original coordinate \(x_1,\cdots,x_n\) to \(\vv{j}{\cc}\).
     
     By following Equation \ref{eqs:split}, this leads to 
   $\cCC=\{\vv{0}{1}\}$ and $\cBB=\{\vv{1}{1},\vv{2}{1}\}$, with $N=3$, $\CC=1$, $\BB=2$, $n_{1}=3$, and $R_{1}=\{0,1,2\}$, $\bar{R}_1=\{1,2\}$.

   \item [Step II]: Applying Algorithm \ref{alg:ff} to identification of  \(\vv{}{\cc}\) coordinates to the \(x_i\) coordinates, and the middle notational coordinate  \(\cu{},\bu{}\) as defined in Definition  \ref{def:xynotation}. 

   We need to identify $x_1, x_2, x_3$ with $\vv{0}{1}, \vv{1}{1}, \vv{2}{1}$ and $\cu{1}, \bu{1}, \bu{2}$, by applying  Algorithm \ref{alg:ff}.
   
	We see that \(\dot{x}_1\) has the maximal number of self interactions.
 Using the `quick' option we see that
So, we have $x_1=\vv{0}{1}=\vv{0}{1}$, $x_2=\vv{1}{1}=\bu{1}$, and $x_3=\vv{2}{1}=\bu{2}$.

Hence, the relation between the coordinates (using Algorithm \ref{alg:ff} and  Definition \ref{def:xynotation}) is given by
\bas
\begin{pmatrix}
x_1\\
x_3\\
x_2\\
\end{pmatrix}=\begin{pmatrix}
\vv01
\\
\vv11
\\
\vv21
\\
\end{pmatrix}=\begin{pmatrix}
\cu1\\
\bu1
\\
\bu2
\\
\end{pmatrix}.
\eas
Since we have
\bas
\dot{x}_1&=&f^1(x_1,x_1,x_1),\\
\dot{x}_2&=&f^1(x_2,x_1,x_1),\\
\dot{x}_3&=&f^1(x_3,x_2,x_1),
\eas
	where \(\Delta_1=3\) and we use the notation given in \eqref{eq:CU}
	\bas
	\mathsf{U}^{1,1}_{0,3}&=&[x_1,x_1,x_1]=[\vv{0}{1},\vv{0}{1},\vv{0}{1}]=[\cu{1},\cu{1},\cu{1}],\\
	\mathsf{U}^{1,1}_{1,3}&=&[x_2,x_1,x_1]=[\vv{1}{1},\vv{0}{1},\vv{0}{1}]=[\bu{1},\cu{1},\cu{1}],\\
	\mathsf{U}^{1,1}_{2,3}&=&[x_3,x_2,x_2]=[\vv{2}{1},\vv{1}{1},\vv{1}{1}]=[\bu{2},\bu{1},\bu{1}].
	\eas

  \item [Step III]: Now, by applying Algorithm \ref{alg:xbasis} and Definition \ref{def:xy}, the relationship between the coordinates is given by:
		\ba\label{eqn:3x1}
			\begin {pmatrix}x_1\\x_2\\x_3\\x^1\\x^2\\x^3\end{pmatrix}
   =\begin {pmatrix}\vv{0}{1}\\\vv{1}{1}\\\vv{2}{1}\\\VV{0}{1}\\\VV{1}{1}\\\VV{2}{1}\end{pmatrix}
			&=&	\exp(\xx{0}{1}\otimes 	\sum_{q=1}^{2} \XX{q}{1}\triangleright)
			\begin {pmatrix}\xx{0}{1}\\\xx{1}{1}\\\xx{2}{1}\\\XX{0}{1}\\\XX{1}{1}\\\XX{2}{1}\end{pmatrix}
			=\begin {pmatrix}\xx{0}{1}\\\xx{1}{1}+\xx{0}{1}\\\xx{2}{1}+\xx{0}{1}\\\XX{0}{1}-	(\XX{1}{1}+ \XX{2}{1})\\\XX{1}{1}\\\XX{2}{1}\end{pmatrix}
			=\begin {pmatrix}\cx{1}\\\bx{1}+\cx{1}\\\bx{2}+\cx{1}\\\cX{1}-	(\bX{1}+ \bX{2})\\\bX{1}\\\bX{2}\end{pmatrix}.
		\ea
   \item [Step IV:] 
In this step, we employ the new coordinates in the Jacobian matrix of the system \eqref{eq:n3c1}
\bas
J=\begin{pmatrix}
{f^1_3} + {f^1_2} + {f^1_1}& 0& 0\\
{f^1_3} + {f^1_2}&{f^1_1}& 0\\
{f^1_3}&{f^1_2}&{f^1_1}
\end{pmatrix}.
\eas
Hence, 
\bas
J&=&\left(\left({f^1_3} + {f^1_2} + {f^1_1}\right)x_1\right)\otimes {x^1}+\left(\left({f^1_3} + {f^1_2} \right)x_1+{f^1_1} x_2\right) \otimes {x^2}+\left({f^1_3} x_1+{f^1_2} x_2+{f^1_1} x_3\right)\otimes {x^3}.
\\
&=&
\left(\left({f^1_3} + {f^1_2} + {f^1_1}\right)\cx{1}\right)\otimes (\cX{1}-	(\bX{1}+ \bX{2}))
\\&&
+\left(\left({f^1_3} + {f^1_2} \right)\cx{1}+{f^1_1} (\bx{2}+\cx{1})\right) \otimes \bX{2}
\\&&
+\left({f^1_3} \cx{1}+{f^1_2} (\bx{2}+\cx{1})+{f^1_1} (\bx{1}+\cx{1}))\right)\otimes \bX{1}
\\
&=&
\left({f^1_3} + {f^1_2} + {f^1_1}\right)\cx{1}\otimes  \cX{1}+  {f^1_2} \bX{1} \otimes \bx{2}+  {f^1_1} \bx{2} \otimes \bX{2}+ {f^1_1} \bx{1} \otimes \bX{1}
\\
&=&
\left({f^1_3} + {f^1_2} + {f^1_1}\right){\FC{1}{1}}+ {f^1_1}{\FB{1}{1}}
+{f^1_2}{\FB{2}{1}}
+{f^1_1}{\FB{2}{2}}=\mathsf{J}.
\eas
The Jacobi-matrix is given by
\bas
\mathsf{J}&=&\begin{pmatrix}
{f^1_3} + {f^1_2} + {f^1_1}& 0& 0\\
 0&{f^1_1}& 0\\
 0&{f^1_2}&{f^1_1}
\end{pmatrix},
\eas
and the Jacobian is
\bas
|\mathsf{J}|&=&({f^1_1})^2
({f^1_3} + {f^1_2} + {f^1_1})
.
\eas
 \end{enumerate}
\end{Example}

	\section{Dual pair}\label{sec:dualpair}
 In this section we formulate a theorem, that has no immediate application to the remaining sections of this paper,
 but it could be of use if one sets out to classify the Lie subalgebras of \(\mathfrak{net}_{\CC,N}\).
 It was motivated by the observation that the dimension is invariant under permuting \(\BB\) and \(\CC\).
 
	\begin{Definition}\label{def:dualpair}
		A dual pair in a Lie algebra $\mathfrak{g}$ is a pair of subalgebras $\mathfrak{g}_i, i=0,1$, such that $\mathcal{Z}(\mathfrak{g}_{i+1\mod{2}})=\mathfrak{g}_{i}$,  where the irreducible representations are formed by tensor products of irreducible representations of the respective elements in the pair.
	\end{Definition}\
	\begin{Theorem}\label{thm:dualpair}
		The subalgebras \(\langle\dBB\rangle\oplus\sFC{\CC}{}\) and \(\langle\dCC\rangle\oplus\sFB{}{\BB}\) form a dual pair,
		that is, they are the center of one another:
	\bas
		\mathcal{Z}(\langle\dBB\rangle\oplus{\sFC{\CC}{}})=\langle\dCC\rangle\oplus{\sFB{}{\BB}}\eas
		and \bas\mathcal{Z}(\langle\dCC\rangle\oplus{\sFB{}{\BB}})=\langle\dBB\rangle\oplus{\sFC{\CC}{}}.\eas
	\end{Theorem}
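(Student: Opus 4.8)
The plan is to prove each of the two centralizer equalities by a pair of inclusions, working entirely inside the vector-space decomposition \(\mathfrak{net}_{\CC,N}=\sFA{\CC}{\BB}\oplus\sFB{}{\BB}\oplus\sFC{\CC}{}\) of Theorem \mref{thm:abc} and reading off everything from the structure constants already computed in \S\mref{sec:Lie}. The two equalities are mirror images of one another under the interchange \(\sFB{}{\BB}\leftrightarrow\sFC{\CC}{}\), \(\dBB\leftrightarrow\dCC\), \(\BB\leftrightarrow\CC\), so I would prove the first in detail and then state that the second follows by the same argument (or, alternatively, by transporting it through the involution \(\theta\) of Theorem \mref{thm:BCdual}, which sends \(\dBB\) and \(\dCC\) to scalar multiples of the corresponding generators in \(\mathfrak{net}_{\BB,N}\)).

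For the inclusion \(\langle\dCC\rangle\oplus\sFB{}{\BB}\subseteq\mathcal{Z}(\langle\dBB\rangle\oplus\sFC{\CC}{})\) I would simply check that the four cross-brackets between generators vanish: \([\dCC,\dBB]=0\) (since \(\dBB\in\sFB{}{\BB}\) and \([\sFB{}{\BB},\sFC{\CC}{}]=0\) by Lemma \mref{lem:Balgebra}), \([\dCC,\sFC{\CC}{}]=0\) by Lemma \mref{lem:Chom}, \([\sFB{}{\BB},\dBB]=0\) by Lemma \mref{lem:Bhom}, and \([\sFB{}{\BB},\sFC{\CC}{}]=0\) again by Lemma \mref{lem:Balgebra}. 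For the reverse inclusion, the key idea is that \(\sFA{\CC}{\BB}\) consists of nonzero-weight vectors for \(\dBB\): writing an arbitrary centralizing element as \(X=A+B+C\) with \(A\in\sFA{\CC}{\BB}\), \(B\in\sFB{}{\BB}\), \(C\in\sFC{\CC}{}\), and bracketing with \(\dBB\) gives \([X,\dBB]=-A\), using the relation \([\dBB,\FA{\bb_1}{\cc_1}]=\FA{\bb_1}{\cc_1}\) (the \(\dBB\)-companion of Lemma \mref{lem:commCA}) together with \([\dBB,B]=0\) and \([\dBB,C]=0\). Hence \(A=0\). Bracketing the surviving \(X=B+C\) with an arbitrary element of \(\sFC{\CC}{}\) kills the \(B\)-part automatically (as \([\sFB{}{\BB},\sFC{\CC}{}]=0\)) and forces \([C,\sFC{\CC}{}]=0\), i.e. \(C\) lies in the center of \(\sFC{\CC}{}\simeq\mathfrak{gl}_\CC\) (Lemma \mref{lem:calgebra}), which is exactly \(\langle\dCC\rangle\). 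This yields \(X\in\langle\dCC\rangle\oplus\sFB{}{\BB}\) and completes the first equality.

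The second equality, \(\mathcal{Z}(\langle\dCC\rangle\oplus\sFB{}{\BB})=\langle\dBB\rangle\oplus\sFC{\CC}{}\), is handled by the symmetric computation: bracketing \(X=A+B+C\) with \(\dCC\) now gives \([X,\dCC]=A\) via Lemma \mref{lem:commCA}, forcing \(A=0\), and then bracketing with \(\sFB{}{\BB}\) forces \(B\) into the center \(\langle\dBB\rangle\) of \(\sFB{}{\BB}\simeq\mathfrak{gl}_\BB\). I expect no serious obstacle here; the only two points requiring care are, first, confirming that the generators \(\dBB\) and \(\sFC{\CC}{}\) (respectively \(\dCC\) and \(\sFB{}{\BB}\)) already span the relevant subalgebra so that commuting with them suffices by linearity, and second, the identification of the center of \(\mathfrak{gl}_n\) with its scalar multiples, which is what pins the diagonal piece down to exactly \(\langle\dCC\rangle\) (resp. \(\langle\dBB\rangle\)) rather than all of \(\sFC{\CC}{}\) (resp. \(\sFB{}{\BB}\)). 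The conceptual heart of the whole argument is the single observation that \(\sFA{\CC}{\BB}\) is an eigenspace of \(\operatorname{ad}\dBB\) and of \(\operatorname{ad}\dCC\) with nonzero eigenvalue, which is what guarantees that no off-diagonal \(\sFA\)-component can survive in either centralizer.
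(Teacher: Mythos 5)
Your proposal is correct and follows essentially the same route as the paper: both arguments rest on the structure constants of \S\mref{sec:Lie}, the fact that \(\mathcal{Z}(\mathfrak{gl}_\CC)\) (resp.\ \(\mathcal{Z}(\mathfrak{gl}_\BB)\)) consists of the scalars \(\langle\dCC\rangle\) (resp.\ \(\langle\dBB\rangle\)), and the observation that \(\sFA{\CC}{\BB}\) is a nonzero-weight space for \(\ad{\dBB}\) and \(\ad{\dCC}\), which is precisely how the paper's generic-coefficient computation (via the condition \((\cFH{}{}+\beta)\cdot\cFA{}{}=0\) for all \(\beta\)) eliminates the \(\sFA{}{}\)-component. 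Your version merely repackages that computation as two inclusions with the single test element \(\dBB\) (resp.\ \(\dCC\)), which is a clean but not substantively different presentation.
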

	\begin{proof}
		Let \bas
		\mathsf{A}=\sum_{\cc_1,\cc_2=1}^\CC \cFC{\cc_1}{\cc_2}  \FC{\cc_1}{\cc_2}+\sum_{\cc_1=1}^\CC \sum_{\bb_1=1}^{\BB} \cFA{\cc_1}{\bb_1} \FA{\cc_1}{\bb_1}+\sum_{\bb_1,\bb_2=1}^{\BB}\cFB{\bb_1}{\bb_2}\FB{\bb_1}{\bb_2}.
	\eas
		be a generic element in \(\mathfrak{net}_{\CC,N}\),
		and let \(\beta\dBB+\sum_{\cc_1,\cc_2=1}^\CC \cFH{\cc_1}{\cc_2} \FC{\cc_1}{\cc_2}\) be a generic element of \(\langle\dBB\rangle\oplus{\sFC{\CC}{}}\).
		We compute \([{\beta\dBB+\sum_{\cc_1,\cc_2=1}^\CC \cFH{\cc_1}{\cc_2} \FC{\cc_1}{\cc_2}},\mathsf{A}]\):
	\bas
		\lefteqn{[{\beta\dBB+\sum_{\cc_1,\cc_2=1}^\CC \cFH{\cc_1}{\cc_2}  \FC{\cc_1}{\cc_2}},\mathsf{A}]}&&
		\\&=&\sum_{\cc_1,\cc_2=1}^\CC \cFH{\cc_1}{\cc_2}\sum_{{\cc_3},{\cc_4}=1}^\CC \cFC{{\cc_3}}{{\cc_4}} [\FC{\cc_1}{\cc_2},\FC{{\cc_3}}{{\cc_4}}]+\sum_{\cc_1,\cc_2=1}^\CC \cFH{\cc_1}{\cc_2} \sum_{{\cc_3}=1}^\CC \sum_{{\bb_4}=1}^{\BB} \cFA{{\cc_3}}{{\bb_4}}[\FC{\cc_1}{\cc_2}, \FA{{\cc_3}}{{\bb_4}}]+\beta[\dBB,\sum_{{\cc_3}=1}^\CC \sum_{{\bb_4}=1}^{\BB} \cFA{{\cc_3}}{{\bb_4}}\FA{{\cc_3}}{{\bb_4}}]
	\\&=&
		\sum_{\cc_1,\cc_2=1}^\CC \cFH{\cc_1}{\cc_2} \sum_{{\cc_3},{\cc_4}=1}^\CC \cFC{{\cc_3}}{{\cc_4}} (\delta^{\cc_2}_{{\cc_3}} \FC{\cc_1}{{\cc_4}}-\delta^{{\cc_4}}_{\cc_1} \FC{{\cc_3}}{\cc_2})+\sum_{\cc_1,\cc_2=1}^\CC \cFH{\cc_1}{\cc_2} \sum_{{\cc_3}=1}^\CC \sum_{{\bb_4}=1}^{\BB} \cFA{{\cc_3}}{{\bb_4}}\delta^{\cc_2}\FA{\cc_1}{{\bb_4}}+\beta\sum_{{\cc_3}=1}^\CC \sum_{{\bb_4}=1}^{\BB} \cFA{{\cc_3}}{{\bb_4}}\FA{{\cc_3}}{{\bb_4}}
	\\&=&
		\sum_{{\cc_4},\cc_1,{\cc_2}=1}^\CC  \cFH{\cc_1}{{\cc_2}}\cFC{{\cc_2}}{{\cc_4}}  \FC{\cc_1}{{\cc_4}}- \cFC{{\cc_1}}{\cc_2}  \cFH{\cc_2}{{\cc_4}}  \FC{{\cc_1}}{{\cc_4}}+ \sum_{\cc_1,{\cc_2}=1}^\CC\sum_{{\bb_4}=1}^{\BB} \cFH{\cc_1}{{\cc_2}}  \cFA{{\cc_2}}{{\bb_4}}\FA{\cc_1}{{\bb_4}}+\beta\sum_{{\cc_3}=1}^\CC \sum_{{\bb_4}=1}^{\BB} \cFA{{\cc_3}}{{\bb_4}}\FA{{\cc_3}}{{\bb_4}}
	.
	\\&=&
		\sum_{{\cc_1},{\cc_2}=1}^\CC [\cFH{}{},\cFC{}{} ]_{\cc_2}^{\cc_1} \FC{{\cc_1}}{{\cc_2}}
		+ \sum_{\cc_1=1}^\CC\sum_{{\bb_1}=1}^{\BB}\left( (\cFH{}{}+\beta)\cdot \cFA{}{}\right)_{\bb_1}^{\cc_1} \FA{\cc_1}{{\bb_1}}
	.
	\eas
		For \(\mathsf{A}\) to be in \(\mathcal{Z}(\langle\dBB\rangle\oplus\hat{\sFC{\CC}{}})\), this expression should be zero for all \(\beta\dBB+\sum_{\cc_1,{\cc_2}=1}^\CC \cFH{\bb_1}{{\cc_2}} \FC{\bb_1}{{\cc_2}}\). 
		This implies that the \(\cFC{}{}\)-matrix has to commute with all \(\cFH{}{}\)-matrices, that is, it equals a multiple of the identity
		of  \(\sFC{\CC}{}\). In other words, \(\cFC{\cc_1}{\cc_2}=\alpha \delta^{\cc_1}_{\cc_2}\) and
		\bas
		\sum_{\cc_1,\cc_2=1}^\CC \cFC{\cc_1}{\cc_2}\FC{\cc_1}{\cc_2} &=&
		\sum_{\cc_1,\cc_2=1}^\CC\alpha \delta^{\cc_1}_{\cc_2}\FC{\cc_1}{\cc_2} 
		=\sum_{\cc_1=1}^\CC\alpha \FC{\cc_1}{\cc_1} =\alpha \dCC.
		\eas
			The vectors spanning \(\cFA{}{}\) have to be eigenvectors of all \(\cFH{}{}\)-matrices  for all eigenvalues \(-\beta\).
		This forces the \(\cFA{\cc}{{\bb}}, \cc=1,\ldots,\CC, \bb=1,\ldots,\BB\) to be zero.
	We conclude that \bas
		\mathcal{Z}(\langle\dBB\rangle\oplus{\sFC{\CC}{}})=\langle\dCC\rangle\oplus{\sFB{}{\BB}}.\eas
		The dual statement, in the sense of Theorem \ref{thm:BCdual},
		\bas\mathcal{Z}(\langle\dCC\rangle\oplus{\sFB{}{\BB}})=\langle\dBB\rangle\oplus{\sFC{\CC}{}},\eas has an analogous proof.
	\end{proof}
\section{Algorithms}\label{sec:alg}
\begin{algorithm}
\caption{Creating Hierarchy}\label{alg:ff}
\begin{algorithmic}[1]
\STATE
\COMMENT{This algorithm is not essential, it intends to bring, as far as possible, the representation of elements in \(\sFB{}{\BB}\) in lower diagonal form.
It leaves room for improvement or change of goal}
\STATE Let $q=0$.
\COMMENT{$q$ is the global cocolor counter, the index of \(\bu{}\).}
\FOR{$\cc=1$ \TO $\CC$} 
    \STATE Determine an equation $\dot{x}_{i_\cc}$ with color $\cc$ with the largest number of self-interactions.
    \COMMENT{If $i_\cc=n_\cc$ is a possible choice, make it, this empties the second for loop in the quick option.}
    \STATE Let $ x_{i_\cc}=\vv{0}{\cc}=\cu{\cc}, x^{i_\cc}=\VV{0}{\cc}=\cU{\cc}$.
    \COMMENT{This has the effect that we create zeros in the solvable part $\FA{}{}$, to be defined later.}
    \IF{you want to be quick}
         \FOR{$q_c=1$ \TO $i_\cc-1$} \STATE $q=q+1,x_{q_c}=\vv{q_c}{\cc}=\bu{q},x^{q_c}=\VV{q_c}{\cc}=\bU{q}$. \ENDFOR
         \FOR{$q_c=i_\cc+1$ \TO $n_\cc$} \STATE $q=q+1, x_{q_c}=\vv{q_c-1}{\cc}=\bu{q}, x^{q_c}=\VV{q_c-1}{\cc}=\bU{q}$. \ENDFOR
    \ELSE
        \FOR{$q_c=1$ \TO $n_\cc$} 
            \STATE Count for all $j_\cc\neq i_\cc$ the number of arguments $\varpi_j$ that $\dot{x}_{j_\cc}$ has not in common with $\dot{x}_{i_\cc}$ on any given position.
            \STATE Determine $k$, such that  $\varpi_k$ is minimal (or maximal, your choice may depend on which side of the diagonal you want to make zero as much as possible). 
            \IF{there is more than one $k$ such that $\varpi_k$ is extremal}
                \STATE Choose the $k$ with the highest position of the first $x_{i_\cc}$ among the arguments of $\dot{x}_k$.
            \ENDIF
            \STATE If that does not decide things, make an arbitrary choice of $k$.
            \STATE Let $q=q+1 , x_k=\vv{q_\cc}{\cc}=\bu{q}, x^k=\VV{q_\cc}{\cc}=\bU{q}$.
            \STATE Remove \(\dot{x}_k\) from the list of equations with color \(\cc\) to be checked on the following steps in this loop.
        \ENDFOR
    \ENDIF
\ENDFOR
\end{algorithmic}
\end{algorithm}
 \begin{algorithm}
 \caption{Exponential transformation}\label{alg:xbasis}
 \begin{algorithmic}[1]
	\STATE We now define  for \(1\leq k\leq \CC<N\) a color-preserving map (which preserves the colored network architecture) from \(\mathbb{R}^{n_k}\oplus\mathbb{R}^{\star n_k}\) to itself:
		\bas
			\begin{pmatrix}\xx{0}{k}\\	\xx{1}{k}\\\vdots\\\xx{\bar{n}_k}{k}\\\XX{0}{k}\\\XX{1}{k}\\\vdots\\\XX{\bar{n}_k}{k}\end{pmatrix}&=&
				\exp(-\sum_{\cc=1}^\CC\sum_{q=1}^{\bar{n}_\cc}\vv{0}{\cc}\otimes 	\VV{q}{\cc}\triangleright)
			\begin{pmatrix}\vv{0}{k}\\	\vv{1}{k}\\\vdots\\\vv{\bar{n}_k}{k}\\\VV{0}{k}\\\VV{1}{k}\\\vdots\\\VV{\bar{n}_k}{k}\end{pmatrix}
				=
			\begin{pmatrix}\vv{0}{k}\\\vv{1}{k}-\vv{0}{k}\\\vdots\\\vv{\bar{n}_k}{k}-\vv{0}{k}\\\VV{0}{k}+\sum_{q=1}^{\bar{n}_k}\VV{q}{k}\\\VV{1}{k}\\\vdots\\\VV{\bar{n}_k}{k}\end{pmatrix},
		\eas
		where the \(\VV{\cdot}{k}\) are a standard dual basis to the \(\vv{\cdot}{k}\).
		 \begin{Lemma}\label{lem:dualxyk}
		The \(\XX{\cdot}{k}\) are a standard dual basis to the \(\xx{\cdot}{k}\).
	\end{Lemma}
		\STATE \begin{proof}
	\bas
		\XX{0}{k}\cdot\xx{0}{k}&=&(\VV{0}{k}+\sum_{q=1}^{\bar{n}_k}\VV{q}{k})\cdot\vv{0}{k}=\VV{0}{k}\cdot\vv{0}{k}=1,\\
		\XX{0}{k}\cdot\xx{j}{k}&=&(\VV{0}{k}+\sum_{q=1}^{\bar{n}_k}\VV{q}{k})\cdot(\vv{j}{k}-\vv{0}{k})=\sum_{q=1}^{\bar{n}_k}\VV{q}{k})\cdot\vv{j}{k}-\VV{0}{k}\cdot\vv{0}{k}
		=\sum_{q=1}^{\bar{n}_k}\delta^q_j-1=1-1=0,\quad j=1,\cdots,\bar{n}_k\\
		\XX{i}{k}\cdot\xx{0}{k}&=&\VV{i}{k}\cdot\vv{0}{k}=0,\quad i=1,\cdots,\bar{n}_k\\
		\XX{i}{k}\cdot\xx{j}{k}&=&\VV{i}{k}\cdot(\vv{j}{k}-\vv{0}{k})=\delta^i_j,\quad i,j=1,\cdots,\bar{n}_k
	\eas
	\end{proof}
		\begin{Corollary}\label{cor:dualxy}
		The \(\XX{\cdot}{\cdot}\) are a standard dual basis to the \(\xx{\cdot}{\cdot}\).
	\end{Corollary}
		\STATE The inverse is given by
		\bas
			\begin {pmatrix}\vv{0}{k}\\\vv{1}{k}\\\vdots\\\vv{\bar{n}_k}{k}\\\VV{0}{k}\\\VV{1}{k}\\\vdots\\\VV{\bar{n}_k}{k}\end{pmatrix}
			&=&	\exp(\sum_{\cc=1}^\CC\xx{0}{\cc}\otimes 	\sum_{q=1}^{\bar{n}_\cc} \XX{q}{\cc}\triangleright)
			\begin {pmatrix}\xx{0}{k}\\\xx{1}{k}\\\vdots\\\xx{\bar{n}_k}{k}\\\XX{0}{k}\\\XX{1}{k}\\\vdots\\\XX{\bar{n}_k}{k}\end{pmatrix}
			=\begin {pmatrix}\xx{0}{k}\\\xx{1}{k}+\xx{0}{k}\\\vdots\\\xx{\bar{n}_k}{k}+\xx{0}{k}\\\XX{0}{k}-	\sum_{q=1}^{\bar{n}_k} \XX{q}{k}\\\XX{1}{k}\\\vdots\\\XX{\bar{n}_k}{k}\end{pmatrix}.
		\eas
The order of the definition of the transformation and its inverse is forced by the fact that we know beforehand that the
\(\VV{i}{k}\) form a dual basis to the \(\vv{j}{l}\).
\end{algorithmic}
\end{algorithm}
\begin{Remark}
    In \S \ref{sec:examples}, we refer to Algorithm \ref{alg:ff} all the time, but we remark here that in most cases we follow the quick choice.
		For other types of structure, one may want to develop other algorithms to assign the \(x_i\) to \(\cu{},\bu{}\)-coordinates.
\end{Remark}
\newpage
\section{Examples}\label{sec:examples}
	In this section, we write out the relevant change of coordinates for given \(2\leq N \leq 8\) and \(\CC=1,2\).
	Higher dimensional and more colorful examples would be just as mechanical
	but run into typographical difficulties.  
	Then we introduce a vector field and compute its Jacobi-matrix in terms of \(\cx{},\bx{}\)-coordinates, and
	express the result in terms of \(\FA{}{},\FB{}{}\) and \(\FC{}{}\), illustrating how by simple means a concrete equation can be fit in the theoretical
	description of the network Lie algebra \(\mathfrak{net}_{\CC,N}\).
	This can then be used as a starting point for the bifurcation analysis.

	\subsection{Example with \(N=2,\protect\CC=1\)}
	\def\NN{2}
By applying Algorithm \ref{alg:xbasis} and Definition \ref{def:xy},
	the relation between the coordinates is given by
		\ba\label{eqn:2x1}
			\begin {pmatrix}\vv{0}{1}\\\vv{1}{1}\\\VV{0}{1}\\\VV{1}{1}\end{pmatrix}&=&
			\exp(\xx{0}{1}\otimes 	\XX{1}{1}\triangleright)
			\begin {pmatrix}\xx{0}{1}\\\xx{1}{1}\\\XX{0}{1}\\\XX{1}{1}\end{pmatrix}
			=\begin {pmatrix}\xx{0}{1}\\\xx{1}{1}+\xx{0}{1}\\\XX{0}{1}-	\XX{1}{1}\\\XX{1}{1}\end{pmatrix}
			=\begin {pmatrix}\cx{1}\\\bx{1}+\cx{1}\\\cX{1}-	\bX{1}\\\bX{1}\end{pmatrix}.
		\ea
\begin{Example}\label{exm:dim2col1exm1}
\bas
\dot{x}_1&=&f^1({x_1}, {x_1}, {x_2}, {x_2}),\\
\dot{x}_2&=&f^1({x_2}, {x_1}, {x_1}, {x_2}),\\
\eas
with picture
	\colorlet{co1}{black}
	\colorlet{co2}{orange}
	\colorlet{co3}{violet}
	\colorlet{co4}{blue}
	\colorlet{co5}{green}
\begin{center}
	\colorlet{co1}{black}
	\colorlet{co2}{orange}
	\colorlet{co3}{violet}
	\colorlet{co4}{blue}
	\colorlet{co5}{green}
	\colorlet{c1}{red}
	\colorlet{c2}{red}
	\begin{tikzpicture}
	\node (n1) at (0,0) {1};
	\node (n2) at (3,0) {2};
	\node (z1) at (3,0) {};
	\node (z2) at (6,0) {};
	\foreach \i in {0,...,15} 
		\foreach \x in {1,...,\NN} 
			\foreach \k in {x}	
				\node (n\x\k\i) at ($ (n\x)!0.23!360*\i/16:(z\x) $) {};
		\draw[co2,line width=1pt] (n1x1) to[out=0,in=180] node [sloped] {$>$} (n2x7);
		\draw[co3,line width=1pt] (n1x14) to[out=0,in=180] node [sloped] {$>$} (n2x10);
		\draw[co3,line width=1pt] (n2x6) to[out=180,in=0] node [sloped] {$<$} (n1x2);
		\draw[co4,line width=1pt] (n2x9) to[out=180,in=0] node [sloped] {$<$} (n1x15);
		\foreach \i in 
			{n1x1,n1x14,n2x6,n2x9}
				\fill[c1] (\i) circle [radius=4pt];
		\foreach \i in 
			{n1x9,n1x7,n2x1,n2x7,n2x10,n1x2,n1x15,n2x15}
				\fill[c1!\int] (\i) circle [radius=4pt];
		\draw[co2,line width=1pt] (n1x9) to[loop left] node [sloped] {$$} ();
		\draw[co1,line width=1pt] (n1x7) to[loop left] node [sloped] {$$} ();
		\draw[co4,line width=1pt] (n2x1) to[loop right] node [sloped] {$$} ();
		\draw[co1,line width=1pt] (n2x15) to[loop right] node [sloped] {$$} ();
	\foreach \i in {1,...,\NN}{
		\fill[c\i!\int] (n\i) circle [radius=20pt];
		\draw[c\i!\int,line width=1pt]	(n\i) to[out=180,in=0] node [color=black] {$x_\i$}		(n\i);}
	\end{tikzpicture}
\end{center}	
The Jacobi-matrix is
\bas
J&=&\begin{pmatrix}
{f^1_2} + {f^1_1}&{f^1_4} + {f^1_3}\\
{f^1_3} + {f^1_2}&{f^1_4} + {f^1_1}
\end{pmatrix}.
\eas
with \(\tr{J}=2f^1_1+f^1_2+f^1_4\).
The relation between the coordinates (using Algorithm \ref{alg:ff}) is given by
\bas
\begin{pmatrix}
x_2\\
x_1\\
\end{pmatrix}=\begin{pmatrix}
\vv01
\\
\vv11
\\
\end{pmatrix}=\begin{pmatrix}
\cu1\\
\bu1
\\
\end{pmatrix}.
\eas
Using Algorithm \ref{alg:xbasis} we obtain the Jacobi matrix
\bas
\mathsf{J}&=&\begin{pmatrix}
{f^1_4} + {f^1_3} + {f^1_2} + {f^1_1}& 0\\
{f^1_3} + {f^1_2}&  {f^1_1}- {f^1_3} 
\end{pmatrix},
\eas
and the multipliers are 
\bas
\bigwedge^1&=&\begin{pmatrix}{f^1_4} + {f^1_3} + {f^1_2} + {f^1_1}\end{pmatrix},\\
\bigwedge^2&=& \begin{pmatrix}{f^1_1}-{f^1_3}\end{pmatrix}.
\eas
The Jacobian is
\bas
|\mathsf{J}|&=&|\bigwedge^1|\cdot|\bigwedge^2|=({f^1_4} + {f^1_3} + {f^1_2} + {f^1_1})
(  {f^1_1}- {f^1_3} ).
\eas
\end{Example}
\subsection{Example with \(N=3=2\oplus 1,\protect\CC=2\)}\label{ex:n3c2}
	\def\NN{3}
 By applying Algorithm \ref{alg:xbasis} and Definition \ref{def:xy},
	the relation between the coordinates is given by
		\bas
			\begin {pmatrix}\vv{0}{1}\\\vv{1}{1}\\\vv{0}{2}\\\VV{0}{1}\\\VV{1}{1}\\\VV{0}{2}\end{pmatrix}
			&=&
			\exp(\sum_{\cc=1}^2 \xx{0}{\cc}\otimes 	\sum_{q=1}^{\bar{n}_\cc} \XX{q}{\cc}\triangleright)
			\begin {pmatrix}\xx{0}{1}\\\xx{1}{1}\\\xx{0}{2}\\\XX{0}{1}\\\XX{1}{1}\\\XX{0}{2}\end{pmatrix}
			=\begin {pmatrix}\xx{0}{1}\\\xx{1}{1}+\xx{0}{1}\\\xx{0}{2}\\\XX{0}{1}-\XX{1}{1}\\\XX{1}{1}\\\XX{0}{2}\end{pmatrix}
			=\begin {pmatrix}\cx{1}\\\bx{1}+\cx{1}\\\cx{2}\\\cX{1}-\bX{1}\\\bX{1}\\\cX{2}\end{pmatrix}.
		\eas
\begin{Example}\label{exm:bas3x2}
	We write down the {\em basis form}
	\bas
	f&=&f^1(x_1,x_1,x_2,x_2;x_3)x^1+
	f^1(x_2,x_1,x_1,x_2;x_3)x^2+
	f^2(x_1,x_2;x_3)x^3,
	\eas
 short for
 \bas
	\dot{x}_1&=&f^1(x_1,x_1,x_2,x_2;x_3),\\
	\dot{x}_2&=&f^1(x_2,x_1,x_1,x_2;x_3),\\
	\dot{x}_3&=&f^2(x_1,x_2;x_3).
	\eas
Or, equivalently (cf. Notation \ref{not:pic}),
	\begin{center}
	\colorlet{co1}{black}
	\colorlet{co2}{orange}
	\colorlet{co3}{violet}
	\colorlet{co4}{blue}
	\colorlet{co5}{green}
	\colorlet{c1}{red}
	\colorlet{c2}{red}
	\colorlet{c3}{violet}
	\begin{tikzpicture}
	\node (n1) at (0,0) {1};
	\node (n2) at (3,0) {2};
	\node (n3) at (1.5,-2.5) {3};
	\node (z1) at (3,0) {};
	\node (z2) at (6,0) {};
	\node (z3) at (4.5,-2.5) {};
	\foreach \i in {0,...,15} 
		\foreach \x in {1,...,\NN} 
			\foreach \k in {x}	
				\node (n\x\k\i) at ($ (n\x)!0.23!360*\i/16:(z\x) $) {};
		\draw[co2,line width=1pt] (n1x1) to[out=0,in=180] node [sloped] {$>$} (n2x7);
		\draw[co3,line width=1pt] (n1x15) to[out=0,in=180] node [sloped] {$>$} (n2x9);
		\draw[co1,line width=1.5pt] (n1x11) to[out=310,in=130] node [sloped] {$>$} (n3x6);
		\draw[co5,line width=1pt] (n3x5) to[out=140,in=140+180] node [sloped] {$<$} (n1x12);
		\draw[co5,line width=1pt] (n3x3) to[out=40,in=40+180] node [sloped] {$>$} (n2x12);
		\draw[co3,line width=1pt] (n2x6) to[out=180,in=0] node [sloped] {$<$} (n1x2);
		\draw[co4,line width=1pt] (n2x10) to[out=180,in=0] node [sloped] {$<$} (n1x14);
		\draw[co2,line width=1.5pt] (n2x13) to[out=45+180,in=45] node [sloped] {$<$} (n3x2);
	\foreach \i in {n1x1,n1x11,n1x15,n2x6,n2x10,n2x13}
		\fill[c1] (\i) circle [radius=4pt];
	\foreach \i in {n3x3,n3x5}
		\fill[c3] (\i) circle [radius=4pt];
	\foreach \i in {n1x2,n1x9,n1x7,n1x14,n2x1,n2x15,n2x7,n2x9,n3x2,n3x6}
		\fill[c1!\int] (\i) circle [radius=4pt];
	\foreach \i in {n1x12,n2x12,n3x12}
		\fill[c3!\int] (\i) circle [radius=4pt];
	\draw[co3,line width=1pt] (n3x12) to[loop below] node [sloped] {$$} ();
	\draw[co4,line width=1pt] (n2x1) to[loop right] node [sloped] {$$} ();
	\draw[co1,line width=1pt] (n2x15) to[loop right] node [sloped] {$$} ();
	\draw[co2,line width=1pt] (n1x7) to[loop left] node [sloped] {$$} ();
	\draw[co1,line width=1pt] (n1x9) to[loop left] node [sloped] {$$} ();
	\foreach \i in {1,...,\NN}{
		\fill[c\i!\int] (n\i) circle [radius=20pt];
		\draw[c\i!\int,line width=1pt]	(n\i) to[out=180,in=0] node [color=black] {$x_\i$}		(n\i);}
\end{tikzpicture}
\end{center}	

	Using  Algorithm \ref{alg:ff}  we obtain the identification
	\bas
	\begin {pmatrix}x_1\\x_2\\x_3\\x^1\\x^2\\x^3\end{pmatrix}
	=\begin {pmatrix}\vv{0}{1}\\\vv{1}{1}\\\vv{2}{1}\\\VV{0}{1}\\\VV{1}{1}\\\VV{2}{1}\end{pmatrix}
	=\begin {pmatrix}\cu{1}\\\bu{1}\\\cu{2}\\\cU{1}\\\bU{1}\\\cU{2}\end{pmatrix}.
	\eas
	Applying the transformation we obtain the transformed basis form
 \bas
		f&=&f^1(\cx{1},\cx{1},\bx{1}+\cx{1},\bx{1}+\cx{1};\cx{2})\cX{1}-f^1(\bx{1}+\cx{1},\cx{1},\cx{1},\bx{1}+\cx{1};\cx{2})\cX{1}
\\&&+f^1(\bx{1}+\cx{1},\cx{1},\cx{1},\bx{1}+\cx{1};\cx{2})\bX{1}
		+f^2(\cx{1},\bx{1}+\cx{1};\cx{2})\cX{2}.
  \eas
 The differential equation is
	\bas
		\dcx{1}&=&f^1(\cx{1},\cx{1},\bx{1}+\cx{1},\bx{1}+\cx{1};\cx{2})-f^1(\bx{1}+\cx{1},\cx{1},\cx{1},\bx{1}+\cx{1};\cx{2}),\\
		\dbx{1}&=&f^1(\bx{1}+\cx{1},\cx{1},\cx{1},\bx{1}+\cx{1};\cx{2}),\\
		\dcx{2}&=&f^2(\cx{1},\bx{1}+\cx{1};\cx{2}).
	\eas
	The Jacobi-matrix is spanned by the basis of the \(\sigma\)s:
	\bas
	{J}&=&(f^1_1 \vv{0}{1} +f^1_2\vv{0}{1}+(f^1_3+f^1_4)\vv{1}{1}+f^1_5\vv{0}{2})\otimes\VV{0}{1}
	\\&&+(f^1_1 \vv{1}{1} +f^1_2\vv{0}{1}+f^1_3\vv{0}{1}+f^1_4\vv{1}{1}+f^1_5\vv{0}{2})\otimes\VV{1}{1}
	\\&&+(f^2_1 \vv{0}{1} +f^2_2\vv{1}{1}+f^1_3\vv{0}{2})\otimes\VV{0}{2}.
	\eas
	Then
	\bas
	\mathsf{J}&=&(f^1_1 \cx{1} +f^1_2\cx{1}+(f^1_3+f^1_4)(\bx{1}+\cx{1})+f^1_5\cx{2})\otimes(\cX{1}-\bX{1})
	\\&&+(f^1_1 (\bx{1}+\cx{1}) +f^1_2\cx{1}+f^1_3\cx{1}+f^1_4(\bx{1}+\cx{1})+f^1_5\cx{2})\otimes\bX{1}
	\\&&+(f^2_1 \cx{1} +f^2_2(\bx{1}+\cx{1})+f^2_3\cx{2})\otimes\cX{2}
	\\&=&((f^1_1+f^1_2+f^1_3+f^1_4) \cx{1} +(f^1_3+f^1_4)\bx{1}+f^1_5\cx{2})\otimes\cX{1}
	\\&&+(f^1_1 -f^1_3)\bx{1}\otimes\bX{1}
	\\&&+((f^2_1+f^2_2) \cx{1} +f^2_2\bx{1}+f^2_3\cx{2})\otimes\cX{2}
	\\&=&(f^1_1+f^1_2+f^1_3+f^1_4) \FC{1}{1} 
	+f^1_5\FC{2}{1}
	+(f^2_1+f^2_2) \FC{1}{2} 
	+f^2_3\FC{2}{2}
	+(f^1_1 -f^1_3)\FB{1}{1}
	+(f^1_3+f^1_4)\FA{1}{1}
	+f^2_2\FA{1}{2}.
	\eas
		The Jacobi-matrix is
	\bas
		\mathsf{J}&=&\begin{pmatrix}f^1_1+f^1_2+f^1_3+f^1_4 &f^2_1+f^2_2&0\\
			f^1_5&f^2_3&0\\
		f^1_3+f^1_4&f^2_2&f^1_1 -f^1_3
	\end{pmatrix},
	\eas
	and we see that the multipliers are
 \bas
 \bigwedge^1&=&\begin{pmatrix}f^1_1+f^1_2+f^1_3+f^1_4 &f^2_1+f^2_2\\
			f^1_5&f^2_3
		\end{pmatrix},\\
  \bigwedge^2&=&\begin{pmatrix}f^1_1 -f^1_3\end{pmatrix}.
  \eas
 Thus \(n_1=2\) and \(n_2=1\) and \(m_1=m_2=1\) and one has \(m_1n_1+m_2n_2=3=N\) and \(n_1^2+n_2^2=5\leq 6=N\CC\).
 Notice that in the layered feedforward case \cite{Gracht2021}, if the dynamics of color \(1\) do not depend on color \(2\), that is, with the feedback term \(f^1_5=0\), the eigenvalues are on the diagonal.
\end{Example}
	\def\NN{4}
	\subsection{Example with \(N=\NN=2\oplus 2,\protect\CC=2\)}\label{exm:4x2}
	By applying Algorithm \ref{alg:xbasis} and Definition \ref{def:xy},
	the relation between the coordinates is given by
	\bas
		\begin {pmatrix}\vv{0}{1}\\\vv{1}{1}\\\vv{0}{2}\\\vv{1}{2}\\\VV{0}{1}\\\VV{1}{1}\\\VV{0}{2}\\\VV{1}{2}\end{pmatrix}
		&=&	\exp(\sum_{\cc=1}^2\xx{0}{\cc}\otimes 	\XX{1}{\cc}\triangleright)
		\begin {pmatrix}\xx{0}{1}\\\xx{1}{1}\\\xx{0}{2}\\\xx{1}{2}\\\XX{0}{1}\\\XX{1}{1}\\\XX{0}{2}\\\XX{1}{2}\end{pmatrix}
		=\begin {pmatrix}\xx{0}{1}\\\xx{1}{1}+\xx{0}{1}\\\xx{0}{2}\\\xx{1}{2}+\xx{0}{2}\\\XX{0}{1}-\XX{1}{1}\\\XX{1}{1}\\\XX{0}{2}-\XX{1}{2}\\\XX{1}{2}\end{pmatrix}
		=\begin {pmatrix}\cx{1}\\\bx{1}+\cx{1}\\\cx{2}\\\bx{2}+\cx{2}\\\cX{1}-\bX{1}\\\bX{1}\\\cX{2}-\bX{2}\\\bX{2}\end{pmatrix}.
	\eas
	
	\begin{Example}\label{exm:MR4183886}
		Let
		\bas
		\dot{x}_1&=&f^1(x_1,x_2;x_3,x_4),\\
		\dot{x}_2&=&f^1(x_2,x_1;x_4,x_3),\\
		\dot{x}_3&=&f^2(x_3,x_4;x_1,x_2),\\
		\dot{x}_4&=&f^2(x_4,x_3;x_2,x_1),
		\eas
		with \(\Delta_1=\Delta_2=4\) and picture (cf. notation \ref{not:pic})
	\begin{center}
	\colorlet{co1}{black}
	\colorlet{co2}{orange}
	\colorlet{co3}{violet}
	\colorlet{co4}{blue}
	\colorlet{co5}{green}
	\colorlet{c1}{red}
	\colorlet{c2}{red}
	\colorlet{c3}{violet}
	\colorlet{c4}{violet}
	\begin{tikzpicture}
	\node (n1) at (0,0) {1};
	\node (n2) at (3,0) {2};
	\node (n3) at (0,-2.5) {3};
	\node (n4) at (3,-2.5) {4};
	\node (z1) at (3,0) {};
	\node (z2) at (6,0) {};
	\node (z3) at (3,-2.5) {};
	\node (z4) at (6,-2.5) {};
	\foreach \i in {0,...,15} 
		\foreach \x in {1,...,\NN} 
			\foreach \k in {x}	
				\node (n\x\k\i) at ($ (n\x)!0.23!360*\i/16:(z\x) $) {};
		 \draw[co2,line width=1pt]	(n1x0) to[out=0,in=180] node [sloped] {$>$}	(n2x8);
		\draw[co3,line width=1pt]	(n1x10) to[out=270,in=90] node [sloped] {$>$}	(n3x6);
		\draw[co4,line width=1pt]	(n1x14) to[out=330,in=150] node [sloped]{$>$}	(n4x6);
		\draw[co2,line width=1pt]	(n2x7) to[out=180,in=0] node [sloped] {$<$}	(n1x1);
		\draw[co4,line width=1pt]	(n2x10) to[out=200,in=50] node [sloped]{$<$}	(n3x4);
		\draw[co3,line width=1pt]	(n2x13) to[out=270,in=90] node [sloped]{$>$}	(n4x3);
		 \draw[co4,line width=1.5pt]	(n3x2) to[out=40,in=250] node [sloped]{$>$}	(n2x12);
		\draw[co3,line width=1.5pt]	(n3x5) to[out=90,in=270] node [sloped]{$>$}	(n1x11);
		\draw[co2,line width=1.5pt]	(n3x0) to[out=0,in=180] node [sloped] {$>$}	(n4x8);
        \draw[co3,line width=1.5pt]	(n4x2) to[out=90,in=270] node [sloped] {$>$}	(n2x14);
		\draw[co4,line width=1.5pt]	(n4x5) to[out=140,in=320] node [sloped] {$<$}	(n1x15);
		\draw[co2,line width=1.5pt]	(n4x9) to[out=180,in=0] node [sloped] {$<$}	(n3x15);
	\foreach \i in 
		{n1x0,n1x10,n1x14,n2x7,n2x10,n2x13}
			\fill[c1] (\i) circle [radius=4pt];
	\foreach \i in 
		{n3x2,n3x5,n3x0,n4x2,n4x5,n4x9}
			\fill[c3] (\i) circle [radius=4pt];
	\foreach \i in 
		{n1x8,n2x0,n1x1,n2x8,n3x4,n3x6,n4x3,n4x6}
			\fill[c1!\int] (\i) circle [radius=4pt];
	\foreach \i in 
		{n4x0,n3x8,n1x11,n1x15,n2x12,n2x14,n3x15,n4x8}
			\fill[c3!\int] (\i) circle [radius=4pt];
	\draw[co1,line width=1pt] (n2x0) to[loop right] node [sloped] {$$} ();
	\draw[co1,line width=1pt] (n4x0) to[loop right] node [sloped] {$$} ();
	\draw[co1,line width=1pt] (n1x8) to[loop left] node [sloped] {$$} ();
	\draw[co1,line width=1pt] (n3x8) to[loop left] node [sloped] {$$} ();
	\foreach \i in {1,...,\NN}{
		\fill[c\i!\int] (n\i) circle [radius=20pt];
		\draw[c\i!\int,line width=1pt]	(n\i) to[out=180,in=0] node [color=black] {$x_\i$}		(n\i);}
\end{tikzpicture}
\end{center}	

	Using  Algorithm \ref{alg:ff}  we obtain the identification
	\bas
	\begin {pmatrix}x_1\\x_2\\x_3\\x_4\\x^1\\x^2\\x^3\\x^4\end{pmatrix}
		=\begin {pmatrix}\vv{0}{1}\\\vv{1}{1}\\\vv{0}{2}\\\vv{1}{2}\\\VV{0}{1}\\\VV{1}{1}\\\VV{0}{2}\\\VV{1}{2}\end{pmatrix}
		=\begin {pmatrix}\cu{1}\\\bu{1}\\\cu{2}\\\bu{2}\\\cU{1}\\\bU{1}\\\cU{2}\\\bU{2}\end{pmatrix}.
	\eas
	Applying the transformation we obtain
	\bas
		\dcx{1}&=&f^1(\cx{1},\bx{1}+\cx{1},\cx{2},\bx{2}+\cx{2})-f^1(\bx{1}+\cx{1},\cx{1},\bx{2}+\cx{2},\cx{2}),\\
		\dbx{1}&=&f^1(\bx{1}+\cx{1},\cx{1},\bx{2}+\cx{2},\cx{2}),\\
		\dcx{2}&=&f^2(\cx{2},\bx{2}+\cx{2},\cx{1},\bx{1}+\cx{1})-f^2(\bx{2}+\cx{2},\cx{2},\bx{1}+\cx{1},\cx{1}),\\
		\dbx{2}&=&f^2(\bx{2}+\cx{2},\cx{2},\bx{1}+\cx{1},\cx{1}).
	\eas
	The Jacobi-matrix in tensor notation is
	\bas
		{J}&=&(f^1_1\vv{0}{1}+f^1_2\vv{1}{1}+f^1_3\vv{0}{2}+f^1_4\vv{1}{2})\otimes\VV{0}{1}
		+(f^1_1\vv{1}{1}+f^1_2\vv{0}{1}+f^1_3\vv{1}{2}+f^1_4\vv{0}{2})\otimes\VV{1}{1}
		\\&&+(f^2_1\vv{0}{2}+f^2_2\vv{1}{2}+f^2_3\vv{0}{1}+f^2_4\vv{1}{1})\otimes\VV{0}{2}
		+(f^2_1\vv{1}{2}+f^2_2\vv{0}{2}+f^2_3\vv{1}{1}+f^2_4\vv{0}{1})\otimes\VV{1}{2}
		\\&=&(f^1_1\cx{1}+f^1_2(\bx{1}+\cx{1})+f^1_3\cx{2}+f^1_4(\bx{2}+\cx{2}))\otimes\cX{1}
		+(f^1_1\bx{1}-f^1_2\bx{1}+f^1_3\bx{2}-f^1_4\bx{2})\otimes\bX{1}
		\\&&+(f^2_1\cx{2}+f^2_2(\bx{2}+\cx{2})+f^2_3\cx{1}+f^2_4(\bx{1}+\cx{1}))\otimes\cX{2}
		+(f^2_1\bx{2}-f^2_2\bx{2}+f^2_3\bx{1}-f^2_4\bx{1})\otimes\bX{2}
		\\&=&f^1_1\FC{1}{1}+f^1_2(\FA{1}{1}+\FC{1}{1})+f^1_3\FC{2}{1}+f^1_4(\FA{2}{1}+\FC{2}{1})
		+f^1_1\FB{1}{1}-f^1_2\FB{1}{1}+f^1_3\FB{2}{1}-f^1_4\FB{2}{1}
		\\&&+(f^2_1\FC{2}{2}+f^2_2(\FA{2}{2}+\FC{2}{2})+f^2_3\FC{1}{2}+f^2_4(\FA{1}{2}+\FC{1}{2})
		+f^2_1\FB{2}{2}-f^2_2\FB{2}{2}+f^2_3\FB{1}{2}-f^2_4\FB{1}{2}
		\\&=&
		(f^1_1 +f^1_2)\FC{1}{1}
		+(f^2_3 +f^2_4)\FC{1}{2}
		+(f^1_3 +f^1_4)\FC{2}{1}
		+(f^2_1 +f^2_2)\FC{2}{2}
		\\&&+(f^1_1 -f^1_2)\FB{1}{1}
		+(f^2_3 -f^2_4)\FB{1}{2}
		+(f^1_3 -f^1_4)\FB{2}{1}
		+(f^2_1 -f^2_2)\FB{2}{2}
		\\&&+f^1_2\FA{1}{1}
		+f^2_4\FA{1}{2}
		+f^1_4\FA{2}{1}
		+f^2_2\FA{2}{2}=\mathsf{J}.
		\eas
		Then the matrix is
		\bas
		\mathsf{J}&=&\begin{pmatrix}
			f^1_1 +f^1_2&f^2_3 +f^2_4&0&0\\
			f^1_3 +f^1_4&f^2_1 +f^2_2&0&0\\
			f^1_2&f^2_4&f^1_1 -f^1_2&f^2_3 -f^2_4\\
			f^1_4&f^2_2&f^1_3 -f^1_4&f^2_1 -f^2_2
		\end{pmatrix}.
		\eas
The multipliers are
\bas
\bigwedge^1&=&\begin{pmatrix}
			f^1_1 +f^1_2&f^2_3 +f^2_4\\
			f^1_3 +f^1_4&f^2_1 +f^2_2
  \end{pmatrix},\\
  \bigwedge^2&=&\begin{pmatrix}
  f^1_1 -f^1_2&f^2_3 -f^2_4\\
			f^1_3 -f^1_4&f^2_1 -f^2_2
		\end{pmatrix}.
  \eas

Thus \(n_1=n_2=2\) and \(m_1=m_2=1\) and one has \(m_1n_1+m_2n_2=4=N\) and \(n_1^2+n_2^2=8=\CC N\).
The Jacobian is
\bas
|{\mathsf{J}}|&=&|{\bigwedge^1}| \cdot |{\bigwedge^2}|
\eas
and \(\tr{\mathsf{J}}=\tr{\bigwedge^1}+\tr{\bigwedge^2}=2f^1_1+2f^2_1\).
\end{Example}
\begin{Example}\label{exm:dim4col2exm4}
\bas
\dot{x}_{1}&=&{f^{{1}}}({x_{1}},{x_{2}},{x_{3}},{x_{4}}),\\
\dot{x}_{2}&=&{f^{{1}}}({x_{2}},{x_{1}},{x_{4}},{x_{3}}),\\
\dot{x}_{3}&=&{f^{{2}}}({x_{3}},{x_{4}}),\\
\dot{x}_{4}&=&{f^{{2}}}({x_{4}},{x_{3}}).\\
\eas
The Jacobi-matrix is
\bas
J&=&\begin{pmatrix}
{f^{{1}}_{{1}}}&{f^{{1}}_{{2}}}&{f^{{1}}_{{3}}}&{f^{{1}}_{{4}}}\\
{f^{{1}}_{{2}}}&{f^{{1}}_{{1}}}&{f^{{1}}_{{4}}}&{f^{{1}}_{{3}}}\\
 0& 0&{f^{{2}}_{{1}}}&{f^{{2}}_{{2}}}\\
 0& 0&{f^{{2}}_{{2}}}&{f^{{2}}_{{1}}}
\end{pmatrix}.
\eas
Trace =\(2{f^{{1}}_{{1}}} + 2{f^{{2}}_{{1}}}\).

	\begin{center}
	\colorlet{co1}{black}
	\colorlet{co2}{orange}
	\colorlet{co3}{violet}
	\colorlet{co4}{blue}
	\colorlet{co5}{green}
	\colorlet{c1}{red}
	\colorlet{c2}{red}
	\colorlet{c3}{violet}
	\colorlet{c4}{violet}
	\begin{tikzpicture}
	\node (n1) at (0,0) {1};
	\node (n2) at (3,0) {2};
	\node (n3) at (0,-2.5) {3};
	\node (n4) at (3,-2.5) {4};
	\node (z1) at (3,0) {};
	\node (z2) at (6,0) {};
	\node (z3) at (3,-2.5) {};
	\node (z4) at (6,-2.5) {};
	\foreach \i in {0,...,15} 
		\foreach \x in {1,...,\NN} 
			\foreach \k in {x}	
				\node (n\x\k\i) at ($ (n\x)!0.23!360*\i/16:(z\x) $) {};
		 \draw[co2,line width=1pt]	(n1x0) to[out=0,in=180] node [sloped] {$>$}	(n2x8);
		\draw[co3,line width=1pt]	(n1x10) to[out=270,in=90] node [sloped] {$<$}	(n3x6);
		\draw[co4,line width=1pt]	(n1x14) to[out=330,in=150] node [sloped]{$<$}	(n4x6);
		\draw[co2,line width=1pt]	(n2x7) to[out=180,in=0] node [sloped] {$>$}	(n1x1);
		\draw[co4,line width=1pt]	(n2x10) to[out=200,in=50] node [sloped]{$>$}	(n3x4);
		\draw[co3,line width=1pt]	(n2x13) to[out=270,in=90] node [sloped]{$<$}	(n4x3);
		\draw[co2,line width=1.5pt]	(n3x0) to[out=0,in=180] node [sloped] {$>$}	(n4x8);
		\draw[co2,line width=1.5pt]	(n4x9) to[out=180,in=0] node [sloped] {$<$}	(n3x15);
	\foreach \i in 
		{n1x0,n1x10,n1x14,n2x7,n2x10,n2x13}
			\fill[c1] (\i) circle [radius=4pt];
	\foreach \i in 
		{n3x0,n4x9}
			\fill[c3] (\i) circle [radius=4pt];
	\foreach \i in 
		{n1x8,n2x0,n1x1,n2x8,n3x4,n3x6,n4x3,n4x6}
			\fill[c1!\int] (\i) circle [radius=4pt];
	\foreach \i in 
		{n4x0,n3x8,n3x15,n4x8}
			\fill[c3!\int] (\i) circle [radius=4pt];
	\draw[co1,line width=1pt] (n2x0) to[loop right] node [sloped] {$$} ();
	\draw[co1,line width=1pt] (n4x0) to[loop right] node [sloped] {$$} ();
	\draw[co1,line width=1pt] (n1x8) to[loop left] node [sloped] {$$} ();
	\draw[co1,line width=1pt] (n3x8) to[loop left] node [sloped] {$$} ();
	\foreach \i in {1,...,\NN}{
		\fill[c\i!\int] (n\i) circle [radius=20pt];
		\draw[c\i!\int,line width=1pt]	(n\i) to[out=180,in=0] node [color=black] {$x_\i$}		(n\i);}
\end{tikzpicture}
\end{center}	
\bas
\mathsf{J}&=&
\begin{pmatrix}
{f^{{1}}_{{1}}} + {f^{{1}}_{{2}}}& 0&0&0\\
{f^{{1}}_{{3}}} + {f^{{1}}_{{4}}}&{f^{{2}}_{{1}}} + {f^{{2}}_{{2}}}&0&0\\
{f^{{1}}_{{2}}}& 0&{f^{{1}}_{{1}}} - {f^{{1}}_{{2}}}& 0\\
{f^{{1}}_{{4}}}&{f^{{2}}_{{2}}}&{f^{{1}}_{{3}}} - {f^{{1}}_{{4}}}&{f^{{2}}_{{1}}} - {f^{{2}}_{{2}}}
\end{pmatrix}.
\eas
\end{Example}

\subsection{Example with $N=5,\protect\CC=1$}\label{sec:5x1}
	\def\NN{5}
We take our example from \cite{Gracht2021}:
\bas
\dot{x}_1&=&f^1({x_1}, {x_2}, {x_3}, {x_4}, {x_5}), \\
\dot{x}_2&=&f^1({x_2}, {x_5}, {x_4}, {x_5}, {x_5}), \\
\dot{x}_3&=&f^1({x_3}, {x_4}, {x_5}, {x_5}, {x_5}), \\
\dot{x}_4&=&f^1({x_4}, {x_5}, {x_5}, {x_5}, {x_5}), \\
\dot{x}_5&=&f^1({x_5}, {x_5}, {x_5}, {x_5}, {x_5}), 
\eas
with (equivalent) picture (see Notation \ref{not:pic} 
for the meaning of all the different cells, arrows, and colors):
	\colorlet{co1}{black}
	\colorlet{co2}{orange}
	\colorlet{co3}{violet}
	\colorlet{co4}{blue}
	\colorlet{co5}{green}
	\begin{center}
	\colorlet{co1}{black}
	\colorlet{co2}{orange}
	\colorlet{co3}{violet}
	\colorlet{co4}{blue}
	\colorlet{co5}{green}
	\colorlet{c1}{red}
	\colorlet{c2}{red}
	\colorlet{c3}{red}
	\colorlet{c4}{red}
	\colorlet{c5}{red}
	\begin{tikzpicture}
	\node (n1) at (0, 0) {1};
	\node (n2) at (3, 0) {2};
	\node (n3) at (-1, -2.5) {3};
	\node (n4) at (4, -2.5) {4};
	\node (n5) at (1.5, -5) {5};
	\node (z1) at (3, 0) {};
	\node (z2) at (6, 0) {};
	\node (z3) at (2, -2.5) {};
	\node (z4) at (7, -2.5) {};
	\node (z5) at (4.5, -5) {};
	\foreach \i in {0,...,15} 
		\foreach \x in {1,...,\NN} 
			\foreach \k in {x}	
				\node (n\x\k\i) at ($ (n\x)!0.23!360*\i/16:(z\x) $) {};
		\draw[co2,line width=1pt]	(n5x15) to[out=40,in=240] node [sloped] {$>$}	(n4x12);
		\draw[co3,line width=1pt]	(n5x0) to[out=40,in=240] node [sloped] {$>$}	(n4x11);
		\draw[co4,line width=1pt]	(n5x1) to[out=40,in=240] node [sloped] {$>$}	(n4x10);
		\draw[co5,line width=1pt]	(n5x2) to[out=40,in=240] node [sloped] {$>$}	(n4x9);
		\draw[co3,line width=1pt]	(n5x9) to[out=140,in=310] node [sloped] {$<$}	(n3x11);
		\draw[co4,line width=1pt]	(n5x8) to[out=140,in=310] node [sloped] {$<$}	(n3x12);
		\draw[co5,line width=1pt]	(n5x7) to[out=140,in=310] node [sloped] {$<$}	(n3x13);
		\draw[co2,line width=1pt]	(n5x3) to[out=70,in=260] node [sloped] {$>$}	(n2x12);
		\draw[co4,line width=1pt]	(n5x4) to[out=70,in=260] node [sloped] {$>$}	(n2x11);
		\draw[co5,line width=1pt]	(n5x5) to[out=70,in=260] node [sloped] {$>$}	(n2x10);
		\draw[co5,line width=1pt]	(n5x6) to[out=100,in=290] node [sloped] {$<$}	(n1x13);
		\draw[co2,line width=1pt]	(n4x7) to[out=180,in=0] node [sloped] {$<$}	(n3x1);
		\draw[co4,line width=1pt]	(n4x6) to[out=140,in=330] node [sloped] {$<$}	(n1x15);
		\draw[co3,line width=1pt]	(n4x5) to[out=120,in=300] node [sloped] {$<$}	(n2x13);
		\draw[co3,line width=1pt]	(n3x4) to[out=70,in=250] node [sloped] {$>$}	(n1x10);
		\draw[co2,line width=1pt]	(n2x8) to[out=180,in=0] node [sloped] {$<$}	(n1x0);
	\foreach \i in {n5x3,n5x4,n5x5,n5x6,n5x15,n5x0,n5x1,n5x2,n5x7,n5x8,n5x9,n4x6,n4x7,n4x5,n3x4,n2x8}
			\fill[c1] (\i) circle [radius=4pt];
	\foreach \i in {n3x8,n5x10,n5x11,n5x12,n5x13,n5x14,n1x8,n2x0,n4x0,n4x9,n4x10,n4x11,n4x12,n3x11,n3x12,n3x13,n2x10,n2x11,n2x12,n1x13,n3x1,n1x15,n2x13,n1x10,n1x0}
			\fill[c1!\int] (\i) circle [radius=4pt];
	\draw[co5,line width=1pt] (n5x10) to[loop below] node [sloped] {$$} ();
	\draw[co4,line width=1pt] (n5x11) to[loop below] node [sloped] {$$} ();
	\draw[co3,line width=1pt] (n5x12) to[loop below] node [sloped] {$$} ();
	\draw[co2,line width=1pt] (n5x13) to[loop below] node [sloped] {$$} ();
	\draw[co1,line width=1pt] (n5x14) to[loop below] node [sloped] {$$} ();
	\draw[co1,line width=1pt] (n2x0) to[loop right] node [sloped] {$$} ();
	\draw[co1,line width=1pt] (n4x0) to[loop right] node [sloped] {$$} ();
	\draw[co1,line width=1pt] (n1x8) to[loop left] node [sloped] {$$} ();
	\draw[co1,line width=1pt] (n3x8) to[loop left] node [sloped] {$$} ();
	\foreach \i in {1,...,\NN}{
		\fill[c\i!\int] (n\i) circle [radius=20pt];
		\draw[c\i!\int,line width=1pt]	(n\i) to[out=180,in=0] node [color=black] {$x_\i$}		(n\i);}
\end{tikzpicture}
\end{center}

 By applying Algorithm \ref{alg:xbasis} and Definition \ref{def:xy},
	the relation between the coordinates is given by
	\bas
		\begin {pmatrix}\cu{1}\\\bu{1}\\\bu{2}\\\bu{3}\\\bu{4}\\\cU{1}\\\bU{1}\\\bU{2}\\\bU{3}\\\bU{4}\end{pmatrix}
		=\begin {pmatrix}\vv{0}{1}\\\vv{1}{1}\\\vv{2}{1}\\\vv{3}{1}\\\vv{4}{1}\\\VV{0}{1}\\\VV{1}{1}\\\VV{2}{1}\\\VV{3}{1}\\\VV{4}{1}\end{pmatrix}
		&=&	\exp(\xx{0}{1}\otimes 	\sum_{q=1}^{4} \XX{q}{1}\triangleright)
		\begin {pmatrix}\xx{0}{1}\\\xx{1}{1}\\\xx{2}{1}\\\xx{3}{1}\\\xx{4}{1}\\\XX{0}{1}\\\XX{1}{1}\\\XX{2}{1}\\\XX{3}{1}\\\XX{4}{1}\end{pmatrix}
			=\begin {pmatrix}\xx{0}{1}\\\xx{1}{1}+\xx{0}{1}\\\xx{2}{1}+\xx{0}{1}\\\xx{3}{1}+\xx{0}{1}\\\xx{4}{1}+\xx{0}{1}\\\XX{0}{1}-	\sum_{q=1}^{4} \XX{q}{1}\\
			\XX{1}{1}\\\XX{2}{1}\\\XX{3}{1}\\\XX{4}{1}\end{pmatrix}
			=\begin {pmatrix}\cx{1}\\\bx{1}+\cx{1}\\\bx{2}+\cx{1}\\\bx{3}+\cx{1}\\\bx{4}+\cx{1}\\\cX{1}-	\sum_{q=1}^{4} \bX{q}\\\bX{1}\\\bX{2}\\\bX{3}\\\bX{4}\end{pmatrix}.
	\eas
\begin{Example}\label{exm:dim5col1exm1a}
For the introduction and the picture, see Example \ref{exm:dim5col1exm1}.
The Jacobi-matrix is
\bas
J&=&\begin{pmatrix}
{f^{{1}}_{{1}}}&{f^{{1}}_{{2}}}&{f^{{1}}_{{3}}}&{f^{{1}}_{{4}}}&{f^{{1}}_{{5}}}\\
 0&{f^{{1}}_{{1}}}& 0&{f^{{1}}_{{3}}}&{f^{{1}}_{{2}}} + {f^{{1}}_{{4}}} + {f^{{1}}_{{5}}}\\
 0& 0&{f^{{1}}_{{1}}}&{f^{{1}}_{{2}}}&{f^{{1}}_{{3}}} + {f^{{1}}_{{4}}} + {f^{{1}}_{{5}}}\\
 0& 0& 0&{f^{{1}}_{{1}}}&{f^{{1}}_{{2}}} + {f^{{1}}_{{3}}} + {f^{{1}}_{{4}}} + 
      {f^{{1}}_{{5}}}\\
 0& 0& 0& 0&{f^{{1}}_{{1}}} + {f^{{1}}_{{2}}} + {f^{{1}}_{{3}}} + 
      {f^{{1}}_{{4}}} + {f^{{1}}_{{5}}}
\end{pmatrix}
.
\eas
The relation between the coordinates (using Algorithm \ref{alg:ff}) is given by
\bas
\begin{pmatrix}
x_5\\
x_1\\
x_2\\
x_3\\
x_4
\end{pmatrix}=\begin{pmatrix}
\vv01
\\
\vv11
\\
\vv21
\\
\vv31
\\
\vv41
\end{pmatrix}=\begin{pmatrix}
\cu1\\
\bu1
\\
\bu2
\\
\bu3
\\
\bu4
\end{pmatrix}.
\eas
	Using Algorithm \ref{alg:xbasis} we obtain
the Jacobi-matrix
\bas
\mathsf{J}&=&
\begin{pmatrix}
{f^{{1}}_{{1}}} + {f^{{1}}_{{2}}} + {f^{{1}}_{{3}}} + 
      {f^{{1}}_{{4}}} + {f^{{1}}_{{5}}}&0&0&0&0\\
 0&{f^{{1}}_{{1}}}& 0& 0& 0\\
 0&{f^{{1}}_{{2}}}&{f^{{1}}_{{1}}}& 0& 0\\
 0&{f^{{1}}_{{3}}}& 0&{f^{{1}}_{{1}}}& 0\\
 0&{f^{{1}}_{{4}}}&{f^{{1}}_{{3}}}&{f^{{1}}_{{2}}}&{f^{{1}}_{{1}}}
\end{pmatrix}
,
\eas
with multipliers
\bas
\bigwedge^1&=&\begin{pmatrix}{f^{{1}}_{{1}}} + {f^{{1}}_{{2}}} + {f^{{1}}_{{3}}} + 
      {f^{{1}}_{{4}}} + {f^{{1}}_{{5}}}\end{pmatrix},\\
\bigwedge^2&=&\begin{pmatrix}
{f^1_1}& 0\\
 {f^1_2}&{f^1_1}
\end{pmatrix},
\eas
with \(n_1=1, n_2=2\) and \(m_1=1, m_2=2\), satisfying \(m_1 n_1+m_2n_2=\NN=N\) and \(n_1^2+n_2^2=\NN=N\).
The Jacobian is
\bas
|\mathsf{J}|&=&|\bigwedge^1|^{m_1}\cdot|\bigwedge^2|^{m_2}=({f^1_1})^4
({f^{{1}}_{{1}}} + {f^{{1}}_{{2}}} + {f^{{1}}_{{3}}} + 
      {f^{{1}}_{{4}}} + {f^{{1}}_{{5}}}).
\eas
\end{Example}

\subsection{Example with \(N=8,\protect\CC=1\)}
\begin{Example}\label{exm:dim8col1exm1}
In \cite{MR4183886} one color (homogeneous) networks are studied in which every cell has at most one input from every other cell (itself included). This is called asymmetry.
The restriction to one color is made to avoid notational complexity.
The asymmetry condition is not important, which is a good thing,
since the given example does not satisfy the condition, but does produce the results predicted by theorem 1.2 in loc. cit.
This allows the application of representation theory to find matrices \(\bigwedge^i\), such that the Jacobian is a product of
the determinants of these matrices (which are therefore called multipliers). This gives a way to analyze the spectrum of the linearized system and its degeneracies.
We take \cite[Example 4.9]{MR4183886} to apply our decomposition method and then look at the result in the light of the multipliers.
\bas
\dot{x}_1&=&f^1({x_1}, {x_2}, {x_3}, {x_4}, {x_5}, {x_6}, {x_7}, {x_8}),\\
\dot{x}_2&=&f^1({x_2}, {x_6}, {x_4}, {x_8}, {x_2}, {x_6}, {x_7}, {x_8}),\\
\dot{x}_3&=&f^1({x_3},{x_5},{x_7},{x_3},{x_8},{x_6},{x_7},{x_8}),\\
\dot{x}_4&=&f^1({x_4},{x_2},{x_7},{x_4},{x_8},{x_6},{x_7},{x_8}),\\
\dot{x}_5&=&f^1({x_5},{x_6},{x_3},{x_8},{x_5},{x_6},{x_7},{x_8}),\\
\dot{x}_6&=&f^1({x_6},{x_6},{x_8},{x_8},{x_6},{x_6},{x_7},{x_8}),\\
\dot{x}_7&=&f^1({x_7},{x_8},{x_7},{x_7},{x_8},{x_6},{x_7},{x_8}),\\
\dot{x}_8&=&f^1({x_8},{x_6},{x_7},{x_8},{x_8},{x_6},{x_7},{x_8}).
\eas
The Jacobi-matrix is
\bas
J&=&\begin{pmatrix}
{f^{{1}}_{{1}}}&{f^{{1}}_{{2}}}&{f^{{1}}_{{3}}}&{f^{{1}}_{{4}}}&{f^{{1}}_{{5}}}&{f^{{1}}_{{6}}}&{f^{{1}}_{{7}}}&{f^{{1}}_{{8}}}\\
 0&{f^{{1}}_{{1}}} + {f^{{1}}_{{5}}}& 0&{f^{{1}}_{{3}}}& 0&{f^{{1}}_{{2}}} + {f^{{1}}_{{6}}}&{f^{{1}}_{{7}}}&{f^{{1}}_{{4}}} + {f^{{1}}_{{8}}}\\
 0& 0&{f^{{1}}_{{1}}} + {f^{{1}}_{{4}}}& 0&{f^{{1}}_{{2}}}&{f^{{1}}_{{6}}}&{f^{{1}}_{{3}}} + {f^{{1}}_{{7}}}&{f^{{1}}_{{5}}} + {f^{{1}}_{{8}}}\\
 0&{f^{{1}}_{{2}}}& 0&{f^{{1}}_{{1}}} + {f^{{1}}_{{4}}}& 0&{f^{{1}}_{{6}}}&{f^{{1}}_{{3}}} + {f^{{1}}_{{7}}}&{f^{{1}}_{{5}}} + {f^{{1}}_{{8}}}\\
 0& 0&{f^{{1}}_{{3}}}& 0&{f^{{1}}_{{1}}} + {f^{{1}}_{{5}}}&{f^{{1}}_{{2}}} + {f^{{1}}_{{6}}}&{f^{{1}}_{{7}}}&{f^{{1}}_{{4}}} + {f^{{1}}_{{8}}}\\
 0& 0& 0& 0& 0&{f^{{1}}_{{1}}} + {f^{{1}}_{{2}}} + {f^{{1}}_{{5}}} + 
      {f^{{1}}_{{6}}}&{f^{{1}}_{{7}}}&{f^{{1}}_{{3}}} + {f^{{1}}_{{4}}} + {f^{{1}}_{{8}}}\\
 0& 0& 0& 0& 0&{f^{{1}}_{{6}}}&{f^{{1}}_{{1}}} + {f^{{1}}_{{3}}} + {f^{{1}}_{{4}}} + 
      {f^{{1}}_{{7}}}&{f^{{1}}_{{2}}} + {f^{{1}}_{{5}}} + {f^{{1}}_{{8}}}\\
 0& 0& 0& 0& 0&{f^{{1}}_{{2}}} + {f^{{1}}_{{6}}}&{f^{{1}}_{{3}}} + {f^{{1}}_{{7}}}&{f^{{1}}_{{1}}} + {f^{{1}}_{{4}}} + {f^{{1}}_{{5}}} + 
      {f^{{1}}_{{8}}}
\end{pmatrix}.
\eas
Its trace is
\bas
\tr{J}&=&8{f^1_1}+ {f^1_2}+ {f^1_3} +4{f^1_4} +4{f^1_5} +{f^1_6}  +{f^1_7}  +{f^1_8}  
\\&=&({f^1_1}+ {f^1_2}+ {f^1_3} +{f^1_4} +{f^1_5} +{f^1_6}  +{f^1_7}  +{f^1_8}  )
+{f^1_1} +3({f^1_4} +{f^1_5}+2f^1_1),
\eas
suggesting the existence of two one-dimensional blocks \(\bigwedge^{1,2}\) with multiplicity \(m_{1,2}=1\) and one irreducible  two-dimensional block \(\bigwedge^3\) with
multiplicity \(m_3=3\), so that \(m_1n_1+m_2n_2+m_3n_3=8\) and \(n_1^2+n_2^2+n_3^2\leq 8\),
cf. \cite[Proposition 6.8]{MR4183886}.
The relation between the coordinates (using Algorithm \ref{alg:ff}) is given by
\bas
\begin{pmatrix}
x_8\\
x_1\\
x_2\\
x_3\\
x_4\\
x_5\\
x_6\\
x_7\\
\end{pmatrix}=\begin{pmatrix}
\vv0{}
\\
\vv1{}
\\
\vv2{}
\\
\vv3{}
\\
\vv4{}
\\
\vv5{}
\\
\vv6{}
\\
\vv7{}
\\
\end{pmatrix}=\begin{pmatrix}
\cu1\\
\bu1
\\
\bu2
\\
\bu3
\\
\bu4
\\
\bu5
\\
\bu6
\\
\bu7
\\
\end{pmatrix}.
\eas
Using Algorithm \ref{alg:xbasis} we obtain the Jacobi-matrix
\bas
\mathsf{J}&=&
\begin{pmatrix}
{f^{{1}}_{{1}}} + {f^{{1}}_{{2}}} + {f^{{1}}_{{3}}} + 
      {f^{{1}}_{{4}}} + {f^{{1}}_{{5}}} + {f^{{1}}_{{6}}} + 
      {f^{{1}}_{{7}}} + {f^{{1}}_{{8}}}&0&0&0&0&0&0&0\\
 0&{f^{{1}}_{{1}}}& 0& 0& 0& 0& 0& 0\\
 0&{f^{{1}}_{{2}}}&{f^{{1}}_{{1}}} + {f^{{1}}_{{5}}}& 0&{f^{{1}}_{{2}}}& 0& 0& 0\\
 0&{f^{{1}}_{{3}}}& 0&{f^{{1}}_{{1}}} + {f^{{1}}_{{4}}}& 0&{f^{{1}}_{{3}}}& 0& 0\\
 0&{f^{{1}}_{{4}}}&{f^{{1}}_{{3}}}& 0&{f^{{1}}_{{1}}} + {f^{{1}}_{{4}}}& 0& 0& 0\\
 0&{f^{{1}}_{{5}}}& 0&{f^{{1}}_{{2}}}& 0&{f^{{1}}_{{1}}} + {f^{{1}}_{{5}}}& 0& 0\\
{f^{{1}}_{{2}}} + {f^{{1}}_{{6}}}& - {f^{{1}}_{{2}}}& 0& - {f^{{1}}_{{2}}}& - {f^{{1}}_{{2}}}& 0&{f^{{1}}_{{1}}} + {f^{{1}}_{{5}}}& - {f^{{1}}_{{2}}}\\
{f^{{1}}_{{3}}} + {f^{{1}}_{{7}}}& - {f^{{1}}_{{3}}}& - {f^{{1}}_{{3}}}& 0& 0& - {f^{{1}}_{{3}}}& - {f^{{1}}_{{3}}}&{f^{{1}}_{{1}}} + {f^{{1}}_{{4}}}
\end{pmatrix}.
\eas

Using some elementary (colored) linear algebra (changing the numbering of the \(\bu{}\) in the process), we obtain
\bas
\mathsf{J}&=&
\begin{pmatrix}
{f^1_1} + {f^1_2} + {f^1_3} + {f^1_4} + {f^1_5} + {f^1_6} + {f^1_7} + {f^1_8}&0&0&0&0&0&0&0\\
 0&{f^1_1}& 0& 0& 0& 0& 0& 0\\
 0& 0&{f^1_1} + {f^1_4}&{f^1_3}& 0& 0& 0& 0\\
 0& 0&{f^1_2}&{f^1_1} + {f^1_5}& 0& 0& 0& 0\\
 0& 0& 0& 0&{f^1_1} + {f^1_4}&{f^1_3}& 0& 0\\
 0& 0& 0& 0&{f^1_2}&{f^1_1} + {f^1_5}& 0& 0\\
-{f^1_3} - {f^1_7}& 0& 0& 0& 0& 0&{f^1_1} + {f^1_4}&  {f^1_3}\\
{f^1_2} + {f^1_6}& 0& 0& 0& 0& 0&  {f^1_2}&{f^1_1} + {f^1_5}
\end{pmatrix}.
\eas
We have obtained the results predicted by the representation theory but in a completely elementary fashion.
This cleaning up of the Jacobi-matrix is not yet fully automated by us; it would be interesting to have an algorithm.
One potential problem is that trace decomposition generally allows for some choice, so there is no a priori knowledge
of the trace of the multipliers.
\begin{Remark}
One might, optimistically, think that such a nice diagonal result is always possible, but this result is so nice because of the 
characteristics of the example. If one makes slight changes to the starting equation, the result may not be as nice.
\end{Remark}

We recognize (in the notation of \cite[Example 4.9]{MR4183886})  the network multipliers
\bas
\bigwedge^1&=&\begin{pmatrix}{f^1_1} + {f^1_2} + {f^1_3} + {f^1_4} + {f^1_5} + {f^1_6} + {f^1_7} + {f^1_8}\end{pmatrix},\\
\bigwedge^2&=&\begin{pmatrix} f^1_1\end{pmatrix},\\
\bigwedge^3&=&\begin{pmatrix} f^1_1+f^1_4&f^1_3\\f^1_2&f^1_1+f^1_5\end{pmatrix},
\eas
with \(n_1=n_2=1,n_3=2\) and \(m_1=m_2=1,m_3=3\), with \(m_1n_1+m_2n_2+m_3n_3=8=N\) and \(n_1^2+n_2^2+n_3^2=6\leq 8=N\),
cf. \cite[Proposition 6.8]{MR4183886}.
Notice that 
\bas
\tr{\mathsf{J}}&=&{f^1_8} + {f^1_7} + {f^1_6} +3 {f^1_5} +3 {f^1_4} + {f^1_3} + 3{f^1_2} +8{f^1_1}
\\&=&m_1\tr{\bigwedge^1}+m_2\tr{\bigwedge^2}+m_3\tr{\bigwedge^3}.
\eas
and
\bas
|{\mathsf{J}}|&=&({f^1_8} + {f^1_7} + {f^1_6} + {f^1_5} + {f^1_4} + {f^1_3} + {f^1_2} + {f^1_1})f^1_1
\begin{vmatrix}{f^1_5} + {f^1_1}& - {f^1_2}\\
- {f^1_3}&{f^1_4} + {f^1_1}\end{vmatrix}^3
\\&=&|{\bigwedge^1}|^{m_1}|{\bigwedge^2}|^{m_2}|{\bigwedge^3}|^{m_3}.
\eas
We now leave the linear algebra and ask for all the nilpotents in this system. These have to satisfy
\bas
{\bigwedge^1}&=&\begin{vmatrix}{f^1_1} + {f^1_2} + {f^1_3} + {f^1_4} + {f^1_5} + {f^1_6} + {f^1_7} + {f^1_8}\end{vmatrix}=0,\\
{\bigwedge^2}&=&\begin{vmatrix}f^1_1\end{vmatrix}=0,\\
\tr{\bigwedge^3}&=&f^1_4+f^1_5+2f^1_1=0,\\
|{\bigwedge^3}|&=&\begin{vmatrix} f^1_1+f^1_4&f^1_2\\f^1_3&f^1_1+f^1_5\end{vmatrix}=0.
\eas
This leads to
\bas
f^1_1&=&0,\\
f^1_5&=&-f^1_4,\\
{f^1_8} &=&- {f^1_7} - {f^1_6}  - {f^1_3} - {f^1_2},\\
({f^1_4})^2+f^1_2f^1_3&=&0.
\eas
This reduces \(\bigwedge^3\) to
\bas
\begin{pmatrix} f^1_4&f^1_2\\f^1_3&-f^1_4\end{pmatrix}&\equiv &\begin{pmatrix} 0&1\\0&0\end{pmatrix},
\eas
using \(P=\begin{pmatrix} f^1_2 &0\\-f^1_4 &1 \end{pmatrix}\) as (assuming \(f^1_2\neq 0\), invertible; if \(f^1_2=0\), we assume \(f^1_3\neq 0\), similar computation; if both are zero, the multiplier itself is zero.) intertwiner,
since 
\bas
\begin{pmatrix} f^1_4&f^1_2\\f^1_3&-f^1_4\end{pmatrix}\begin{pmatrix} f^1_2 &0\\-f^1_4 &1 \end{pmatrix}
&=& \begin{pmatrix} 0&f^1_2\\0&-f^1_4\end{pmatrix}=
\begin{pmatrix} f^1_2 &0\\-f^1_4 &1 \end{pmatrix}\begin{pmatrix} 0&1\\0&0\end{pmatrix}.
\eas

\end{Example}

\usetikzlibrary{arrows, positioning, shapes.geometric}

\subsection{\(2\)D Coupled Oscillators with Two Masses}\label{sec:2osc}
The following analysis is meant  to illustrate the results of this paper,
not as the best way to study the equation, since the semigroupoid in this case is an abelian group, \(\mathbb{Z}/2\),
so Fourier analysis is much more effective.

Consider the following system:

\bas
m \ddot{x}_1 &=& -(k + \kappa)x_1 + \kappa x_2,
\\
m \ddot{x}_2 &=& -(k + \kappa)x_2 + \kappa x_1.
\eas
Equivalently,
\bas
\dot{x}_1&=&y_1,
\\
\dot{y}_1&=&\frac{1}{m}\left( -(k + \kappa)x_1 + \kappa x_2\right),
\\
\dot{x}_2&=&y_2,
\\
\dot{y}_2&=&\frac{1}{m}\left( -(k + \kappa)x_2 + \kappa x_1\right).
\eas
This system of equations describes the motion of two coupled oscillators, where $x_1$ and $x_2$ represent the one-dimensional displacements of two masses, and $m$ is the mass of each oscillator. The parameters $k$ and $\kappa$ represent the stiffness of the springs connecting the oscillators.  Set,
\bas
\dot{X}_1&=&
\begin{bmatrix}
\dot{x}_1 \\
\dot{y}_1
\end{bmatrix}
=
\begin{bmatrix}
0 & 1 \\
-\frac{k + \kappa}{m} &0
\end{bmatrix}
\begin{bmatrix}
x_1 \\
y_1
\end{bmatrix}+
\begin{bmatrix}
0 & 0 \\
\frac{\kappa}{m} & 0
\end{bmatrix}
\begin{bmatrix}
x_2 \\
y_2
\end{bmatrix}=M X_1+N X_2,
\eas
and 
\bas
\dot{X}_2=\begin{bmatrix}
\dot{x}_2 \\
\dot{y}_2
\end{bmatrix}
=
\begin{bmatrix}
0 & 1 \\
-\frac{k + \kappa}{m} &0
\end{bmatrix}
\begin{bmatrix}
x_2 \\
y_2
\end{bmatrix}+
\begin{bmatrix}
0 & 0 \\
\frac{\kappa}{m} & 0
\end{bmatrix}
\begin{bmatrix}
x_1 \\
y_1
\end{bmatrix}= M X_2+N X_1.
\eas
Now, we have the following system which is one color network system
\bas
\dot{X}_1&=&M X_1+N X_2,
\\
\dot{X}_2&=& M X_2+N X_1.
\eas
 \def\NN{2}

By applying Algorithm \ref{alg:xbasis} and Definition \ref{def:xy},
	the relation between the coordinates is given by
		\ba\label{eqn:2x1S}
			\begin {pmatrix}\vv{0}{1}\\\vv{1}{1}\\\VV{0}{1}\\\VV{1}{1}\end{pmatrix}&=&
			\exp(\xx{0}{1}\otimes 	\XX{1}{1}\triangleright)
			\begin {pmatrix}\xx{0}{1}\\\xx{1}{1}\\\XX{0}{1}\\\XX{1}{1}\end{pmatrix}
			=\begin {pmatrix}\xx{0}{1}\\\xx{1}{1}+\xx{0}{1}\\\XX{0}{1}-	\XX{1}{1}\\\XX{1}{1}\end{pmatrix}
			=\begin {pmatrix}\cx{1}\\\bx{1}+\cx{1}\\\cX{1}-	\bX{1}\\\bX{1}\end{pmatrix}.
		\ea

\bas
\dot{X}_1&=&f^1({X_1}, {X_2}),\\
\dot{X}_2&=&f^1({X_2}, {X_1}),\\
\eas
with picture
	\colorlet{co1}{black}
	\colorlet{co2}{orange}
	\colorlet{co3}{violet}
	\colorlet{co4}{blue}
	\colorlet{co5}{green}
\begin{center}
	\colorlet{co1}{black}
	\colorlet{co2}{orange}
	\colorlet{co3}{violet}
	\colorlet{co4}{blue}
	\colorlet{co5}{green}
	\colorlet{c1}{red}
	\colorlet{c2}{red}
	\begin{tikzpicture}
	\node (n1) at (0,0) {1};
	\node (n2) at (3,0) {2};
	\node (z1) at (3,0) {};
	\node (z2) at (6,0) {};
	\foreach \i in {0,...,15} 
		\foreach \x in {1,...,\NN} 
			\foreach \k in {x}	
				\node (n\x\k\i) at ($ (n\x)!0.23!360*\i/16:(z\x) $) {};
		\draw[co3,line width=1pt] (n1x14) to[out=0,in=180] node [sloped] {$>$} (n2x10);
		\draw[co3,line width=1pt] (n2x6) to[out=180,in=0] node [sloped] {$<$} (n1x2);
		\foreach \i in 
			{n1x14,n2x6}
				\fill[c1] (\i) circle [radius=4pt];
		\foreach \i in 
                {n1x7,n2x10,n1x2,n2x15}
\fill[c1!\int] (\i) circle [radius=4pt];
		\draw[co1,line width=1pt] (n1x7) to[loop left] node [sloped] {$$} ();
		\draw[co1,line width=1pt] (n2x15) to[loop right] node [sloped] {$$} ();
	\foreach \i in {1,...,\NN}{
		\fill[c\i!\int] (n\i) circle [radius=20pt];
		\draw[c\i!\int,line width=1pt]	(n\i) to[out=180,in=0] node [color=black] {$X_\i$}		(n\i);}
	\end{tikzpicture}
\end{center}	
The Jacobi-matrix is
\bas
J&=&\begin{pmatrix}
M& N\\
N &M
\end{pmatrix}.
\eas
The relation between the coordinates (using Algorithm \ref{alg:ff}) is given by
\bas
\begin{pmatrix}
X_1\\
X_2\\
\end{pmatrix}=\begin{pmatrix}
\vv01
\\
\vv11
\\
\end{pmatrix}=\begin{pmatrix}
\cu1\\
\bu1
\\
\end{pmatrix}.
\eas
Using Algorithm \ref{alg:xbasis} we obtain the Jacobi matrix
\bas
\mathsf{J}&=&\begin{pmatrix}
M + N& 0\\
N & M-N 
\end{pmatrix},
\eas
and the `multipliers' (Cf. Remark \ref{rem:vector}) are 
\bas
\bigwedge^1&=&\begin{pmatrix}M+N\end{pmatrix},\\
\bigwedge^2&=& \begin{pmatrix}M-N\end{pmatrix}.
\eas
The Jacobian is
\bas
|\mathsf{J}|&=&|\bigwedge^1|\cdot|\bigwedge^2|=|M+N|
| M-N |=-\frac{k}{m}\frac{k-2\kappa}{m}.
\eas
\section{Concluding remarks}\label{sec:conclusions}
The algorithmic approach presented in this paper is but a first step towards the classification of bifurcations of colored networks.
Obvious areas of further research are the computation of Jordan-Chevalley decompositions with many variables, the generalization of the Jacobson-Morozov construction
to the situation of colored networks, the construction of versal deformations of organizing centers, and the nonlinear network normal form description.

In our Examples, we see tantalizing connections with other approaches using representation theory.
The semigroup(oid) based approach to colored network theory that started with \cite{rink2015coupled} has, after a decade of research, opened up many interesting 
possibilities and connections between different approaches.
\bibliography{CN}
\end{document}